\documentclass[12pt,twoside]{amsart}
\usepackage{amsmath,amsthm,amsmath,amsrefs}

\usepackage{calc}
\usepackage{setspace}

\usepackage{amsbsy,amsmath,amsfonts,amsthm,amssymb,color}

% PACKAGES
%\usepackage{booktabs} % for much better looking tables
%\usepackage{array} % for better arrays (eg matrices) in maths
%\usepackage{paralist} % very flexible & customisable lists (eg. enumerate/itemize, etc.)
%\usepackage{verbatim} % adds environment for commenting out blocks of text & for better verbatim
%\usepackage{subfig} % make it possible to include more than one captioned figure/table in a single float
% These packages are all incorporated in the memoir class to one degree or another...

\addtolength{\voffset}{-1cm}
\addtolength{\textheight}{2cm}
\addtolength{\hoffset}{-1cm}
\addtolength{\textwidth}{2cm}

\theoremstyle{plain}
\newtheorem{mythe}{Theorem}[section]
\newtheorem{lem}[mythe]{Lemma}

\newtheorem{cor}[mythe]{Corollary}

\newtheorem{pro}[mythe]{Proposition}

\theoremstyle{definition}

\setlength{\parindent}{1cm}
\usepackage{mathrsfs}

\newcommand{\ee}{\varepsilon}

\newcommand{\bN}{\mathbb{N}}
\newcommand{\cT}{\mathcal{T}}

\newcommand{\cF}{\mathcal{F}}

\newcommand{\cS}{\mathcal{S}}
\newcommand{\cB}{\mathcal{B}}
\newcommand{\bT}{\mathbb{T}}
\newcommand{\cK}{\mathcal{K}}

\newcommand{\la}{\langle}
\newcommand{\ra}{\rangle}

\newcommand{\cX}{\mathcal{X}}
\newcommand{\cY}{\mathcal{Y}}
\newcommand{\cH}{\mathcal{H}}
\newcommand{\cA}{\mathcal{A}}

\newcommand{\what}{\widehat}
\newcommand{\bofh}{\cB(\cH)}

\newcommand{\cU}{\mathcal{U}}
\newcommand{\bC}{\mathbb{C}}
\newcommand{\bR}{\mathbb{R}}
\newcommand{\cG}{\mathcal{G}}
\newcommand{\cR}{\mathcal{R}}
\newcommand{\id}{\text{id}}

\newcommand{\bZ}{\mathbb{Z}}

\newcommand{\cJ}{\mathcal{J}}
\newcommand{\cV}{\mathcal{V}}

\newcommand{\tr}{\text{tr}}

\newcommand{\spn}{\text{span }}

\usepackage{tikz}
\usepackage{tikz-cd}

% BEGINNING OF OLDER HEADER

\newdimen\Squaresize \Squaresize=14pt
\newdimen\Thickness \Thickness=0.4pt

\def\Square#1{\hbox{\vrule width \Thickness
   \vbox to \Squaresize{\hrule height \Thickness\vss
      \hbox to \Squaresize{\hss#1\hss}
   \vss\hrule height\Thickness}
\unskip\vrule width \Thickness} \kern-\Thickness}

\def\Vsquare#1{\vbox{\Square{$#1$}}\kern-\Thickness}

% END OF OLDER HEADER

\title[Unitary Correlation Sets]{Unitary Correlation Sets}

\author{Samuel J. Harris}
\address{University of Waterloo \\
Department of Pure Mathematics \\
Waterloo, Ontario \\
Canada  N2L 3G1}
\email{sj2harri@uwaterloo.ca}
\author{Vern I. Paulsen}
\address{University of Waterloo \\
Department of Pure Mathematics \\
Waterloo, Ontario \\
Canada  N2L 3G1}
\email{vpaulsen@uwaterloo.ca}
\thanks{Both authors were supported in part by NSERC}

\subjclass{Primary 47L25; Secondary 46L99}

\keywords{Connes' Embedding Problem, Tsirelson's Problem, Unitary Correlation Sets, Cross-Norms, Operator System Tensor Products}

\begin{document}
\begin{abstract}
The unitary correlation sets defined by the first author in conjunction with tensor products of $\cU_{nc}(n)$ are further studied.  We show that Connes' embedding problem is equivalent to deciding whether or not two smaller versions of the unitary correlation sets are equal.  Moreover, we obtain the result that Connes' embedding problem is equivalent to deciding whether or not two cross norms on $M_n \otimes M_n$ are equal for all $n \geq 2$.
\end{abstract}

\maketitle

\section*{Introduction}

Connes' embedding problem \cite{connes} is one of the most important problems in the theory of operator algebras. The problem is the following: does every finite von Neumann algebra $M$ with separable predual embed into the ultrapower of the hyperfinite $II_1$ factor in a way that preserves the trace on $M$? This problem is equivalent to many other open problems in many areas of mathematics. One such area is quantum information theory, as demonstrated by recent results from \cite{fritz}, \cite{junge} and \cite{ozawa}, which show that the embedding problem is intimately related with one of Tsirelson's problems regarding quantum bipartite correlations. In particular, Connes' embedding problem has a positive answer if and only if the set $C_{qc}(n,m)$ of quantum bipartite correlations in the commuting model for $n$ inputs and $m$ outputs can be approximated by the set $C_q(n,m)$ of such correlations in the finite-dimensional tensor product model \cites{fritz,junge,ozawa}.

An analogous theory of unitary correlation sets was developed by the first author in \cite{harris}.  It was shown in \cite{harris} that Connes' embedding problem is equivalent to deciding whether the set $UC_{qc}(n,n)$ of unitary correlations in the commuting model is equal to the closure of the set $UC_q(n,n)$ of unitary correlations in the finite-dimensional tensor product model, for all $n \geq 2$.

Our main result is Theorem \ref{connescrossnorms}, which states that Connes' embedding problem is equivalent to deciding whether a certain compression $B_{qc}(n,n)$ of $UC_{qc}(n,n)$ is equal to the closure of the analogous compression $B_q(n,n)$ of $UC_q(n,n)$.  Moreover, certain cross norms on $M_n \otimes M_n$ arise from the sets $\overline{B_q(n,n)}$ and $B_{qc}(n,n)$, and it is shown that the embedding problem is equivalent to determining whether or not these cross norms are equal on $M_n \otimes M_n$, for all $n \geq 2$.  Drawing on techniques in \cite{CLP}, we show that $B_q(n,m) \neq B_{qc}(n,m)$ for all $n,m \geq 2$ and that $B_q(n,m)$ is not closed.  This result is one way in which the unitary correlation sets differ greatly from the quantum bipartite correlation sets.

In this paper, we draw on some results from operator system tensor theory and quotient theory, as well as recent work in quantum information theory regarding the embezzlement of entanglement of states.  In Section \S1, we review some results regarding operator system tensor products, quotients and coproducts that we will use.  In Section \S2, we give a brief introduction to the probabilistic correlation sets arising in Tsirelson's problems.  Section \S3 gives some properties of the smaller unitary correlation sets $B_q(n,m)$ and $B_{qc}(n,m)$, along with other related unitary correlation sets.  Moreover, the correspondence between these correlation sets and cross norms on $M_n \otimes M_m$ is given.  We relate Connes' embedding problem to determining whether or not $\overline{B_q(n,n)}=B_{qc}(n,n)$ in Section \S4.  Finally, in Section \S5, we use the theory of embezzling entanglement of states from \cite{CLP} and \cite{vandam} to demonstrate several separations between the various unitary correlation sets.

\section{Preliminaries}
The theory of operator systems has many connections to Connes' embedding problem.  In this section, we will give a brief introduction to certain aspects of the theory; namely, we will introduce duality, tensor products, quotients and coproducts.  First, suppose that $\cS$ is an operator system, and let $\cS^d$ be its Banach space dual.  The space $\cS^d$ can always be endowed with the structure of a matrix ordered space \cite[Lemma 4.2, Lemma 4.3]{choieffros}.  The involution on $\cS^d$ is given by $f^*(s)=\overline{f(s^*)}$ for each $f \in \cS^d$ and $s \in \cS$.  We say that an element $(f_{ij}) \in M_n(\cS^d)$ is positive provided that the map $F:\cS \to M_n$ given by $F(s)=(f_{ij}(s))$ is completely positive.  With these notions, $\cS^d$ is a matrix ordered space.  Moreover, if $\dim(\cS)<\infty$, then $\cS^d$ is an operator system with order unit given by a faithful state on $\cS$ \cite{choieffros}.  In this case, the canonical map $i:\cS \to \cS^{dd}$ is a complete order isomorphism.

Throughout this paper, we will be considering three tensor products: the minimal, the commuting, and the maximal tensor products.  First, we briefly summarize some of the theory of tensor products of operator systems from \cite{KPTT} and \cite{quotients} that we shall need.

Let $\mathcal{O}$ denote the category of operator systems with unital, completely positive maps as the morphisms.  An \textbf{operator system tensor product} is a map $\tau:\mathcal{O} \times \mathcal{O} \to \mathcal{O}$, $(\cS,\cT) \mapsto \cS \otimes_{\tau} \cT$, satisfying the following conditions:
\begin{enumerate}
\item
$\cS \otimes_{\tau} \cT$ has the structure of an operator system on the vector space $\cS \otimes \cT$, with adjoint given by $(s \otimes t)^*=s^* \otimes t^*$ and Archimedean matrix order unit given by $1_{\cS} \otimes 1_{\cT}$;
\item
If $X \in M_p(\cS)_+$ and $Y \in M_q(\cT)_+$, then $X \otimes Y:=(X_{ij} \otimes Y_{k\ell})_{(i,j),(k,\ell)}$ is in $M_{pq}(\cS \otimes_{\tau} \cT)_+$; and
\item
If $\varphi:\cS \to M_p$ and $\psi:\cT \to M_q$ are ucp maps, then $\varphi \otimes \psi:\cS \otimes_{\tau} \cT \to M_{pq}$ is ucp.
\end{enumerate}
An operator system tensor product $\tau$ is said to be \textbf{symmetric} if, for every $(\cS,\cT) \in \mathcal{O}$, the canonical map $s \otimes t \mapsto t \otimes s$ extends to a complete order isomorphism from $\cS \otimes_{\tau} \cT$ onto $\cT \otimes_{\tau} \cS$.  An operator system tensor product $\tau$ is said to be \textbf{functorial} if it satisfies the following property:
\begin{itemize}
\item
If $\cS_1,\cS_2,\cT_1$ and $\cT_2$ are operator systems and $\varphi:\cS_1 \to \cT_1$ and $\psi:\cS_2 \to \cT_2$ are ucp maps, then $\varphi \otimes \psi:\cS_1 \otimes_{\tau} \cS_2 \to \cT_1 \otimes_{\tau} \cT_2$ is ucp.
\end{itemize}

Following \cite{KPTT}, we now define the minimal, commuting and maximal tensor products of operator systems.  Suppose that $\cS$ and $\cT$ are operator systems, and let $\iota:\cS \to \bofh$ and $\kappa:\cT \to \cB(\cK)$ be complete order embeddings, where $\cH$ and $\cK$ are Hilbert spaces.  The \textbf{minimal tensor product} of $\cS$ and $\cT$, denoted by $\cS \otimes_{\min} \cT$, is the operator system arising from the inclusion $(\iota \otimes \kappa)(\cS \otimes \cT) \subseteq \cB(\cH \otimes \cK)$.  Equivalently, an element $X \in M_n(\cS \otimes_{\min} \cT)_{sa}$ is positive if and only if for every pair of ucp maps $\varphi:\cS \to M_p$ and $\psi:\cT \to M_q$, we have $(\varphi \otimes \psi)^{(n)}(X) \in M_{npq}^+$.  In particular, the operator system $\cS \otimes_{\min} \cT$ is independent of the choice of Hilbert spaces $\cH$ and $\cK$, and independent of the complete order embeddings $\iota$ and $\kappa$ \cite[Theorem 4.4]{KPTT}.

Given two linear maps $\varphi:\cS \to \bofh$ and $\psi:\cT \to \bofh$, we will let $\varphi \cdot \psi:\cS \otimes \cT \to \bofh$ be the product map defined on simple tensors by $(\varphi \cdot \psi)(s \otimes t)=\varphi(s)\psi(t)$.  With this notion in hand, we define the \textbf{commuting tensor product} of $\cS$ and $\cT$ to be the operator system $\cS \otimes_c \cT$ such that $X \in M_n(\cS \otimes_c \cT)_{sa}$ is positive if and only if $(\varphi \cdot \psi)^{(n)}(X) \in M_n(\bofh)_+$ for every pair of ucp maps $\varphi:\cS \to \bofh$ and $\psi:\cT \to \bofh$ with commuting ranges.

Finally, the \textbf{maximal tensor product} of $\cS$ and $\cT$ is defined as the operator system $\cS \otimes_{\max} \cT$ such that $X \in M_n(\cS \otimes_{\max} \cT)_{sa}$ is positive if and only if for every $\ee>0$, there are $S_{\ee} \in M_p(\cS)_+$, $T_{\ee} \in M_q(\cT)_+$ and a linear map $A_{\ee}:\bC^n \to \bC^p \otimes \bC^q$ such that $$X+\ee I_n=A_{\ee}^*(S_{\ee} \otimes T_{\ee})A_{\ee}.$$

Finally, if $\alpha$ and $\beta$ are operator system tensor products, then we write $\alpha \leq \beta$ if for every pair of operator systems $\cS$ and $\cT$, the identity map $\text{id}:\cS \otimes_{\beta} \cT \to \cS \otimes_{\alpha} \cT$ is completely positive.  Each of $\min,c,\max$ are symmetric functorial operator system tensor products \cite{KPTT}.  It is also shown in \cite{KPTT} that $$\min \leq c \leq \max.$$

Before introducing coproducts of operator systems, it is helpful to consider the theory of operator system quotients.  Suppose that $\varphi:\cS \to \cT$ is a surjective ucp map between operator systems, and let $\cJ=\ker(\varphi)$.  We can endow the quotient vector space $\cS/\cJ$ with an operator system structure.  We define an involution on $\cS/\cJ$ as $(\dot{x})^*=\dot{(x^*)}$ for each $\dot{x} \in \cS/\cJ$.  We let $$D_n(\cS,\cJ)=\{ \dot{X} \in M_n(\cS/\cJ)_{sa}: X+K \in M_n(\cS)_+ \text{ for some } K \in M_n(\cS)_{sa}\}.$$
Finally, we define the set of positive elements of $M_n(\cS/\cJ)$ to be the set $$C_n(\cS,\cJ)=\{ \dot{X} \in M_n(\cS/\cJ)_{sa}: \dot{X}+\ee \dot{I_n} \in D_n(\cS,\cJ), \, \forall \ee>0\}.$$
Then by \cite{quotients}, $\cS/\cJ$ is an operator system with order unit $\dot{1}$.

Given an operator system $\cS$, a surjective ucp map $\varphi:\cS \to \cT$ and a kernel $\cJ=\ker(\varphi)$ as above, we will say that $\cJ$ is \textbf{completely order proximinal} provided that $D_n(\cS,\cJ)=C_n(\cS,\cJ)$ for all $n \in \bN$.  While the notion of a first isomorphism theorem fails in general for operator systems, the following weaker version still holds.

\begin{pro}
\emph{(Kavruk-Paulsen-Todorov-Tomforde, \cite{quotients})}
\label{firstiso}
If $\varphi:\cS \to \cT$ is a ucp map and $\cJ=\ker(\varphi)$, then the induced map $\dot{\varphi}:\cS/\cJ \to \cT$ given by $\dot{\varphi}(\dot{x})=\varphi(x)$ is ucp.
\end{pro}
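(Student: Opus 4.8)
The plan is to reduce the statement to the universal property that defines the quotient operator system structure. We are given a ucp map $\varphi:\cS\to\cT$ with kernel $\cJ=\ker(\varphi)$, and we must show that the well-defined linear map $\dot\varphi:\cS/\cJ\to\cT$, $\dot\varphi(\dot x)=\varphi(x)$, is ucp. Unitality is immediate, since $\dot\varphi(\dot 1_\cS)=\varphi(1_\cS)=1_\cT$. The content is complete positivity, so fix $n\in\bN$ and $\dot X\in M_n(\cS/\cJ)_+=C_n(\cS,\cJ)$; I must show $\dot\varphi^{(n)}(\dot X)=\varphi^{(n)}(X)\in M_n(\cT)_+$.

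First I would handle the easier layer, namely $D_n(\cS,\cJ)$. If $\dot X\in D_n(\cS,\cJ)$, then by definition there is some $K\in M_n(\cJ)_{sa}$ (note $K$ has entries in $\cJ$, since $\dot X+\dot K=\dot X$ forces $\dot K=0$) with $X+K\in M_n(\cS)_+$. Applying the ucp map $\varphi^{(n)}$ gives $\varphi^{(n)}(X+K)=\varphi^{(n)}(X)\in M_n(\cT)_+$, using $\varphi^{(n)}(K)=0$ because $K$ has entries in $\cJ=\ker\varphi$. So $\dot\varphi^{(n)}$ maps $D_n(\cS,\cJ)$ into $M_n(\cT)_+$. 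Next I would pass from $D_n$ to $C_n$ using the Archimedean closure. If $\dot X\in C_n(\cS,\cJ)$, then $\dot X+\ee\dot I_n\in D_n(\cS,\cJ)$ for every $\ee>0$, so by the previous step $\varphi^{(n)}(X)+\ee I_n=\varphi^{(n)}(X+\ee I_n)\in M_n(\cT)_+$ for all $\ee>0$. Since $M_n(\cT)_+$ is Archimedean closed (the order unit $1_\cT$ is Archimedean), letting $\ee\to 0$ yields $\varphi^{(n)}(X)\in M_n(\cT)_+$, which is exactly what we need. As $n$ was arbitrary, $\dot\varphi$ is completely positive, hence ucp.

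The only genuinely delicate point is bookkeeping around the kernel: one should verify that the $K$ appearing in the definition of $D_n(\cS,\cJ)$ may be taken with entries in $\cJ$ rather than merely in $\cS_{sa}$, so that $\varphi^{(n)}(K)=0$. This follows because $\dot X$ and $X+K$ having the same image in $M_n(\cS/\cJ)$ forces every entry of $K$ to lie in $\cJ$; combined with $K=K^*$ and the fact that $\cJ$ is a self-adjoint subspace, $K\in M_n(\cJ)_{sa}$ as claimed. Everything else is a formal application of complete positivity of $\varphi$ and Archimedean closedness of the cone in $\cT$; no subtlety about whether $\cS/\cJ$ is itself an operator system is needed here, since that is provided by the cited result of \cite{quotients}, and indeed this proposition is the natural companion statement showing that the quotient has the expected mapping-out property.
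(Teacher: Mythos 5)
Your proof is correct and follows the standard argument for this fact; the paper itself cites the proposition from \cite{quotients} without reproducing a proof, and your route (positivity on $D_n(\cS,\cJ)$ via $\varphi^{(n)}(K)=0$, then the Archimedean property of $M_n(\cT)_+$ to pass from $D_n$ to $C_n$) is exactly the expected one. You are also right to read the $K$ in the definition of $D_n(\cS,\cJ)$ as having entries in $\cJ$ (the displayed $M_n(\cS)_{sa}$ in the paper is a typo for $M_n(\cJ)_{sa}$), which is precisely what makes $\varphi^{(n)}(K)=0$ available.
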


In general, given a surjective ucp map $\varphi:\cS \to \cT$, we will say that $\varphi$ is a \textbf{complete quotient map} if $\dot{\varphi}:\cS/\ker(\varphi) \to \cT$ defined as above is a complete order isomorphism.  We have the following relation between complete quotient maps and complete order injections:

\begin{mythe}
\emph{(Farenick-Paulsen, \cite{FP})}
\label{quotientadjoint}
Let $\varphi:\cS \to \cT$ be a surjective ucp map between finite-dimensional operator systems.  Then $\varphi$ is a complete quotient map if and only if the adjoint mapping $\varphi^d:\cT^d \to \cS^d$ given by $[\varphi^d(f)](s)=f(\varphi(s))$ for all $f \in \cT^d$ and $s \in \cS$ is a complete order embedding.
\end{mythe}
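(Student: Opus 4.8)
The plan is to prove both implications by using duality and the fact that for finite-dimensional operator systems the canonical map $\cS \to \cS^{dd}$ is a complete order isomorphism (as recalled in \S1). Throughout I write $F = \dot{\varphi}: \cS/\cJ \to \cT$ for the induced ucp map of Proposition \ref{firstiso}, where $\cJ = \ker(\varphi)$; note $\varphi$ factors as $\cS \xrightarrow{q} \cS/\cJ \xrightarrow{F} \cT$ with $q$ the quotient map, and correspondingly $\varphi^d = q^d \circ F^d$. Since $q^d : (\cS/\cJ)^d \to \cS^d$ is always a complete order embedding (the positive cone of $(\cS/\cJ)^d$ is by definition the pullback along $q$ of the functionals on $\cS$ that annihilate $\cJ$ and are positive, i.e.\ $(\cS/\cJ)^d$ is completely order isomorphic to the annihilator $\cJ^\perp \subseteq \cS^d$ with the inherited matrix order), the composite $\varphi^d$ is a complete order embedding if and only if $F^d : \cT^d \to (\cS/\cJ)^d$ is. Hence it suffices to show: $F$ is a complete order isomorphism if and only if $F^d$ is a complete order embedding.

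For the forward direction, suppose $\varphi$ is a complete quotient map, so $F$ is a complete order isomorphism; then $F^d$ is a complete order isomorphism onto its range and in particular a complete order embedding. This direction is essentially formal. The substantive content is the converse. Assume $F^d : \cT^d \to (\cS/\cJ)^d$ is a complete order embedding. We already know $F$ is ucp and bijective as a linear map (it is injective since $\cJ = \ker\varphi$ and surjective since $\varphi$ is), so we only need that $F^{-1}$ is completely positive, i.e.\ that $F$ is a complete order isomorphism. The strategy is to pass to second duals: apply the hypothesis to get that $F^{dd} : (\cS/\cJ)^{dd} \to \cT^{dd}$ is a complete quotient map. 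Here I would invoke the ``dual'' half of the Farenick--Paulsen-type dictionary --- namely, that the adjoint of a complete order embedding between finite-dimensional operator systems is a complete quotient map --- which I would either cite from \cite{FP} or prove directly from the definitions of $C_n(\cdot,\cdot)$ and $D_n(\cdot,\cdot)$ together with a Hahn--Banach separation argument in the finite-dimensional matrix ordered dual. Concretely: if $F^d$ is a complete order embedding then a positive element of the quotient $(\cS/\cJ)^{dd}/\ker(F^{dd})$ lifts, after adding $\ee \dot I$, to a positive element of $(\cS/\cJ)^{dd}$, because any obstruction would be witnessed by a state in $\cT^{dd}$ (i.e.\ an element of $\cT^d$) that fails to be carried to a positive functional on $\cS/\cJ$, contradicting that $F^d$ preserves and reflects positivity at all matrix levels.

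Finally, I would close the loop using finite-dimensionality twice: $\ker(F) = 0$ forces $\ker(F^{dd}) = 0$, so $F^{dd}$ being a complete quotient map means $F^{dd}$ is itself a complete order isomorphism; and under the canonical complete order isomorphisms $\cS/\cJ \cong (\cS/\cJ)^{dd}$ and $\cT \cong \cT^{dd}$, the map $F^{dd}$ is identified with $F$. Therefore $F$ is a complete order isomorphism, i.e.\ $\varphi$ is a complete quotient map, completing the proof. The main obstacle is the middle step: establishing cleanly that ``adjoint of a complete order embedding is a complete quotient map'' in the finite-dimensional setting. This requires carefully unwinding the definition of the quotient cones $C_n(\cS,\cJ)$ (with the Archimedeanization via $\ee \dot I_n$) and matching the ``$X + K \in M_n(\cS)_+$'' clause against a separation argument in $M_n(\cS^d)$; the $\ee$-room in the definition of $C_n$ is exactly what makes the Hahn--Banach step go through, and getting the quantifiers on $\ee$ and on the matrix level $n$ in the right order is the delicate point.
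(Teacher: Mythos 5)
The paper does not actually prove this theorem --- it is imported verbatim from \cite{FP} --- so there is no in-paper argument to match; I am judging your proof on its own terms. Your reduction is sound: writing $\varphi = F\circ q$ with $q:\cS\to\cS/\cJ$ the quotient map and $F=\dot\varphi$, the identity $\varphi^d=q^d\circ F^d$ together with the fact that $q^d$ is a complete order embedding (this is not quite ``by definition'' --- it is the universal property of the operator system quotient, i.e.\ the cp-into-$M_n$ version of Proposition \ref{firstiso}: a cp map annihilating $\cJ$ induces a cp map on $\cS/\cJ$) correctly reduces the theorem to ``$F$ is a complete order isomorphism iff $F^d$ is a complete order embedding,'' and the forward direction is formal, as you say.

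The gap is in your converse. You rest it on the companion lemma ``the adjoint of a complete order embedding between finite-dimensional operator systems is a complete quotient map,'' which you propose either to cite from \cite{FP} (which outsources the substantive half of the very duality being proved) or to establish by a Hahn--Banach argument that you only gesture at; as written that sketch does not work: the obstruction to lifting a positive element of $M_n(\cT^{dd})$ lives in $M_n((\cS/\cJ)^d)$, and mere preservation and reflection of positivity by $F^d$ does not by itself produce the required lifting of cp maps $\cT^d\to M_n$ along $F^d$ --- one needs an Arveson-type extension through the operator subsystem $F^d(\cT^d)\subseteq(\cS/\cJ)^d$, which your sketch never invokes. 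More to the point, the lemma is unnecessary in this situation. Since $\varphi$ is surjective and $\cJ=\ker\varphi$, the map $F$ is a linear bijection, so in finite dimensions $F^d$ is a linear bijection as well; a bijective complete order embedding is automatically a complete order isomorphism (apply positivity-reflection to $(F^d)^{(n)}\bigl(((F^d)^{-1})^{(n)}(Y)\bigr)=Y$), hence $(F^{-1})^d=(F^d)^{-1}$ is cp, hence $(F^{-1})^{dd}$ is cp, and the canonical complete order isomorphisms $\cS/\cJ\cong(\cS/\cJ)^{dd}$ and $\cT\cong\cT^{dd}$ (valid by finite-dimensionality, as recalled in \S1) identify $(F^{-1})^{dd}$ with $F^{-1}$. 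Thus $F$ is a complete order isomorphism and $\varphi$ is a complete quotient map. With this replacement your argument closes completely, and the ``delicate point'' about quantifiers on $\ee$ and the matrix level that you flag at the end simply does not arise.
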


We will also consider coproducts of operator systems.  These are akin to free products of $C^{\ast}$-algebras amalgamated over the unit.  For simplicity, we will consider the coproduct of finitely many operator systems $\cS_1,...,\cS_n$ with $n \geq 2$.  More information can be found in \cite{FKPTwep}.  Let $\cS_1,...,\cS_n$ be operator systems, and let $e_i$ denote the order unit of $\cS_i$.  The \textbf{coproduct} of $\cS_1,...,\cS_n$ is an operator system $\bigoplus_1 \{S_i\}_{i=1}^n$, together with unital complete order embeddings $\kappa_i:\cS_i \to \bigoplus_1 \{S_i\}_{i=1}^n$, which satisfies the following universal property: for any operator system $\cR$ and any collection of ucp maps $\varphi_i:\cS_i \to \cR$ for $1 \leq i \leq n$, there is a unique ucp map $\varphi:\bigoplus_1 \{S_i\}_{i=1}^n \to \cR$ such that $\varphi(\kappa_i(s_i))=\varphi_i(s_i)$ for all $s_i \in \cS_i$ and $1 \leq i \leq n$.

The coproduct $\bigoplus_1 \{S_i\}_{i=1}^n$ can always be realized as a complete quotient of the direct sum $\bigoplus_{i=1}^n \cS_i$ of the operator systems $\cS_1,...,\cS_n$.  Let $\cS=\bigoplus_{i=1}^n \cS_i$ be the direct sum of $\cS_1,...,\cS_n$.  This operator system has order unit $(e_1,...,e_n)$.  We let $$\cJ=\left\{ (x_i)_{i=1}^n \in \cS: x_i=\lambda_i e_i, \, \lambda_i \in \bC, \, \sum_{i=1}^n \lambda_i=0\right\}.$$

Note that an element in $\cS$ is positive if and only if each coordinate is positive.  Therefore, $\cJ$ has no positive elements except $0$.  It follows by \cite[Proposition 2.4]{kavruk} that $\cJ$ is a completely order proximinal kernel of a ucp map.  Hence, we may form the quotient operator system $\cS/\cJ$.

\begin{mythe}
\emph{(Farenick-Kavruk-Paulsen-Todorov, \cite{FKPTwep})}
\label{coproduct}
Let $\cS_1,...,\cS_n$ be operator systems, let $\cS=\bigoplus_{i=1}^n \cS_i$ and let $\cJ$ be defined as above.  Then $\bigoplus_1 \{\cS_i\}_{i=1}^n$ is completely order isomorphic to $\cS/\cJ$.
\end{mythe}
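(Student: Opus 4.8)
The plan is to exhibit unital completely positive maps $\kappa_i:\cS_i\to\cS/\cJ$ making $\cS/\cJ$ satisfy the universal property that defines the coproduct; since an operator system equipped with maps having that universal property is unique up to a canonical complete order isomorphism, this identifies $\cS/\cJ$ with $\bigoplus_1\{\cS_i\}_{i=1}^n$. Write $q:\cS\to\cS/\cJ$ for the quotient map and, for $s_i\in\cS_i$, let $\iota_i(s_i)\in\cS$ denote the tuple with $s_i$ in coordinate $i$ and $0$ elsewhere. The computation that drives everything is that $q(\iota_i(e_i))=\tfrac1n\dot 1$ in $\cS/\cJ$: the tuple $\iota_i(e_i)-\tfrac1n(e_1,\dots,e_n)$ has $j$-th entry a scalar multiple of $e_j$ for every $j$ (the scalar being $\tfrac{n-1}{n}$ for $j=i$ and $-\tfrac1n$ for $j\neq i$), and these $n$ scalars sum to $0$, so that tuple lies in $\cJ$. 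I would therefore set $\kappa_i(s_i):=n\,q(\iota_i(s_i))$. Then $\kappa_i$ is linear, $\kappa_i(e_i)=n\cdot\tfrac1n\dot 1=\dot 1$, and $\kappa_i$ is completely positive, being the composite of the completely positive coordinate embedding $\iota_i$, the quotient map $q$, and multiplication by the positive scalar $n$; it is injective because $\iota_i(s_i)\in\cJ$ forces $s_i=0$.

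To verify the universal property, let $\cR$ be an operator system and $\varphi_i:\cS_i\to\cR$ ucp maps, and define $\Phi:\cS\to\cR$ by $\Phi((s_i)_{i=1}^n)=\tfrac1n\sum_{i=1}^n\varphi_i(s_i)$, i.e.\ $\Phi=\tfrac1n\sum_i\varphi_i\circ p_i$ with $p_i:\cS\to\cS_i$ the $i$-th coordinate projection. Each $\varphi_i\circ p_i$ is ucp, so $\Phi$ is completely positive, and $\Phi((e_i)_i)=\tfrac1n\cdot n\cdot 1_\cR=1_\cR$, so $\Phi$ is ucp; moreover $\Phi$ annihilates $\cJ$ since $\Phi((\lambda_ie_i)_i)=\tfrac1n(\sum_i\lambda_i)1_\cR=0$ whenever $\sum_i\lambda_i=0$. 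By the universal property of the operator system quotient \cite{quotients} — of which Proposition \ref{firstiso} is the special case $\cJ=\ker\Phi$ — the map $\Phi$ factors as $\Phi=\dot\Phi\circ q$ for a ucp map $\dot\Phi:\cS/\cJ\to\cR$, and $\dot\Phi(\kappa_i(s_i))=n\,\Phi(\iota_i(s_i))=\varphi_i(s_i)$. Uniqueness of $\dot\Phi$ is automatic because the subspaces $\kappa_i(\cS_i)=q(\iota_i(\cS_i))$ jointly span $\cS/\cJ$ (as $\cS=\sum_i\iota_i(\cS_i)$ and $q$ is onto). Hence $\cS/\cJ$ has the universal property of the coproduct, so it is completely order isomorphic to $\bigoplus_1\{\cS_i\}_{i=1}^n$, and under this isomorphism $\kappa_i$ corresponds to the canonical embedding into the coproduct; in particular each $\kappa_i$ is automatically a complete order embedding.

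If one prefers to see directly that $\kappa_i$ is a complete order embedding, I would build a ucp left inverse: choose states $\omega_j$ on $\cS_j$ for $j\neq i$ and define $R_i:\cS\to\cS_i$ by $R_i((s_k)_k)=\tfrac1n s_i+\tfrac1n\sum_{j\neq i}\omega_j(s_j)e_i$. This $R_i$ is ucp and kills $\cJ$, so it descends to a ucp map $\rho_i:\cS/\cJ\to\cS_i$ with $\rho_i\circ\kappa_i=\id_{\cS_i}$; together with complete positivity of $\kappa_i$, this forces $\kappa_i$ to be a complete order embedding.

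The only point requiring genuine care is that $\Phi$ descends to a map on the quotient that is \emph{completely positive}, not merely positive: this is exactly the content of the operator system quotient construction of \cite{quotients}, where one uses that $\cJ$ is a kernel of a ucp map (in fact a completely order proximinal one, as noted above) so that $\cS/\cJ$ is a bona fide operator system rather than merely a matrix-ordered space. The conceptual key is simply the normalizing factor of $n$ in the definition of $\kappa_i$, forced by $q(\iota_i(e_i))=\tfrac1n\dot 1$; everything else is routine verification of complete positivity of explicit maps.
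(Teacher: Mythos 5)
Your proof is correct. The paper itself states this theorem as a citation to \cite{FKPTwep} and gives no proof, so there is nothing internal to compare against; your argument is the standard one, namely verifying that $\cS/\cJ$, equipped with the maps $\kappa_i(s_i)=n\,q(\iota_i(s_i))$, has the universal property of the coproduct. The computation $q(\iota_i(e_i))=\tfrac1n\dot 1$, the definition $\Phi=\tfrac1n\sum_i\varphi_i\circ p_i$, and the verification that $\Phi$ is ucp and annihilates $\cJ$ are all right, and you correctly flag the one point where the paper's stated Proposition \ref{firstiso} does not literally suffice: you need the general universal property of operator system quotients (a ucp map vanishing on a kernel $\cJ\subseteq\ker\Phi$ descends to a ucp map on $\cS/\cJ$), which is in \cite{quotients} and follows from a one-line Archimedean argument using $\Phi^{(k)}(K)=0$ for $K\in M_k(\cJ)$. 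Two further remarks. First, your uniqueness-of-universal-objects argument presupposes that the coproduct exists with unital complete order embeddings as structure maps; if one reads the theorem as also establishing existence, that route is circular, but your third paragraph repairs this completely: the ucp left inverse $\rho_i$ built from states $\omega_j$ on the $\cS_j$ (which exist since every operator system has states) shows directly that each $\kappa_i$ is a unital complete order embedding, so $\cS/\cJ$ itself realizes the coproduct with no appeal to a previously constructed model. Second, it is worth saying explicitly (as the paper does just before the theorem) that $\cJ$ contains no nonzero positives and hence is a completely order proximinal kernel by \cite{kavruk}, so that $\cS/\cJ$ is a genuine operator system; you allude to this, and with that noted the argument is complete.
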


%%%%%%%%%%%%%%%%%%%%%%%
%%%%%%%%%%%%%%%%
%%%%%%%%%%%%%
\section{Probabilistic Correlation Sets}

Before examining the unitary correlation sets from \cite{harris} in detail, it is helpful to consider the correlation sets arising in Tsirelson's problems as a comparison.  We give a brief introduction to these quantum bipartite correlation sets below; see \cites{fritz, junge, tsirelson93} for more information.

We recall that a \textbf{projection valued measure} with $m$ outputs is a collection of projections $\{P_i\}_{i=1}^m$ on a Hilbert space $\cH$ such that $\sum_{i=1}^m P_i=I_{\cH}$.  The set of \textbf{quantum correlations} in $n$ inputs and $m$ outputs, denoted by $C_q(n,m)$, is defined as the set of all coordinates of the form $$(\la E_{a,x} \otimes F_{b,y} \xi,\xi \ra)_{a,x,b,y},$$
where $\{E_{a,x}\}_{a=1}^m$ is a PVM on a finite-dimensional Hilbert space $\cH_A$ for each $1 \leq x \leq n$, $\{F_{b,y}\}_{b=1}^m$ is a PVM on a finite-dimensional Hilbert space $\cH_B$ for each $1 \leq y \leq n$, and $\xi \in \cH_A \otimes \cH_B$ is a unit vector.  The set of \textbf{quantum spatial correlations} $C_{qs}(n,m)$ is defined in the same manner, except that we no longer assume that $\cH_A$ and $\cH_B$ are finite-dimensional.

The set of \textbf{quantum commuting correlations} in $n$ inputs and $m$ inputs, denoted by $C_{qc}(n,m)$, is the set of all coordinates of the form $$(\la E_{a,x}F_{b,y}\xi,\xi \ra)_{a,b,x,y},$$ where for each $1 \leq x,y \leq n$, the collections $\{E_{a,x}\}_{a=1}^m$ and $\{F_{b,y}\}_{b=1}^m$ are PVM's on the same Hilbert space $\cH$, $\xi \in \cH$ is a unit vector, and $E_{a,x}$ commutes with $F_{b,y}$ for all $a,b,x,y$.  For convenience, we will also let $C_{qa}(n,m)=\overline{C_q(n,m)}$.

\begin{mythe}
\emph{(Ozawa, \cite{ozawa})}
The following are equivalent.
\begin{enumerate}
\item
Connes' embedding problem has a positive answer.
\item
$C_{qa}(n,m)=C_{qc}(n,m)$ for all $n,m \geq 2$.
\item
$C_{qa}(n,m)=C_{qc}(n,m)$ for a fixed $m \geq 2$ and all $n \geq 2$.
\end{enumerate}
\end{mythe}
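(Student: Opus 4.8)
The plan is to translate the correlation sets into the matrix‑ordered structure of certain operator system tensor products and then invoke Kirchberg's reformulation of Connes' embedding problem. For fixed $n,m$, let $\cS_{n,m}$ be the operator system generated by symbols $\{e_{a,x}: 1 \le a \le m,\, 1 \le x \le n\}$ subject only to $e_{a,x} \ge 0$ and $\sum_a e_{a,x} = 1$ for each $x$; concretely, $\cS_{n,m}$ is the span of the spectral projections of the canonical generators inside $\mathfrak{A}_{n,m} := C^{\ast}(\bZ_m^{\ast n})$, the full group $C^{\ast}$-algebra of the free product of $n$ copies of $\bZ_m$. A point of $C_{qc}(n,m)$, i.e.\ two families of PVMs on a common Hilbert space with $E_{a,x}$ commuting with $F_{b,y}$, is precisely the data of a ucp map out of $\cS_{n,m} \otimes_c \cS_{n,m}$; conversely, using Arveson's extension theorem together with the definition of $\otimes_c$ recalled in \S1, every state on $\cS_{n,m} \otimes_c \cS_{n,m}$ arises this way. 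Hence $C_{qc}(n,m)$ is the image of the weak-$*$ compact state space of $\cS_{n,m} \otimes_c \cS_{n,m}$ under $\phi \mapsto (\phi(e_{a,x} \otimes e_{b,y}))_{a,b,x,y}$. Similarly, $C_q(n,m)$ is the image of those states on $\cS_{n,m} \otimes_{\min} \cS_{n,m}$ coming from finite-dimensional representations; since $\bZ_m^{\ast n}$ is a free product of finite groups, $\mathfrak{A}_{n,m}$ is residually finite-dimensional, so these states are weak-$*$ dense among all states, and therefore $C_{qa}(n,m) = \overline{C_q(n,m)}$ equals the (automatically closed) image of the full state space of $\cS_{n,m} \otimes_{\min} \cS_{n,m}$.

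The second step is to upgrade equality of the correlation sets to equality of the operator systems. If $\cS_{n,m} \otimes_{\min} \cS_{n,m} = \cS_{n,m} \otimes_c \cS_{n,m}$ as operator systems for all $n,m$, then the state spaces coincide, hence so do the correlation sets. For the converse I would argue by duality: a point of $C_{qc}(n,m)$ not lying in the closed convex set $C_{qa}(n,m)$ can be strictly separated by a functional, and, passing to the dual operator systems and using the correspondence between complete quotient maps and complete order embeddings (Theorem \ref{quotientadjoint}), this functional yields an element of some $M_k(\cS_{n,m} \otimes_c \cS_{n,m})_+$ outside $M_k(\cS_{n,m} \otimes_{\min} \cS_{n,m})_+$. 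Handling all matrix levels uses that $M_k$-amplifications of $\cS_{n,m}$ can be absorbed into a larger system $\cS_{n',m'}$ by a block-diagonal device, so that quantifying over all $n,m$ recovers complete order isomorphism for each $n,m$. The equivalence of (2) and (3) follows from the standard simulation of an $m$-outcome measurement by binary measurements at the cost of more inputs, which is routine.

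The third step identifies the operator system equality for all $n,m$ with a positive answer to Connes' embedding problem, using as a black box Kirchberg's theorem that the latter is equivalent to $C^{\ast}(\mathbb{F}) \otimes_{\min} C^{\ast}(\mathbb{F}) = C^{\ast}(\mathbb{F}) \otimes_{\max} C^{\ast}(\mathbb{F})$ for a nonabelian free group $\mathbb{F}$. Each $\mathfrak{A}_{n,m}$ is a full free product of the nuclear $C^{\ast}$-algebras $\mathbb{C}^m$, hence has the local lifting property; together with residual finite-dimensionality, this allows one to transfer the tensorial equality down to $\cS_{n,m} \otimes_{\min} \cS_{n,m} = \cS_{n,m} \otimes_c \cS_{n,m}$ once $C^{\ast}(\mathbb{F})$ has the weak expectation property. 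Conversely, the equality for all $n,m$ forces $\mathfrak{A}_{n,m} \otimes_{\min} \mathfrak{A}_{n,m} = \mathfrak{A}_{n,m} \otimes_{\max} \mathfrak{A}_{n,m}$, and since any separable tracial von Neumann algebra is generated by a sequence of finite-order unitaries — so a trace on it is an inductive limit of traces on the $\mathfrak{A}_{n,m}$ — the approximation of tracial points of $C_{qc}$ by those of $C_q$ shows every finite von Neumann algebra with separable predual embeds in $R^{\omega}$.

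I expect the main obstacle to be the second step: converting equality of the scalar correlation sets into a complete order isomorphism, where one must confirm that the minimal and commuting operator system structures are detected by the correlations at \emph{every} matrix level, and where the Hahn--Banach separation must be funneled, through the duality apparatus of \S1, into a functional whose negativity is visible on data of the prescribed form $(\phi(e_{a,x} \otimes e_{b,y}))$. A related subtlety in the third step is that the finite-dimensional approximants provided by $C_{qa} = C_{qc}$ need not a priori be of the tracial (synchronous) type; one must either symmetrize them or show the synchronous correlations are dense, which is where the bulk of the analytic work in the cited papers lies.
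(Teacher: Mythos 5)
You should note at the outset that the paper does not prove this theorem: it is quoted from \cite{ozawa} (see also \cites{fritz,junge}) as background, so your argument has to stand on its own. Your first step is sound and is exactly the translation recalled in Section 2: $C_{qc}(n,m)$ and $C_{qa}(n,m)$ are the images of the state spaces of $\cF_{n,m}\otimes_c\cF_{n,m}$ and $\cF_{n,m}\otimes_{\min}\cF_{n,m}$, using residual finite-dimensionality of $\ast_{i=1}^n\ell_m^{\infty}$ for the $qa$ case. The genuine gap is your second step. You propose to upgrade equality of the \emph{scalar} correlation sets to a (complete) order isomorphism between $\cF_{n,m}\otimes_{\min}\cF_{n,m}$ and $\cF_{n,m}\otimes_c\cF_{n,m}$, and then to push this up to $\mathfrak{A}\otimes_{\min}\mathfrak{A}=\mathfrak{A}\otimes_{\max}\mathfrak{A}$ for $\mathfrak{A}=C^*(\bZ_m^{\ast n})$ so that Kirchberg's theorem applies. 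Neither move is justified as written: Hahn--Banach separation of correlation sets only sees the scalar level; Theorem \ref{quotientadjoint} relates quotient maps to dual embeddings and does not convert equality of state spaces into positivity at all matrix levels; the ``block-diagonal absorption'' of $M_k$-amplifications into a larger $\cS_{n',m'}$ is precisely the construction you would have to supply and is not routine; and the passage from an operator-system identity ($\min=c$ on $\cF_{n,m}$) to a $C^{\ast}$-identity ($\min=\max$ on $\mathfrak{A}$) is itself a nontrivial theorem invoked without argument. Similarly, ``(2)$\Leftrightarrow$(3) by simulating $m$-outcome measurements with binary ones'' is not routine: a family of two-outcome PVMs does not reassemble into an $m$-outcome PVM, and the fixed-$m$ reduction in the literature requires a genuine argument.

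The known proofs avoid your second step entirely, and the paper's proof of the unitary analogue (Theorem \ref{connescrossnorms}) displays the missing technology. Connes' problem for a tracial pair $(\pi_{\tau}(\cA)'',\tau)$ is characterized by a purely scalar-level extension property: the functional $w_i\otimes w_j\mapsto\tau(u_iu_j^*)$ must extend to a state on the minimal-tensor operator system $\cX_{\infty}$ (Theorem \ref{satisfyingconnes}), while the trace always provides a state on the maximal side via the opposite-algebra trick (Theorems \ref{tracialstateonop} and \ref{tracesandstatesonmax}). Equality of the correlation sets is then used only to transfer states from the commuting to the minimal tensor product at the scalar level, exactly as in Proposition \ref{correlationorderiso} and the proof of Theorem \ref{connescrossnorms}; no complete order isomorphism, no $C^{\ast}$-tensor identity, and no density or symmetrization of synchronous (tracial) correlations is needed, which also disposes of the difficulty you flag at the end of your sketch. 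If you want to repair your outline, replace step 2 by this scalar-level argument (with PVMs in place of unitaries; recall any von Neumann algebra is generated by the order-two unitaries $2p-1$ coming from projections), keep Kirchberg's theorem for the easy direction (1)$\Rightarrow$(2), and prove (3)$\Rightarrow$(1) by the specific fixed-$m$ reduction in \cites{ozawa,fritz} rather than by an appeal to measurement simulation.
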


There is a natural link between the sets $C_t(n,m)$ for $t \in \{qa,qc\}$ and operator system tensor products.  Consider $\cA=\ast_{i=1}^n \ell_m^{\infty}$, the free product of $n$ copies of $\ell_m^{\infty}$, amalgamated over the identity.  Let $e_i^{(k)}$ be the generator of the $i$-th coordinate of the $k$-th copy of $\ell_m^{\infty}$ in $\cA$.  Let $$\cF_{n,m}=\spn \{ e_i^{(k)}: 1 \leq i \leq m, \, 1 \leq k \leq n\}.$$

By results from \cite{fritz} and \cite{junge}, the above correlation sets correspond to states on various operator system structures on $\cF_{n,m} \otimes \cF_{n,m}$.  To simplify the notation, we let $e_{a,x}$ correspond to the generator of the $a$-th coordinate in the $x$-th copy of $\ell_m^{\infty}$ on the left of the tensor product.  We let $f_{b,y}$ correspond to the generator of the $b$-th coordinate in the $y$-th copy of $\ell_m^{\infty}$ on the right of the tensor product.  Then $$C_{qa}(n,m)=\{ (s(e_{a,x} \otimes f_{b,y}))_{a,b,x,y}: s \in \cS(\cF_{n,m} \otimes_{\min} \cF_{n,m})\},$$
whereas $$C_{qc}(n,m)=\{(s(e_{a,x} \otimes f_{b,y}))_{a,b,x,y}: s \in \cS(\cF_{n,m} \otimes_c \cF_{n,m}) \}.$$

We define the set of \textbf{quantum maximal correlations} with $n$ inputs and $m$ outputs to be the set $$C_{qmax}(n,m)=\{(s(e_{a,x} \otimes f_{b,y}))_{a,b,x,y}: s \in \cS(\cF_{n,m} \otimes_{\max} \cF_{n,m})\}.$$

We will show that $C_{qmax}(n,m)$ is precisely the set of all non-signalling box correlation probabilities in the sense of \cite{tsirelson80}.  Before we can prove this result, we need a description of the dual of $\cF_{n,m}$.  To this end, the following is very useful.

\begin{mythe}
\emph{(Farenick-Kavruk-Paulsen-Todorov, \cite{FKPTwep})}
$\cF_{n,m}$ is completely order isomorphic to the coproduct of $n$ copies of $\ell_m^{\infty}$.  In other words, $$\cF_{n,m} \simeq \bigoplus_1 \{ \ell_m^{\infty}\}_{i=1}^n.$$
\end{mythe}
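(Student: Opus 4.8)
The plan is to show directly that $\cF_{n,m}$, together with the canonical inclusions of its $n$ free factors, satisfies the universal property that defines the coproduct of $n$ copies of $\ell_m^\infty$. Since Theorem \ref{coproduct} guarantees that a coproduct exists, and an operator system carrying structure maps with this universal property is unique — any two such are carried to one another by ucp maps that are mutually inverse by the uniqueness clauses, hence each is a unital complete order isomorphism — this yields $\cF_{n,m}\simeq\bigoplus_1\{\ell_m^\infty\}_{i=1}^n$.

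Let $j_k\colon\ell_m^\infty\to\cA=\ast_{i=1}^n\ell_m^\infty$ be the canonical unital $*$-homomorphism onto the $k$-th free factor; it is ucp, with $\ran(j_k)=\spn\{e_i^{(k)}:1\le i\le m\}\subseteq\cF_{n,m}$. Because $\sum_{i=1}^m e_i^{(k)}=1$ for every $k$, $\cF_{n,m}$ is a unital self-adjoint subspace of $\cA$, hence an operator system in the inherited structure, and it is spanned by $\bigcup_{k}\ran(j_k)$. Consequently any ucp map out of $\cF_{n,m}$ is determined by its compositions with the $j_k$, which supplies the uniqueness half of the universal property for free.

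For existence, fix an operator system $\cR$, represented as $\cR\subseteq\cB(\cH)$, and ucp maps $\varphi_k\colon\ell_m^\infty\to\cR$ for $1\le k\le n$. By Stinespring's dilation theorem each $\varphi_k$ dilates to a unital $*$-homomorphism $\pi_k\colon\ell_m^\infty\to\cB(\cK_k)$ on some $\cK_k\supseteq\cH$. Put $\cK=\cH\oplus\bigoplus_{k=1}^n(\cK_k\ominus\cH)$ and, identifying the summand $\cH\oplus(\cK_k\ominus\cH)$ with $\cK_k$, define $\tilde\pi_k\colon\ell_m^\infty\to\cB(\cK)$ to act as $\pi_k$ on this summand and as any unital representation of $\ell_m^\infty$ on the complementary summand $\bigoplus_{j\ne k}(\cK_j\ominus\cH)$. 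Each $\tilde\pi_k$ is then a unital $*$-homomorphism dilating $\varphi_k$ on the common space $\cK$: writing $P_\cH$ for the orthogonal projection onto $\cH$, we have $P_\cH\tilde\pi_k(x)|_\cH=P_\cH\pi_k(x)|_\cH=\varphi_k(x)$, since $\cK_k$ is $\tilde\pi_k$-invariant and contains $\cH$. As $\cA$ is the full free product, its universal property produces a unital $*$-homomorphism $\Pi\colon\cA\to\cB(\cK)$ with $\Pi\circ j_k=\tilde\pi_k$, and $\varphi:=P_\cH\,\Pi(\cdot)|_\cH$ is ucp; on the spanning set its values are $\varphi(e_i^{(k)})=\varphi_k(e_i^{(k)})\in\cR$, so $\varphi$ maps $\cF_{n,m}$ into $\cR$ and $\varphi\circ j_k=\varphi_k$. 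This completes the verification of the universal property (and in particular the $j_k$ are themselves complete order embeddings, being composites of the $\kappa_k$ with the resulting isomorphism).

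The step I expect to require the most care is the simultaneous dilation — placing all the $\pi_k$ on a single Hilbert space while retaining the correct compression onto $\cH$ — although the amalgamated-direct-sum construction above settles it without fuss, precisely because no commutation among the ranges is needed when mapping into a free product. As a sanity check on the algebra, both sides have dimension $nm-(n-1)$: the only linear relations among the $nm$ generators $e_i^{(k)}$ are $\sum_i e_i^{(k)}=\sum_i e_i^{(1)}$, while for the coproduct this follows from Theorem \ref{coproduct} since $\dim\cJ=n-1$; hence the ucp map $\bigoplus_1\{\ell_m^\infty\}_{i=1}^n\to\cF_{n,m}$ coming from the coproduct's universal property is automatically a bijection, reproving the conclusion along a second route.
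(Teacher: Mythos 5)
Your main argument is correct and complete: you verify that $\cF_{n,m}$, together with the canonical ucp maps $j_k\colon\ell_m^{\infty}\to\cF_{n,m}$, has the factorization property defining the coproduct --- uniqueness because $\cF_{n,m}=\spn\bigcup_k\ran(j_k)$, existence by Stinespring-dilating each $\varphi_k$, assembling the dilations on one Hilbert space, invoking the universal property of the full free product $\ast_{i=1}^n\ell_m^{\infty}$, and compressing back to $\cH$ (the compression lands in $\cR$ because it does so on the spanning set and $\cR\subseteq\cB(\cH)$ is a unital complete order inclusion) --- and the two-sided universal property then produces mutually inverse ucp maps, hence a unital complete order isomorphism, with the fact that the $j_k$ are complete order embeddings falling out as a corollary. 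For comparison: the paper gives no proof of this statement, quoting it from Farenick--Kavruk--Paulsen--Todorov, whose realization of the coproduct inside the unital free product rests on the same dilation circle of ideas, usually packaged through Boca's theorem on free products of ucp maps; your construction is a streamlined special case, made painless because $\ell_m^{\infty}$ is already a $C^{\ast}$-algebra (no Arveson extension step) and because mapping into a free product requires no commutation among the dilated representations. The paper itself only ever uses the quotient description $\cF_{n,m}\simeq\left(\bigoplus_{i=1}^n\ell_m^{\infty}\right)/\cJ$ of Theorem \ref{coproduct}, which your universal-property route recovers but does not need.

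One caveat: your closing ``second route'' is not a proof as stated. A unital completely positive bijection need not be a complete order isomorphism (its inverse need not even be positive), so the dimension count $\dim\cF_{n,m}=nm-(n-1)$ together with bijectivity of the canonical ucp map $\bigoplus_1\{\ell_m^{\infty}\}_{i=1}^n\to\cF_{n,m}$ does not by itself yield the conclusion; moreover, the assertion that the only linear relations among the $e_i^{(k)}$ are $\sum_i e_i^{(k)}=\sum_i e_i^{(1)}$ itself requires an argument (it follows a posteriori from the identification you prove). Since that paragraph is explicitly a sanity check, this does not affect the validity of your proof, but it should not be leaned on as an independent derivation.
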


\begin{cor}
The dual of $\cF_{n,m}$ is completely order isomorphic to $$\cS_{n,m}=\{ (\gamma_1,...,\gamma_n) \in \bigoplus_{i=1}^n \ell_m^{\infty}: \sum_{j=1}^n \gamma_i(j)=\sum_{j=1}^n \gamma_k(j), \, \forall 1 \leq i,k \leq m\}.$$
\end{cor}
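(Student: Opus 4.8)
The plan is to combine the identification $\cF_{n,m}\simeq\bigoplus_1\{\ell_m^{\infty}\}_{i=1}^n$ from the preceding theorem with the quotient realization of the coproduct (Theorem \ref{coproduct}) and the duality between complete quotient maps and complete order embeddings in finite dimensions (Theorem \ref{quotientadjoint}). Concretely, put $\cS=\bigoplus_{i=1}^n\ell_m^{\infty}$ with order unit $(e_1,\dots,e_n)$, and let $\cJ=\{(\lambda_i e_i)_{i=1}^n:\lambda_i\in\bC,\ \sum_{i}\lambda_i=0\}$ as in the preliminaries, so that $\cF_{n,m}\simeq\cS/\cJ$. The canonical quotient map $q:\cS\to\cS/\cJ$ is ucp, and since $\ker q=\cJ$ the induced map $\dot q$ is the identity on $\cS/\cJ$; hence $q$ is a complete quotient map between finite-dimensional operator systems.

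Now apply Theorem \ref{quotientadjoint}: the adjoint $q^d:(\cS/\cJ)^d\to\cS^d$ is a complete order embedding. As a linear subspace of $\cS^d$ its range is $\{g\circ q:g\in(\cS/\cJ)^d\}=\cJ^{\perp}:=\{g\in\cS^d:g|_{\cJ}=0\}$, because $q$ is surjective with kernel $\cJ$. Therefore $\cF_{n,m}^d\simeq(\cS/\cJ)^d$ is completely order isomorphic to $\cJ^{\perp}$, equipped with the matrix order inherited from $\cS^d$ and a faithful state of $\cS$ lying in $\cJ^{\perp}$ as Archimedean order unit; such a faithful state exists (e.g. the tuple with every $\gamma_i$ equal to a fixed strictly positive vector), so $\cJ^{\perp}$ is a genuine operator subsystem of $\cS^d$.

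It remains to identify $\cS^d$ and the subspace $\cJ^{\perp}$ explicitly. Since $\cS=\bigoplus_{i=1}^n\ell_m^{\infty}$ is a finite-dimensional commutative $C^{\ast}$-algebra, its matrix-ordered dual is again completely order isomorphic to $\bigoplus_{i=1}^n\ell_m^{\infty}$: identify $(\gamma_i)_{i=1}^n$ with the functional $(x_i)_{i=1}^n\mapsto\sum_{i=1}^n\sum_{j=1}^m\gamma_i(j)\,x_i(j)$ on $\cS$, under which positivity at every matrix level is entrywise, matching the operator system structure of $\bigoplus_{i=1}^n\ell_m^{\infty}$. A functional $(\gamma_i)$ annihilates $\cJ$ precisely when $\sum_{i=1}^n\lambda_i\bigl(\sum_{j=1}^m\gamma_i(j)\bigr)=0$ for every scalar tuple $(\lambda_i)$ with $\sum_i\lambda_i=0$, which holds if and only if $\sum_{j=1}^m\gamma_i(j)$ is independent of $i$; that is, $\cJ^{\perp}=\cS_{n,m}$. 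Combining with the previous step gives $\cF_{n,m}^d\simeq\cS_{n,m}$, completely order isomorphically.

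The only substantive ingredient is Theorem \ref{quotientadjoint}, which we are entitled to cite; the remaining steps — identifying $\cS^d$ with $\bigoplus_{i=1}^n\ell_m^{\infty}$ and computing the annihilator of $\cJ$ — are elementary linear algebra. The one point requiring genuine care, and hence the most likely source of a slip, is the bookkeeping of order units: $q^d$ need not respect the canonical order units of the two dual spaces, so one must choose the Archimedean order unit on $\cS^d$ (and on $\cS_{n,m}$) to be a faithful state of $\cS$ that lies in $\cJ^{\perp}$, after which $q^d$ becomes unital and the conclusion is unaffected, since any two faithful states yield completely order isomorphic operator systems.
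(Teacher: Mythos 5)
Your argument is correct and follows essentially the same route as the paper: realize $\cF_{n,m}$ as the complete quotient of $\bigoplus_{i=1}^n \ell_m^{\infty}$ by $\cJ$, apply Theorem \ref{quotientadjoint} to embed $\cF_{n,m}^d$ completely order isomorphically into $\left(\bigoplus_{i=1}^n \ell_m^{\infty}\right)^d \simeq \bigoplus_{i=1}^n \ell_m^{\infty}$, and identify the range with the annihilator of $\cJ$, which is exactly $\cS_{n,m}$. Your extra bookkeeping of order units is reasonable (the paper glosses over it), though it is cleaner to note that the inherited unit $(e_1,\dots,e_n)\in\cS_{n,m}$ already corresponds, after normalization, to a faithful state on $\cF_{n,m}$, which gives unitality directly without invoking the more delicate general claim that any two faithful states yield completely order isomorphic dual operator systems.
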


\begin{proof}
Since $\cF_{n,m}$ is a complete quotient of $\bigoplus_{i=1}^n \ell_m^{\infty}$ by the kernel $\cJ$ in the sense of Theorem \ref{coproduct}, the adjoint map gives a complete order embedding of $\cF_{n,m}^d$ into $\left( \bigoplus_{i=1}^n \ell_m^{\infty} \right)^d \simeq \bigoplus_{i=1}^n \ell_m^{\infty}$ by Theorem \ref{quotientadjoint}.  Thus, the vector space dual of $\cF_{n,m}$ with the operator system structure inherited from $\bigoplus_{i=1}^n \ell_m^{\infty}$ is the operator system dual of $\cF_{n,m}$.  This space is none other than the annihilator of $\cJ$, which is exactly $\cS_{n,m}$.
\end{proof}

For $n,m \in \bN$, we define the set of \textbf{non-signalling box probabilities} to be the set of coordinates $\{(p(a,b|x,y)): 1 \leq a,b \leq m, \, 1 \leq x,y \leq n\}$, subject to the following conditions:
\begin{itemize}
\item
$p(a,b|x,y) \geq 0$, for all $a,b,x,y$;
\item
$\sum_{a,b=1}^m p(a,b|x,y)=1$ for all $x,y$;
\item
$\sum_{a=1}^m p(a,b|x,y)=\sum_{a=1}^n p(a,b|x',y)$ for all $b,x,x',y$; and
\item
$\sum_{b=1}^m p(a,b|x,y)=\sum_{b=1}^m p(a,b|x,y')$ for all $a,x,y,y'$.
\end{itemize}
We denote by $C_{nsb}(n,m)$ the set of all non-signalling box probabilities.

\begin{mythe}
For all $n,m \geq 2$, $C_{qmax}(n,m)=C_{nsb}(n,m)$.
\end{mythe}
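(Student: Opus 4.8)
The plan is to show $C_{qmax}(n,m)=C_{nsb}(n,m)$ by first describing all unital linear functionals on the vector space $\cF_{n,m}\otimes\cF_{n,m}$, and then determining precisely when such a functional is positive on the \emph{maximal} tensor product; for the latter I would pass to the dual operator system $\cS_{n,m}\cong\cF_{n,m}^d$. Since $\cF_{n,m}=\spn\{e_i^{(k)}\}$ is spanned by its generators subject only to the relations $\sum_{i=1}^m(e_i^{(k)}-e_i^{(k')})=0$, the space $\cF_{n,m}\otimes\cF_{n,m}$ is spanned by the elementary tensors $e_{a,x}\otimes f_{b,y}$ subject only to $\sum_a(e_{a,x}-e_{a,x'})\otimes f_{b,y}=0$ and $e_{a,x}\otimes\sum_b(f_{b,y}-f_{b,y'})=0$. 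Hence a linear functional $s$ on $\cF_{n,m}\otimes\cF_{n,m}$ is the same thing as a choice of scalars $p(a,b|x,y):=s(e_{a,x}\otimes f_{b,y})$ for which $\sum_a p(a,b|x,y)$ is independent of $x$ and $\sum_b p(a,b|x,y)$ is independent of $y$, and $s$ is unital exactly when $\sum_{a,b}p(a,b|x,y)=1$. These are precisely the normalization and non-signalling conditions in the definition of $C_{nsb}(n,m)$, so it remains only to prove that, for such an $s$, positivity of $s$ on $\cF_{n,m}\otimes_{\max}\cF_{n,m}$ is equivalent to $p(a,b|x,y)\ge0$ for all $a,b,x,y$.

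The inclusion $C_{qmax}(n,m)\subseteq C_{nsb}(n,m)$ is immediate: each $e_{a,x}$ and each $f_{b,y}$ is a positive element of $\cF_{n,m}$, so by condition (2) in the definition of an operator system tensor product $e_{a,x}\otimes f_{b,y}\in(\cF_{n,m}\otimes_{\max}\cF_{n,m})_+$, whence $p(a,b|x,y)=s(e_{a,x}\otimes f_{b,y})\ge0$ for any state $s$. For the reverse inclusion, given $(p(a,b|x,y))\in C_{nsb}(n,m)$ the previous paragraph supplies a unital functional $s$ on $\cF_{n,m}\otimes\cF_{n,m}$ with $s(e_{a,x}\otimes f_{b,y})=p(a,b|x,y)$, and I must show that $s$ is positive on the maximal tensor product. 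Here I would invoke the duality between the minimal and maximal tensor products in the finite-dimensional setting: the canonical map is a complete order isomorphism $(\cF_{n,m}\otimes_{\max}\cF_{n,m})^d\cong\cF_{n,m}^d\otimes_{\min}\cF_{n,m}^d$ \cite{KPTT}, so $s$ is positive on the maximal tensor product if and only if the corresponding element of $\cF_{n,m}^d\otimes\cF_{n,m}^d$ is positive in $\cF_{n,m}^d\otimes_{\min}\cF_{n,m}^d$. By the Corollary above, $\cF_{n,m}^d$ is completely order isomorphic to the operator subsystem $\cS_{n,m}$ of the finite-dimensional commutative $C^{\ast}$-algebra $\bigoplus_{i=1}^n\ell_m^\infty$; since $\otimes_{\min}$ is injective \cite{KPTT}, positivity in $\cS_{n,m}\otimes_{\min}\cS_{n,m}$ is equivalent to positivity in $\left(\bigoplus_{i=1}^n\ell_m^\infty\right)\otimes_{\min}\left(\bigoplus_{i=1}^n\ell_m^\infty\right)$, which is again a finite-dimensional commutative $C^{\ast}$-algebra, namely functions on an index set of size $(nm)^2$, so positivity there is coordinatewise nonnegativity. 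Finally, tracing the dual pairings through the identification $\cF_{n,m}^d\cong\cS_{n,m}$ shows that the coordinate of this element indexed by $\big((x,a),(y,b)\big)$ is exactly $s(e_{a,x}\otimes f_{b,y})=p(a,b|x,y)$; hence $s$ is positive on $\cF_{n,m}\otimes_{\max}\cF_{n,m}$ precisely when every $p(a,b|x,y)\ge0$, which together with the first paragraph yields $C_{nsb}(n,m)\subseteq C_{qmax}(n,m)$.

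I expect the only genuinely delicate point to be this last step: invoking the min--max duality in the correct direction (so that states of the maximal product correspond to positive elements of the minimal product of the duals, not the reverse), and then carrying the dual pairing through the quotient-to-annihilator description of $\cF_{n,m}^d$ supplied by the Corollary, so as to confirm that $s$ does correspond to the element of $\bigoplus_{i=1}^n\ell_m^\infty\otimes\bigoplus_{i=1}^n\ell_m^\infty$ whose $\big((x,a),(y,b)\big)$-entry is $p(a,b|x,y)$. The linear-algebraic description of the functionals on $\cF_{n,m}\otimes\cF_{n,m}$ and the easy inclusion $C_{qmax}(n,m)\subseteq C_{nsb}(n,m)$ are routine.
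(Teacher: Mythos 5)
Your proposal is correct and follows essentially the same route as the paper: the easy inclusion via positivity of $e_{a,x}\otimes f_{b,y}$ and the coproduct relations, and the reverse inclusion by identifying the unital functional $s$ with an element of $\left(\bigoplus_{j=1}^n \ell_m^{\infty}\right)\otimes_{\min}\left(\bigoplus_{j=1}^n \ell_m^{\infty}\right)$ through $\cF_{n,m}^d\simeq\cS_{n,m}$, where positivity is coordinatewise; the paper simply leaves the finite-dimensional duality $(\cS\otimes_{\max}\cT)^d\simeq\cS^d\otimes_{\min}\cT^d$ implicit where you spell it out. The only nitpick is attribution: that duality is due to Farenick and Paulsen rather than the tensor-product paper of Kavruk--Paulsen--Todorov--Tomforde, but the statement and the direction in which you invoke it are correct.
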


\begin{proof}
Let $s \in \cS(\cF_{n,m} \otimes_{\max} \cF_{n,m})$; we will show that $(s(e_{a,x} \otimes f_{b,y}))_{a,b,x,y}$ is in $C_{nsb}(n,m)$.  As $e_{a,x}$ and $f_{b,y}$ are positive elements in $\cF_{n,m}$, we have $e_{a,x} \otimes f_{b,y} \in (\cF_{n,m} \otimes_{\max} \cF_{n,m})_+$.  Since $1=\sum_{a=1}^m e_{a,x}$ and $1=\sum_{b=1}^m f_{b,y}$, it is easy to see that $1 \otimes 1=\sum_{a,b=1}^m e_{a,x} \otimes f_{b,y}$, so that the first two conditions of $C_{nsb}(n,m)$ hold for $(s(e_{a,x} \otimes f_{b,y}))_{a,b,x,y}$.  We show that the third condition also holds; the fourth condition is similar.  Let $x \neq x'$; then since $\cF_{n,m}$ is a coproduct, $\sum_{a=1}^m e_{a,x}=\sum_{a=1}^m e_{a,x'}=1$.  Hence, $$\sum_{a=1}^m e_{a,x} \otimes f_{b,y}=\sum_{a=1}^m e_{a,x'} \otimes f_{b,y}.$$
The third and fourth conditions follow, so that $(s(e_{a,x} \otimes f_{b,y}))_{a,b,x,y}$ is an element of $C_{nsb}(n,m)$.

Conversely, suppose that $(p(a,b|x,y))_{a,b,x,y}$ is in $C_{nsb}(n,m)$.  Define a function $s:\cF_{n,m} \otimes_{\max} \cF_{n,m} \to \bC$ by $s(e_{a,x} \otimes f_{b,y})=p(a,b|x,y)$.  The third and fourth conditions guarantee that $s$ is a functional on $\cF_{n,m} \otimes \cF_{n,m}$. We see that $$s(1)=\sum_{a,b=1}^m s(e_{a,x} \otimes f_{b,y})=\sum_{a,b=1}^m p(a,b|x,y)=1.$$
Hence, $s$ is unital.  Identify $s$ with its image in $\left( \bigoplus_{j=1}^n \ell_m^{\infty} \right) \otimes_{\min} \left( \bigoplus_{j=1}^n \ell_m^{\infty} \right)$.  This element is positive if and only if every coordinate is non-negative.  The coordinates of $s$ in $\left( \bigoplus_{j=1}^n \ell_m^{\infty} \right) \otimes_{\min} \left( \bigoplus_{j=1}^n \ell_m^{\infty} \right)$ are precisely the elements $s(e_{a,x} \otimes f_{b,y})=p(a,b|x,y)$, so that $s$ is positive.  Hence, $s$ is a state on $\cF_{n,m} \otimes_{\max} \cF_{n,m}$, which shows that $(p(a,b|x,y))_{a,b,x,y} \in C_{qmax}(n,m)$.  Therefore, $C_{qmax}(n,m)=C_{nsb}(n,m)$.
\end{proof}

%%%%%%%%%%%%
%%%%%%%%%%%%%
%%%%%%%%%%%%%%

\section{Unitary Correlation Norms and Connes' Embedding Problem}

For $n \in \bN$ with $n \geq 2$, we let $\cU_{nc}(n)$ denote the universal $C^{\ast}$-algebra with generators $\{u_{ij}:1 \leq i,j \leq n \}$ such that the matrix $U=(u_{ij})$ is unitary in $M_n(\cU_{nc}(n))$.  This $C^{\ast}$-algebra was defined by L. Brown in \cite{brown}.  It possesses the following universal property: if $\cA$ is a unital $C^{\ast}$-algebra and $\{a_{ij}: 1 \leq i,j \leq n \}$ is a subset of $\cA$ such that $(a_{ij})$ is unitary in $M_n(\cA)$, then there is a unique unital $*$-homomorphism $\pi:\cU_{nc}(n) \to \cA$ such that $\pi(u_{ij})=a_{ij}$.  We let $\cV_n=\spn \{1,u_{ij},u_{ij}^*\}_{i,j=1}^n \subseteq \cU_{nc}(n)$ be the operator system spanned by the generators of $\cU_{nc}(n)$.  The operator system $\cV_n$ has the following universal property:

\begin{pro}
\emph{(Harris, \cite{harris})}
Let $\{a_{ij}:1 \leq i,j \leq n \}$ be a subset of a unital $C^{\ast}$-algebra $\cA$ such that $\|(a_{ij})\| \leq 1$.  Then there is a unique ucp map $\psi:\cV_n \to \cA$ such that $\psi(u_{ij})=a_{ij}$ for all $1 \leq i,j \leq n$.
\end{pro}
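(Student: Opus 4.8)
The plan is to establish existence and uniqueness separately, exploiting the duality and quotient machinery recalled in \S1. For existence, the natural route is to realize $\cV_n$ as an operator system quotient of a direct sum of copies of $M_2$, since the contractivity condition $\|(a_{ij})\| \le 1$ is exactly the condition that guarantees one can build a ucp map on such a quotient. More precisely, first I would recall (from \cite{harris}) the identification of $\cV_n$ with a concrete operator system: one expects $\cV_n \simeq (\bigoplus_1 M_2)/\cJ$ for an appropriate completely order proximinal kernel $\cJ$, or equivalently that $\cV_n^d$ sits inside $\bigoplus M_2$ by Theorem~\ref{quotientadjoint}. Given a subset $\{a_{ij}\}$ with $\|(a_{ij})\| \le 1$, the block operator $\begin{bmatrix} I & A \\ A^* & I\end{bmatrix}$ (with $A = (a_{ij})$) is positive in $M_{2n}(\cA)$; this is the standard trick. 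Using the universal property of the coproduct (the paragraph before Theorem~\ref{coproduct}), or directly composing ucp maps out of each $M_2$ summand built from the blocks of $A$, one assembles a ucp map on $\bigoplus_1 M_2$, and then checks it kills $\cJ$ so that it descends, via Proposition~\ref{firstiso} and the complete quotient map property, to a ucp map $\psi : \cV_n \to \cA$ with $\psi(u_{ij}) = a_{ij}$.

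The alternative, perhaps cleaner, existence argument is purely via the universal property of $\cU_{nc}(n)$ together with an operator-system dilation: given $A = (a_{ij})$ with $\|A\| \le 1$ inside $\cA \subseteq \cB(\cH)$, dilate $A$ to a unitary $\widetilde{U} = (\widetilde{u}_{ij})$ on a larger Hilbert space $\cH \oplus \cH'$ (e.g.\ the unitary $2\times 2$ block dilation $\begin{bmatrix} A & (I - AA^*)^{1/2} \\ (I - A^*A)^{1/2} & -A^* \end{bmatrix}$). Then Brown's universal property gives a unital $*$-homomorphism $\pi : \cU_{nc}(n) \to \cB(\cH \oplus \cH')$ with $\pi(u_{ij}) = \widetilde{u}_{ij}$, and composing with the compression to $\cH$ yields a ucp map $\cV_n \to \cB(\cH)$ sending $u_{ij} \mapsto a_{ij}$. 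One must then argue this ucp map actually lands in $\cA$ (or rather, that $\cV_n \to \cB(\cH)$ restricts appropriately) — since the $a_{ij}$ and the image of $1$ all lie in $\cA$ and $\cV_n$ is their span, the image is literally contained in $\cA$, so this is immediate. I expect I would present this dilation argument as it is shortest.

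For uniqueness, the key point is that $\cV_n = \spn\{1, u_{ij}, u_{ij}^*\}$, so any ucp map is determined on a spanning set by its values on $1$ (forced to be $1$ by unitality), on the $u_{ij}$, and on the $u_{ij}^*$; and a ucp map is $*$-preserving, so $\psi(u_{ij}^*) = \psi(u_{ij})^* = a_{ij}^*$ is also determined. Hence $\psi$ is unique as a linear map, a fortiori as a ucp map. This step is routine.

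The main obstacle is the existence half, and within it the verification that the constructed map is genuinely \emph{completely} positive, not merely positive and unital — this is where one cannot avoid invoking the dilation/quotient structure rather than a bare-hands computation. In the dilation approach the complete positivity is automatic (compression of a $*$-homomorphism is ucp), which is precisely why I favor it; the only subtlety is confirming that the $2 \times 2$ block unitary dilation exists for an arbitrary contraction $A$ over a $C^*$-algebra, which follows from continuous functional calculus applied to $A^*A$ and $AA^*$ together with the intertwining identity $A(I - A^*A)^{1/2} = (I - AA^*)^{1/2}A$. Once that identity and the resulting unitarity of the block matrix are in place, Brown's universal property does the rest, and the proposition follows.
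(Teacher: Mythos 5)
Your dilation argument is correct, and since this paper only quotes the proposition from \cite{harris} without reproving it, the relevant comparison is with that source: the proof there is essentially the same, namely the $2\times 2$ block unitary (Julia) dilation of the contraction $(a_{ij})$, viewed after the canonical shuffle as an $n\times n$ unitary over $M_2(\cA)$, followed by Brown's universal property of $\cU_{nc}(n)$ and compression to the $(1,1)$ corner, with uniqueness exactly as you say since ucp maps are self-adjoint and determined on the spanning set $\{1,u_{ij},u_{ij}^*\}$. Your favored route is thus the intended one (the speculative coproduct-of-$M_2$'s alternative in your first paragraph is unnecessary and not how $\cV_n$ is presented here, where it appears as a quotient of $M_{2n}$), and no gaps remain.
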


The following theorem shows how $\cV_n$ can be obtained as a quotient of $M_{2n}$.

\begin{mythe}
\emph{(Harris, \cite{harris})}
\label{vnquotient}
The map $\varphi_n:M_{2n} \to \cV_n$ given by $$\varphi_n(E_{ij})=\begin{cases} \frac{1}{2n} 1 & i=j \\ \frac{1}{2n}u_{i(j-n)} & i \leq n, \, j \geq n+1 \\ \frac{1}{2n}u_{j(i-n)}^* & i \geq n+1, \, j \leq n \\ 0 & \text{otherwise} \end{cases}$$
is a complete quotient map.
\end{mythe}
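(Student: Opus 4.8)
The plan is to first record the elementary properties of $\varphi_n$, then reduce the complete quotient assertion, via Theorem~\ref{quotientadjoint}, to a single positivity claim about maps out of $\cV_n$, and finally settle that claim using the universal property of $\cV_n$. The map $\varphi_n$ is unital, since $\varphi_n(I_{2n})=\sum_{i=1}^{2n}\varphi_n(E_{ii})=\sum_{i=1}^{2n}\tfrac1{2n}1=1$, and it is surjective, since $1$ and each $u_{ij}$, $u_{ij}^*$ lie in its range. For complete positivity, by Choi's theorem it suffices to check that the $2n\times2n$ Choi matrix $\big(\varphi_n(E_{ij})\big)_{i,j=1}^{2n}$ is positive in $M_{2n}(\cV_n)$. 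Reading off the definition of $\varphi_n$, this matrix is, in $2\times2$ block form with $n\times n$ blocks, equal to $\tfrac1{2n}\left(\begin{smallmatrix}I_n&U\\ U^*&I_n\end{smallmatrix}\right)$, where $U=(u_{ij})$. Since $U$ is unitary in $M_n(\cU_{nc}(n))$, we have $\left(\begin{smallmatrix}I_n&U\\ U^*&I_n\end{smallmatrix}\right)=\left(\begin{smallmatrix}I_n\\ U^*\end{smallmatrix}\right)\left(\begin{smallmatrix}I_n& U\end{smallmatrix}\right)\ge0$ in $M_{2n}(\cU_{nc}(n))$, and as $\cV_n\subseteq\cU_{nc}(n)$ completely order isomorphically with all entries of the matrix in $\cV_n$, it is positive already in $M_{2n}(\cV_n)$. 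Hence $\varphi_n$ is ucp.

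By Theorem~\ref{quotientadjoint}, to conclude that $\varphi_n$ is a complete quotient map I would show that the adjoint $\varphi_n^d\colon\cV_n^d\to M_{2n}^d$ is a complete order embedding. It is injective because $\varphi_n$ is onto, and completely positive because adjoints of completely positive maps are completely positive, so the issue is that $\varphi_n^d$ reflects positivity. By the definition of the matrix order on duals, $(f_{ab})\in M_m(\cV_n^d)$ is positive exactly when the map $\Phi\colon\cV_n\to M_m$, $\Phi(s)=(f_{ab}(s))$, is completely positive, while $\big(\varphi_n^d(f_{ab})\big)\in M_m(M_{2n}^d)$ is positive exactly when $\Phi\circ\varphi_n\colon M_{2n}\to M_m$ is completely positive; since $M_{2n}$ is a matrix algebra, by Choi's theorem this means that the Choi matrix of $\Phi\circ\varphi_n$, namely $\tfrac1{2n}\,\Phi^{(2n)}\!\left(\begin{smallmatrix}I_n&U\\ U^*&I_n\end{smallmatrix}\right)$, is positive. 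So the proof reduces to the claim that if $\Phi\colon\cV_n\to M_m$ is linear with $\Phi^{(2n)}\!\left(\begin{smallmatrix}I_n&U\\ U^*&I_n\end{smallmatrix}\right)\ge0$ in $M_{2nm}$, then $\Phi$ is completely positive.

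To prove this claim I would write $\Phi^{(2n)}\!\left(\begin{smallmatrix}I_n&U\\ U^*&I_n\end{smallmatrix}\right)=\left(\begin{smallmatrix}I_n\otimes R&T\\ T^*&I_n\otimes R\end{smallmatrix}\right)$, where $R=\Phi(1)\in M_m$ and $T=\big(\Phi(u_{ij})\big)_{ij}\in M_n(M_m)$; the hypothesis forces $R\ge0$ and $\Phi(u_{ij}^*)=\Phi(u_{ij})^*$. First I would reduce to the case $R>0$ by replacing $\Phi$ with $\Phi+\ee\,\omega(\cdot)\,I_m$, where $\omega$ is the state of $\cV_n$ with $\omega(1)=1$ and $\omega(u_{ij})=0$ for all $i,j$ (a state by the universal property of $\cV_n$ applied to the zero contraction): this leaves $T$ unchanged, preserves the hypothesis, makes $R$ invertible, and, as the cone of completely positive maps $\cV_n\to M_m$ is closed, permits passing to the limit $\ee\downarrow0$. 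When $R>0$, positivity of $\left(\begin{smallmatrix}I_n\otimes R&T\\ T^*&I_n\otimes R\end{smallmatrix}\right)$ is equivalent to $\left\|(I_n\otimes R^{-1/2})\,T\,(I_n\otimes R^{-1/2})\right\|\le1$, that is, to $(a_{ij})_{ij}$ being a contraction in $M_n(M_m)$, where $a_{ij}:=R^{-1/2}\Phi(u_{ij})R^{-1/2}$. By the universal property of $\cV_n$ there is then a ucp map $\psi\colon\cV_n\to M_m$ with $\psi(u_{ij})=a_{ij}$ for all $i,j$, and $\Psi(s):=R^{1/2}\psi(s)R^{1/2}$ is completely positive and agrees with $\Phi$ on $1$ and on each $u_{ij}$, hence (taking adjoints) on each $u_{ij}^*$, hence on all of $\cV_n=\spn\{1,u_{ij},u_{ij}^*\}$. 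Thus $\Phi=\Psi$ is completely positive.

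The step I expect to be the main obstacle is exactly this last claim: one must recognize that the single positivity $\Phi^{(2n)}\!\left(\begin{smallmatrix}I_n&U\\ U^*&I_n\end{smallmatrix}\right)\ge0$, after normalizing by $\Phi(1)$ (with a harmless perturbation making it invertible), is precisely the contractivity of $(u_{ij})$ that the universal property of $\cV_n$ upgrades to a genuine unital completely positive map. The remaining ingredients — the Choi-matrix identity for $\varphi_n$, that adjoints of completely positive maps are completely positive, and closedness of the completely positive cone — are routine.
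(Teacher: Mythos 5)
Your proof is correct: the Choi-matrix computation for $\varphi_n$, the reduction via Theorem \ref{quotientadjoint} to showing that $\varphi_n^d$ reflects complete positivity, the $\ee$-perturbation making $\Phi(1)$ invertible, and the use of the universal property of $\cV_n$ to upgrade the contraction $\bigl(\Phi(1)^{-1/2}\Phi(u_{ij})\Phi(1)^{-1/2}\bigr)_{i,j}$ to a ucp map all go through, and the self-adjointness and closed-cone points you need are handled. The present paper does not prove this theorem (it is imported from \cite{harris}), and your duality argument is the natural one, essentially the route taken in that reference.
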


As an immediate corollary, we obtain the following:

\begin{cor}
\label{vnvmquotient}
If $n,m \in \bN$ with $n,m \geq 2$, then $\varphi_n \otimes \varphi_m:M_{2n} \otimes M_{2m} \to \cV_n \otimes_{\max} \cV_m$ is a complete quotient map.
\end{cor}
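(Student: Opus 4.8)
The plan is to deduce Corollary~\ref{vnvmquotient} from Theorem~\ref{vnquotient} together with a general tensorial stability property of complete quotient maps for the maximal tensor product. The key external fact I would invoke is that the maximal tensor product is \emph{projective} in the sense of \cite{quotients}: if $\psi_i : \cR_i \to \cT_i$ are complete quotient maps of operator systems ($i = 1,2$), then $\psi_1 \otimes \psi_2 : \cR_1 \otimes_{\max} \cR_2 \to \cT_1 \otimes_{\max} \cT_2$ is again a complete quotient map. (This is the operator-system analogue of the statement that the maximal C*-tensor product is exact with respect to quotients.) Granting this, the corollary is immediate: by Theorem~\ref{vnquotient} the maps $\varphi_n : M_{2n} \to \cV_n$ and $\varphi_m : M_{2m} \to \cV_m$ are complete quotient maps, and since $M_{2n} = M_{2n} \otimes_{\max} \bC$-type identifications and $M_{2n} \otimes M_{2m} = M_{2n} \otimes_{\max} M_{2m}$ (matrix algebras are nuclear, so all operator-system tensor products on $M_{2n} \otimes M_{2m}$ coincide), the map $\varphi_n \otimes \varphi_m : M_{2n} \otimes M_{2m} \to \cV_n \otimes_{\max} \cV_m$ is the composite (or rather the tensor product) of two complete quotient maps and hence a complete quotient map.

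In more detail, I would carry out the argument in the following steps. First, record that $M_{2n} \otimes M_{2m} \cong M_{4nm}$ is a full matrix algebra and is therefore nuclear, so that $M_{2n} \otimes_{\min} M_{2m} = M_{2n} \otimes_{\max} M_{2m}$ as operator systems; this lets us regard $\varphi_n \otimes \varphi_m$ as a ucp map with domain $M_{2n} \otimes_{\max} M_{2m}$. Second, invoke projectivity of $\otimes_{\max}$: the tensor product of two complete quotient maps of operator systems is a complete quotient map onto the maximal tensor product of the targets. Third, apply this with $\psi_1 = \varphi_n$, $\psi_2 = \varphi_m$: the resulting map $M_{2n} \otimes_{\max} M_{2m} \to \cV_n \otimes_{\max} \cV_m$ is a complete quotient map, and by step one its domain is just $M_{2n} \otimes M_{2m}$ with its unique operator-system structure, which is exactly the statement of the corollary.

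The main obstacle is really just \emph{which} form of the projectivity statement to cite. If the paper's references only contain the fact that $\otimes_{\max}$ of a single complete quotient map with the identity on a fixed operator system is a complete quotient map, then I would instead factor $\varphi_n \otimes \varphi_m = (\id_{\cV_n} \otimes \varphi_m) \circ (\varphi_n \otimes \id_{M_{2m}})$ and use that a composition of complete quotient maps is a complete quotient map, together with functoriality of $\otimes_{\max}$ to identify each factor: $\varphi_n \otimes \id_{M_{2m}} : M_{2n} \otimes_{\max} M_{2m} \to \cV_n \otimes_{\max} M_{2m}$ is a complete quotient map, and likewise for $\id_{\cV_n} \otimes \varphi_m$. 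Each of these one-sided statements is a direct consequence of the description of $\otimes_{\max}$ via the defining factorization $X + \ee I = A^*(S \otimes T) A$: pushing forward a factorization through $\varphi_n \otimes \id$ and lifting positive elements along the complete quotient map $\varphi_n$ shows the image is positive in the quotient sense, and the kernel is correctly identified by Proposition~\ref{firstiso} and Theorem~\ref{quotientadjoint}. Either route closes the argument; the composition-of-quotients version is the more self-contained one and is the one I would actually write out.
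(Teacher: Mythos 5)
Your proposal is correct in substance but takes a genuinely different route from the paper. The paper argues directly and in one step: given a strictly positive $X \in M_p(\cV_n \otimes_{\max} \cV_m)$, it uses the exact factorization $X = A(S \otimes T)A^*$ with $S \in M_k(\cV_n)_+$, $T \in M_q(\cV_m)_+$ available for strictly positive elements of a maximal tensor product, and then lifts $S$ and $T$ to positive matrices over $M_{2n}$ and $M_{2m}$, invoking not only that $\varphi_n,\varphi_m$ are complete quotient maps but also that the kernels $\cJ_{2n},\cJ_{2m}$ are completely order proximinal; this exhibits a positive preimage of $X$ at once. Your primary route instead quotes projectivity of $\otimes_{\max}$ (a tensor product of complete quotient maps is a complete quotient map). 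That statement is true, but watch the attribution: it is not among the results of \cite{quotients} as used in this paper --- the one-variable projectivity of the maximal operator system tensor product is a theorem of K.~H. Han (J. Math. Anal. Appl., 2011) --- so you would either need to add that reference or prove the one-sided statement yourself, as in your fallback, which is essentially the paper's argument split into two legs plus the (easy, but worth a line) fact that a composition of complete quotient maps is a complete quotient map.

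Two precision points in the fallback. First, ``lifting positive elements along the complete quotient map $\varphi_n$'' is slightly too quick: being a complete quotient map only guarantees positive lifts of \emph{strictly} positive elements, whereas the $S$ and $T$ appearing in the factorization are merely positive. You need either complete order proximinality of $\cJ_{2n}$ and $\cJ_{2m}$ (exactly what the paper invokes) or a perturbation argument, replacing $S,T$ by $S+\delta 1$, $T+\delta 1$, lifting those, and absorbing the resulting small positive error into the $\varepsilon$-slack in the definition of the quotient cone using arbitrary self-adjoint lifts of $S$ and $T$. Second, the appeal to Proposition \ref{firstiso} and Theorem \ref{quotientadjoint} for ``identifying the kernel'' is not really needed: once every strictly positive element of the target is shown to have a positive preimage, the complete quotient property follows, and the identification $\ker(\varphi_n \otimes \varphi_m) = \cJ_{2n} \otimes M_{2m} + M_{2n} \otimes \cJ_{2m}$ is a direct linear-algebra check, as the paper notes. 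Your observation that $M_{2n} \otimes M_{2m} \cong M_{4nm}$ carries a unique operator system structure, so the domain needs no tensor-norm decoration, is correct and harmless. With these repairs either of your routes closes the argument; the paper's version is shorter because it lifts both tensor legs simultaneously and quotes proximinality.
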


\begin{proof}
It is straightforward to check that if $\cJ_{2n}$ is the kernel of $\varphi_n$, then $$\ker(\varphi_n \otimes \varphi_m)=\cJ_{2n} \otimes M_{2m}+M_{2n} \otimes \cJ_{2m}.$$
Now, let $X \in M_p(\cV_n \otimes_{\max} \cV_m)$ be strictly positive; that is, assume that $X \geq \ee 1$ for some $\ee>0$.  Then there are $S \in M_k(\cV_n)_+$, $T \in M_q(\cV_m)_+$, and a rectangular matrix $A \in M_{p,kq}$ such that $$X=A(S \otimes T)A^*.$$
Since $\varphi_n$ and $\varphi_m$ are complete quotient maps and $\cJ_{2n}$ and $\cJ_{2m}$ are completely order proximinal, we may find matrices $P,Q$ with entries in $M_{2n}$ and $M_{2m}$, respectively, with quotient images equal to $S$ and $T$ respectively.  Then $X$ is the image of a positive element in $M_{2n} \otimes M_{2m}$, and we are done.
\end{proof}

Recall that if $\cX$ and $\cY$ are Banach spaces, then a \textbf{reasonable cross-norm} on the vector space tensor product $\cX \otimes \cY$ is a norm $\alpha$ on $\cX \otimes \cY$ satisfying the following:
\begin{itemize}
\item
$\alpha(x \otimes y) \leq \|x\| \|y\|$ whenever $x \in \cX$ and $y \in \cY$; and
\item
if $\varphi \in \cX^*$ and $\psi \in \cY^*$, then $\varphi \otimes \psi$ is bounded on $\cX \otimes \cY$, with $$\|\varphi \otimes \psi\|:=\sup \{ |(\varphi \otimes \psi)(u)|: u \in \cX \otimes \cY, \, \alpha(u) \leq 1\} \leq \| \varphi \| \|\psi \|.$$
\end{itemize}

Given a reasonable cross-norm $\alpha$ on $\cX \otimes \cY$, we denote by $\cX \otimes_{\alpha} \cY$ the vector space $\cX \otimes \cY$ with the norm $\alpha$, and we denote by $\cX \widehat{\otimes}_{\alpha} \cY$ the completion of $\cX \otimes \cY$ with respect to the norm $\alpha$.  Two examples are in order.  The first is the \textbf{projective Banach space tensor norm} $\| \cdot \|_{\pi}$, given by $$\|u\|_{\pi}=\inf \left\{ \sum_{i=1}^n \|x_i\|\|y_i\|: x_i \in \cX, \, y_i \in \cY, \, n \in \bN, \, u=\sum_{i=1}^n x_i \otimes y_i \right\}.$$
The second example is the \textbf{injective Banach space tensor norm} $\| \cdot \|_{\ee}$, given by $$\|u\|_{\ee}=\sup \{ |(\varphi \otimes \psi)(u)|: \varphi \in \cX^*, \, \psi \in \cY^*, \, \| \varphi \| \leq 1, \, \|\psi \| \leq 1\}.$$  It is well-known (see, for example, \cite[Proposition 6.1]{ryan}) that a norm $\alpha$ on $\cX \otimes \cY$ is a reasonable cross-norm if and only if $\| \cdot \|_{\ee} \leq \alpha(\cdot) \leq \| \cdot \|_{\pi}$.  We will also say that a reasonable cross-norm $\alpha$ that is defined for all pairs of Banach spaces is \textbf{functorial} if, for all Banach spaces $\cX_1,\cX_2,\cY_1,\cY_2$ and bounded linear maps $S:\cX_1 \to \cX_2$ and $T:\cY_1 \to \cY_2$, the map $S \otimes T:\cX_1 \otimes \cY_1 \to \cX_2 \otimes \cY_2$ extends to a bounded linear map from $\cX_1 \what{\otimes}_{\alpha} \cX_2$ to $\cY_1 \what{\otimes}_{\alpha} \cY_2$ with norm $\|S\| \|T\|$.  Both the injective and projective Banach space tensor norms are functorial \cite[p. 129]{ryan}.

We will explore properties of the unitary correlation sets defined in \cite{harris}.  By way of notation, whenever $\cH$ is a Hilbert space and $U=(U_{ij}) \in M_n(\bofh)$ is unitary, we will let $\mathfrak{B}_n(U)=\{I_{\cH}\} \cup \{U_{ij},U_{ij}^*\}_{i,j=1}^n$.  We define $$UC_q(n,m)=\{ (\la (X \otimes Y)\psi,\psi \ra)_{X \in \mathfrak{B}_n(U), \, Y \in \mathfrak{B}_m(V)}\},$$
where $U \in M_n(\cB(\cH_A))$ and $V \in M_m(\cB(\cH_B))$ are unitary, $\cH_A$ and $\cH_B$ are finite-dimensional Hilbert spaces, and $\psi \in \cH_A \otimes \cH_B$ is a unit vector.  We define $UC_{qs}(n,m)$ to be the set of correlations in $UC_q(n,m)$, only dropping the requirement that $\cH_A$ and $\cH_B$ be finite-dimensional.  For the commuting model, we define $$UC_{qc}(n,m)=\{ (\la XY\psi,\psi \ra)_{X \in \mathfrak{B}_n(U), \, Y \in \mathfrak{B}_m(V)}\},$$
where $U \in M_n(\bofh)$ and $V \in M_m(\bofh)$ are unitary, $\cH$ is a Hilbert space, $\psi \in \cH$ is a unit vector, and $XY=YX$ for all $X \in \mathfrak{B}_n(U)$ and $Y \in \mathfrak{B}_m(V)$.  We can also define a local model.  For local correlations, we let $UC_{loc}(n,m)$ be the set of correlations in $UC_{qc}(n,m)$ such that $C^*(\mathfrak{B}_n(U) \cup \mathfrak{B}_m(V))$ is a commutative $C^{\ast}$-algebra.

For each of the above correlation sets $UC_t(n,m)$, we will consider the smaller set $B_t(n,m)$ obtained by only considering $X \in \{U_{ij}: 1 \leq i,j \leq n\}$ and $Y \in \{V_{k\ell}:1 \leq k,\ell \leq m\}$.

To define quantum maximal unitary correlation sets, we will require a slightly different approach.  We let $$\cG_{n,m}=\{ x \otimes y: x \in \{1\} \cup \{u_{ij},u_{ij}^*\}_{i,j=1}^n, \, y \in \{1\} \cup \{v_{k\ell},v_{k\ell}^*\}_{k,\ell=1}^m \}.$$ 
We let $UC_{qmax}(n,m)$ be the set of all coordinates of the form $$(s(x))_{x \in \cG_{n,m}},$$
where $s$ is a state on $\cV_n \otimes_{\max} \cV_m$.  Similarly, we let $$B_{qmax}(n,m)=\{(s(u_{ij} \otimes v_{k\ell}))_{(i,j),(k,\ell)}: s \in \cS(\cV_n \otimes_{\max} \cV_m) \}.$$

We define $UC_{qmin}(n,m)$ to be the set of all coordinates of the form $(s(x))_{x \in \cG_{n,m}}$,
where $s$ is a state on $\cV_n \otimes_{\min} \cV_m$, and $$B_{qmin}(n,m) = \{ (s(u_{ij} \otimes v_{k\ell}))_{(i,j),(k,\ell)}: s \in \cS(\cV_n \otimes_{\min} \cV_m) \}.$$
Some of the known properties of these sets are summarized in the following theorem.  Aside from the presence of $UC_{qmax}(n,m)$, the proof of this theorem can be found in \cite{harris}.

\begin{mythe}
\emph{(Harris, \cite{harris})}
Let $n,m \geq 2$.  Then $\overline{UC_{q}(n,m)} = \overline{UC_{qs}(n,m)}= UC_{qmin}(n,m)$ and
$$UC_q(n,m) \subseteq UC_{qs}(n,m) \subseteq UC_{qmin}(n,m) \subseteq UC_{qc}(n,m) \subseteq UC_{qmax}(n,m).$$
Moreover, each of these sets is convex, and $UC_{qc}(n,m)$ is closed.
\end{mythe}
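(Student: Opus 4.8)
The plan is to run everything through the dictionary, essentially due to \cite{harris}, between unitary correlations and states on operator system tensor products of $\cV_n$. By the universal property of $\cV_n$ quoted above, a contractive operator matrix $(a_{ij})$ in a unital $C^*$-algebra $\cA$ is the same thing as a ucp map $\cV_n\to\cA$ sending $u_{ij}\mapsto a_{ij}$; extending such a map to the generated $C^*$-algebra and applying Stinespring, together with the universal property of $\cU_{nc}(n)$, shows that every ucp map $\cV_n\to\cB(\cH)$ is the compression to $\cH$ of a $*$-representation of $\cU_{nc}(n)$, so that the contraction $(a_{ij})$ is the compression of an honest unitary $(U_{ij})$ and $(a_{ij}^*)$ the compression of $(U_{ij}^*)$. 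Using this I would record the following identifications (all but the last being in \cite{harris}): $UC_{qs}(n,m)$ is the set of tuples $\big(\la(U_{ij}\otimes V_{k\ell})\xi,\xi\ra,\dots\big)_{\cG_{n,m}}$ obtained by restricting a vector state of $\cB(\cH_A)\otimes_{\min}\cB(\cH_B)=\cB(\cH_A\otimes\cH_B)$ to $\cV_n\otimes_{\min}\cV_m$; $UC_{qmin}(n,m)$ and $UC_{qmax}(n,m)$ are, by definition, the images of the state spaces $\cS(\cV_n\otimes_{\min}\cV_m)$ and $\cS(\cV_n\otimes_{\max}\cV_m)$ under coordinate evaluation; and $UC_{qc}(n,m)$ is the image of $\cS(\cV_n\otimes_c\cV_m)$ under coordinate evaluation — one direction here is the observation that two unitaries with the commutation required in the definition of $UC_{qc}$ induce ucp maps $\cV_n,\cV_m\to\cB(\cH)$ with commuting ranges, so that by the defining property of $\otimes_c$ their product map is ucp and a vector state of it is a state on $\cV_n\otimes_c\cV_m$; the converse is the commuting dilation argument of \cite{harris}.

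With the dictionary in place the inclusion chain is soft. $UC_q\subseteq UC_{qs}$ merely drops finite-dimensionality, and $UC_{qs}\subseteq UC_{qmin}$ because a vector state of $\cB(\cH_A\otimes\cH_B)$ restricts, through the functorial embedding $\cV_n\otimes_{\min}\cV_m\hookrightarrow\cB(\cH_A)\otimes_{\min}\cB(\cH_B)$, to a state of $\cV_n\otimes_{\min}\cV_m$. For the last two inclusions I use $\min\leq c\leq\max$ from \cite{KPTT}: the unital completely positive identity maps $\cV_n\otimes_c\cV_m\to\cV_n\otimes_{\min}\cV_m$ and $\cV_n\otimes_{\max}\cV_m\to\cV_n\otimes_c\cV_m$ pull a state on the target back to a state on the source with the same values on $\cV_n\otimes\cV_m$, so by the dictionary $UC_{qmin}\subseteq UC_{qc}\subseteq UC_{qmax}$; in particular the only genuinely new inclusion, $UC_{qc}\subseteq UC_{qmax}$, is immediate. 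Convexity of $UC_{qmin},UC_{qc},UC_{qmax}$ holds because each is a linear image of a convex state space, and convexity of $UC_q,UC_{qs}$ follows from the usual block-diagonal construction: replace data $(\cH_A,\cH_B,U,V,\xi)$, $(\cH_A',\cH_B',U',V',\xi')$ by $U\oplus U'$ on $\cH_A\oplus\cH_A'$, $V\oplus V'$ on $\cH_B\oplus\cH_B'$, and the unit vector $\sqrt{\lambda}\,\xi\oplus\sqrt{1-\lambda}\,\xi'$ inside the $(\cH_A\otimes\cH_B)\oplus(\cH_A'\otimes\cH_B')$ summand of $(\cH_A\oplus\cH_A')\otimes(\cH_B\oplus\cH_B')$, which produces the convex combination of the two tuples. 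Finally $UC_{qc}(n,m)$ is closed: $\cV_n$ is finite-dimensional, so $\cV_n\otimes_c\cV_m$ is a finite-dimensional operator system whose state space is weak-$*$ compact, and coordinate evaluation is linear, hence continuous, so its image is compact; the same argument shows $UC_{qmin}(n,m)$ and $UC_{qmax}(n,m)$ are closed, which is what makes the closure assertion meaningful.

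The substantive step is $\overline{UC_q(n,m)}=\overline{UC_{qs}(n,m)}=UC_{qmin}(n,m)$. Since $UC_q\subseteq UC_{qs}\subseteq UC_{qmin}$ and $UC_{qmin}$ is closed, it suffices to prove $UC_{qmin}(n,m)\subseteq\overline{UC_q(n,m)}$ and then squeeze. Given a state $s$ of $\cV_n\otimes_{\min}\cV_m$, realize $\cV_n\subseteq\cB(\cH_A)$ and $\cV_m\subseteq\cB(\cH_B)$ as complete order embeddings, so $\cV_n\otimes_{\min}\cV_m\subseteq\cB(\cH_A\otimes\cH_B)$; extend $s$ to a state of $\cB(\cH_A\otimes\cH_B)$, approximate it weak-$*$ by a normal state, and purify that normal state — enlarging $\cH_B$ to $\cH_B\otimes\cH'$ and replacing $V$ by $V\otimes I$ — to obtain a unit vector $\zeta$ with $\la(U_{ij}\otimes V_{k\ell})\zeta,\zeta\ra$ close to $s(u_{ij}\otimes v_{k\ell})$, and likewise on the remaining coordinates indexed by $\cG_{n,m}$. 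Next approximate $\zeta$ in norm by a vector $\xi'$ lying in $K_A\otimes K_B$ for finite-dimensional subspaces $K_A\subseteq\cH_A$, $K_B\subseteq\cH_B$, and replace $U,V$ by finite-dimensional unitary dilations $\tilde U$ on $\tilde K_A\supseteq K_A$, $\tilde V$ on $\tilde K_B\supseteq K_B$ of the compressions $(P_{K_A}U_{ij}P_{K_A})$, $(P_{K_B}V_{k\ell}P_{K_B})$ — unitary dilations of contractive operator matrices always exist in finite dimensions. Since $P_{K_A}\tilde U_{ij}P_{K_A}=P_{K_A}U_{ij}P_{K_A}$ and $P_{K_A}\tilde U_{ij}^*P_{K_A}=P_{K_A}U_{ij}^*P_{K_A}$, and likewise for $V$, the tuple built from $\tilde U,\tilde V$ and the unit vector $\xi'/\|\xi'\|$ agrees coordinate-for-coordinate with the one built from $U,V$ and $\xi'/\|\xi'\|$; letting $K_A,K_B$ exhaust $\cH_A,\cH_B$ makes this converge to the tuple coming from $\zeta$, so $s$'s tuple lies in $\overline{UC_q(n,m)}$. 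I expect this last argument to be the main obstacle: passing from an abstract state of the minimal tensor product, through an infinite-dimensional vector realization, down to finite-dimensional unitaries while keeping every coordinate of $\cG_{n,m}$ matched exactly. Everything else — the inclusions, convexity, closedness, and the incorporation of $UC_{qmax}$ at the top of the chain — is routine once the dictionary is set up.
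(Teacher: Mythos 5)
Your proposal is correct, and on the only point the paper actually argues — the containment $UC_{qc}(n,m)\subseteq UC_{qmax}(n,m)$ together with convexity of $UC_{qmax}(n,m)$ — you take exactly the paper's route: both sets are images of state spaces, and every state on $\cV_n\otimes_c\cV_m$ is a state on $\cV_n\otimes_{\max}\cV_m$ because $c\leq\max$. The remaining assertions the paper simply cites to \cite{harris}; your self-contained reconstruction of them (purification of a weak-$*$ approximating normal state plus finite-dimensional Halmos dilations of the compressions for $\overline{UC_q}=\overline{UC_{qs}}=UC_{qmin}$, direct sums for convexity of the spatial sets, and weak-$*$ compactness of the state space of the finite-dimensional system $\cV_n\otimes_c\cV_m$ for closedness) is sound, just more work than the paper itself carries out.
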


\begin{proof} The last containment is the only result not shown in \cite{harris}.
To show that $UC_{qc}(n,m) \subseteq UC_{qmax}(n,m)$, we use the fact that $UC_{qc}(n,m)$ corresponds to states on $\cV_n \otimes_c \cV_m$ \cite{harris}, while $UC_{qmax}(n,m)$ corresponds to states on $\cV_n \otimes_{\max} \cV_m$.  As every state on $\cV_n \otimes_c \cV_m$ is a state on $\cV_n \otimes_{\max} \cV_m$, we obtain the desired inclusion.  Since $UC_{qmax}(n,m)$ corresponds to a state space, it is clearly convex, as required.
\end{proof}

To be consistent with the notation used for probabilistic correlation sets, we set $UC_{qa}(n,m)=UC_{qmin}(n,m)$ and $B_{qa}(n,m) = B_{qmin}(n,m)$.  The link between unitary correlation sets and Connes' embedding problem can be summarized as follows:

\begin{mythe}
\emph{(Harris, \cite{harris})}
The following are equivalent.
\begin{enumerate}
\item
Connes' embedding problem has a positive answer.
\item
$UC_{qa}(n,m)=UC_{qc}(n,m)$ for all $n,m \geq 2$.
\item
$UC_{qa}(n,n)=UC_{qc}(n,n)$ for all $n \geq 2$.
\end{enumerate}
\end{mythe}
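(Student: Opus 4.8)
The plan is to pass to the operator system picture and then reduce to Ozawa's theorem in both nontrivial directions. First, recall from \cite{harris} that the coordinate functionals $s\mapsto s(x\otimes y)$, with $x\in\{1\}\cup\{u_{ij},u_{ij}^*\}_{i,j=1}^n$ and $y\in\{1\}\cup\{v_{k\ell},v_{k\ell}^*\}_{k,\ell=1}^m$, span the dual of $\cV_n\otimes\cV_m$; hence $UC_{qa}(n,m)$ and $UC_{qc}(n,m)$ are precisely the images, under coordinate evaluation, of the state spaces of $\cV_n\otimes_{\min}\cV_m$ and $\cV_n\otimes_c\cV_m$. In particular $UC_{qa}(n,m)=UC_{qc}(n,m)$ if and only if these two operator systems have the same state space. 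The implication $(2)\Rightarrow(3)$ is immediate, since $(3)$ is the case $m=n$ of $(2)$.

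For $(3)\Rightarrow(1)$, I would encode probabilistic correlations into unitary ones via self-adjoint unitaries. Given $p\in C_{qc}(n,2)$ arising from commuting PVMs $\{E^x_a\}_{a=1}^2$, $\{F^y_b\}_{b=1}^2$ on a Hilbert space $\cH$ and a unit vector $\psi$, form the block-diagonal self-adjoint unitaries $U=\bigoplus_{x=1}^n(2E^x_1-I)$ and $V=\bigoplus_{y=1}^n(2F^y_1-I)$ in $M_n(\cB(\cH))$. Since the $E$'s commute with the $F$'s and both matrices are block-diagonal, every entry of $U$ commutes with every entry of $V$, so $(U,V,\psi)$ produces a point $c\in UC_{qc}(n,n)$ each of whose coordinates is an explicit affine function of the numbers $p(a,b|x,y)$. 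Assuming $(3)$, $c\in UC_{qc}(n,n)=UC_{qa}(n,n)=\overline{UC_q(n,n)}$, so there are finite-dimensional unitaries $U^{(t)},V^{(t)}$ and unit vectors $\xi^{(t)}$ whose unitary correlations converge to $c$. From $U^{(t)}$ and $V^{(t)}$ I would build the two-outcome POVMs with first elements $M^{(t),x}_1=\tfrac14\big(U^{(t)}_{xx}+(U^{(t)}_{xx})^*+2I\big)$ and $N^{(t),y}_1=\tfrac14\big(V^{(t)}_{yy}+(V^{(t)}_{yy})^*+2I\big)$ (positive contractions), pass to Naimark dilations to obtain honest PVMs on larger finite-dimensional spaces, and thereby get $q^{(t)}\in C_q(n,2)$. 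Each entry $q^{(t)}(a,b|x,y)$ is a linear combination, with $t$-independent coefficients, of coordinates of the unitary correlation of $(U^{(t)},V^{(t)},\xi^{(t)})$, since all operators involved lie in $\mathfrak{B}_n(U^{(t)})$ and $\mathfrak{B}_n(V^{(t)})$. Passing to the limit and using that the original $U,V$ were self-adjoint, this combination collapses exactly to $p(a,b|x,y)$ (for instance $\tfrac14(U_{xx}+U_{xx}^*+2I)=E^x_1$ in the self-adjoint case, so that the relevant sum telescopes to $\tfrac14\langle(U_{xx}+I)(V_{yy}+I)\psi,\psi\rangle=\langle E^x_1F^y_1\psi,\psi\rangle$), whence $p\in\overline{C_q(n,2)}=C_{qa}(n,2)$. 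Since $n$ was arbitrary, Ozawa's theorem \cite{ozawa} (with $m=2$ fixed) yields $(1)$.

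For $(1)\Rightarrow(2)$, I would argue at the level of operator systems. By Theorem \ref{vnquotient}, $\cV_n$ is a complete quotient of the finite-dimensional operator system $M_{2n}$, and from this (this is the delicate point) one shows that $\cV_n$ has the operator system local lifting property. If Connes' embedding problem has a positive answer, then, via the standard reformulations of the embedding problem in terms of operator system tensor products -- equivalently, via Kirchberg's theorem that $C^*(\mathbb{F}_\infty)\otimes_{\min}C^*(\mathbb{F}_\infty)=C^*(\mathbb{F}_\infty)\otimes_{\max}C^*(\mathbb{F}_\infty)$; see \cite{KPTT} and \cite{FKPTwep} -- the local lifting property upgrades to the double commutant expectation property for $\cV_n$, and hence $\cV_n\otimes_{\min}\cV_m=\cV_n\otimes_c\cV_m$ completely order isomorphically for every $m$. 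In particular the two operator systems have the same state space, so $UC_{qa}(n,m)=UC_{qc}(n,m)$ for all $n,m\geq2$, which is $(2)$; combining the three implications closes the loop.

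I expect the direction $(1)\Rightarrow(2)$ to be the main obstacle. For the correlation sets only the state level is visible, whereas deducing the complete order identity $\cV_n\otimes_{\min}\cV_m=\cV_n\otimes_c\cV_m$ requires the matricial operator system machinery, and the crucial, least routine ingredient is establishing that $\cV_n$ has the local lifting property despite being a non-nuclear quotient of $M_{2n}$. A purely correlation-theoretic alternative for $(1)\Rightarrow(2)$ would instead encode a unitary correlation back into a probabilistic one, using the POVM elements $\tfrac14(U_{ij}+U_{ij}^*+2I)$ and $\tfrac14\big(\tfrac1i(U_{ij}-U_{ij}^*)+2I\big)$ to recover real and imaginary parts of the coordinates; there the delicate point becomes a stability statement, namely that the approximating $C_q$-correlations can be perturbed so as to come from genuine unitary matrices.
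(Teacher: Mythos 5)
The paper itself gives no proof of this statement: it is quoted from \cite{harris} (and later, in Theorem \ref{connescrossnorms}, only the equivalence of (1) and (2) is cited, as Theorem 6.12 of \cite{harris}), so your proposal has to be measured against the cited source and against the paper's related internal machinery. Your reduction to comparing the state spaces of $\cV_n\otimes_{\min}\cV_m$ and $\cV_n\otimes_c\cV_m$ is the right starting point (it does lean on the identification, itself from \cite{harris}, of $UC_{qc}$ with states on $\cV_n\otimes_c\cV_m$), and your $(3)\Rightarrow(1)$ is correct and genuinely different from what appears here: encoding two-outcome commuting PVMs into block-diagonal self-adjoint unitaries, using $(3)$ to approximate by finite-dimensional unitary correlations, recovering binary POVMs from $\tfrac{1}{4}\bigl(U_{xx}+U_{xx}^*+2I\bigr)$, Naimark dilating, and invoking Ozawa's criterion with $m=2$ fixed all check out, since $UC_{qa}=\overline{UC_q}$, finite-dimensional POVM correlations lie in $C_q(n,2)$, and the fixed affine combination of coordinates collapses to $p(a,b|x,y)$ in the limit by self-adjointness. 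By contrast, the route the present paper uses for its refinement (Proposition \ref{correlationorderiso} together with Theorems \ref{tracesandstatesonmax} and \ref{satisfyingconnes}) derives CEP from the weaker hypothesis $B_{qa}(n,n)=B_{qc}(n,n)$ via tracial states on $\cX_\infty$ and $\cY_\infty$; your argument uses the marginal and adjoint coordinates of the full $UC$ sets, so it proves the quoted theorem but would not yield that strengthening.

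For $(1)\Rightarrow(2)$ you follow essentially the same Kirchberg-type strategy as \cite{harris}, but two remarks. First, the step you single out as the main obstacle, the OSLLP of $\cV_n$, is actually the easy part: a ucp map $\cV_n\to A/I$ sends $(u_{ij})$ to a contraction, contractions lift completely through $C^{\ast}$-algebra quotients, and the universal property of $\cV_n$ plus the fact that $\{1,u_{ij},u_{ij}^*\}$ is a basis shows the resulting ucp map is a genuine lift. The weight is carried instead by the operator-system Kirchberg machinery you invoke only loosely: that OSLLP is equivalent to $\cS\otimes_{\min}\cB(\cH)=\cS\otimes_{\max}\cB(\cH)$, that CEP upgrades OSLLP to DCEP, and that OSLLP together with DCEP forces $\min=c$; these are in \cite{quotients} and \cite{kavruk}, and are what \cite{harris} in effect uses (there in the form of LLP/WEP for $\cU_{nc}(n)$ together with the complete order embedding of $\cV_n\otimes_c\cV_m$ into $\cU_{nc}(n)\otimes_{\max}\cU_{nc}(m)$). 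Second, the identity $C^*(F_\infty)\otimes_{\min}C^*(F_\infty)=C^*(F_\infty)\otimes_{\max}C^*(F_\infty)$ is Kirchberg's conjecture, equivalent to CEP, not an unconditional theorem; as written your sentence blurs this, though your argument only uses it under hypothesis (1), so the logic survives once the citation is stated correctly. With those references made precise, your outline is sound.
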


A simple but crucial observation is that for $t_1,t_2 \in \{loc,qa,qc,qmax\}$, if $UC_{t_1}(n,m)=UC_{t_2}(n,m)$, then $B_{t_1}(n,m)=B_{t_2}(n,m)$.  Hence, one way to separate $UC_{t_1}(n,m)$ and $UC_{t_2}(n,m)$ is by separating the sets $B_{t_1}(n,m)$ and $B_{t_2}(n,m)$.  We will see that, for Connes' embedding problem, it suffices to consider the sets $B_t(n,m)$.  Moreover, the sets $B_t(n,m)$ for $t \in \{loc,qa,qc,qmax\}$ have a very special structure, as seen below.

\begin{mythe}
\label{correlationnorms}
For $t \in \{loc,qa,qc,qmax\}$, the set $B_t(n,m)$ is the unit ball of a norm $\| \cdot \|_t$ on $M_n \otimes M_m$.  Moreover, $\| \cdot \|_{loc}$ is the norm arising from the projective Banach space tensor product $M_n \otimes_{\pi} M_m$.
\end{mythe}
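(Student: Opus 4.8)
The plan is to separate the case $t=loc$, prove it directly by identifying $B_{loc}(n,m)$ with the closed unit ball of $M_n\otimes_{\pi}M_m$, and then deduce the other three cases by a Minkowski-functional argument that is bootstrapped from the inclusion $B_{loc}(n,m)\subseteq B_t(n,m)$. Throughout I identify a correlation matrix $(z_{(ij)(k\ell)})$ with $\sum_{i,j,k,\ell}z_{(ij)(k\ell)}\,E_{ij}\otimes E_{k\ell}\in M_n\otimes M_m$; in particular, for scalar unitaries $U\in M_n$ and $V\in M_m$, the correlation matrix with entries $U_{ij}V_{k\ell}$ is the elementary tensor $U\otimes V$.

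For $t=loc$: unwinding the definition, a correlation in $UC_{loc}(n,m)$ is computed from a commutative $C^*$-algebra $\mathcal{C}\cong C(X)$ with $X$ compact Hausdorff, unitaries $U\in M_n(C(X))$ and $V\in M_m(C(X))$ --- that is, continuous maps $U\colon X\to U(n)$ and $V\colon X\to U(m)$ --- together with a vector state, i.e.\ a probability measure $\mu$ on $X$; its $B$-part is then $\int_X U(x)\otimes V(x)\,d\mu(x)$. Conversely, taking $X=U(n)\times U(m)$ with $\mathcal{C}=C(X)$ acting by multiplication on $L^2(X,\mu)$ and unit vector $\mathbf{1}$ shows that every barycentre $\int U_0\otimes V_0\,d\mu$ of a probability measure $\mu$ on the compact set $K=\{U_0\otimes V_0:U_0\in U(n),\,V_0\in U(m)\}$ is attained. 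Since barycentres of probability measures on a compact subset of a finite-dimensional space fill out exactly its convex hull, this gives $B_{loc}(n,m)=\operatorname{conv}(K)$, which is already compact. By the Russo--Dye theorem the closed unit ball of $M_n$ equals $\overline{\operatorname{conv}}\,U(n)$, and likewise for $M_m$; hence $\operatorname{conv}(K)=\overline{\operatorname{conv}}(B_{M_n}\otimes B_{M_m})$, which is the closed unit ball of $M_n\otimes_{\pi}M_m$ by \cite{ryan}. This proves the statement for $t=loc$ together with the ``moreover'' clause, and shows $0\in B_{loc}(n,m)$ and that $B_{loc}(n,m)$ linearly spans $M_n\otimes M_m$ (being the unit ball of a norm on it).

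For $t\in\{qa,qc,qmax\}$: by definition of $UC_{qmin}$ and $UC_{qmax}$, and by the identification of $UC_{qc}$ with the state space of $\cV_n\otimes_c\cV_m$ from \cite{harris}, the set $B_t(n,m)$ is the image of $\cS(\cV_n\otimes_{\alpha}\cV_m)$ under the linear map $s\mapsto(s(u_{ij}\otimes v_{k\ell}))$, where $\alpha$ is $\min$, $c$, or $\max$ according as $t$ is $qa$, $qc$, or $qmax$. Since the state space of a finite-dimensional operator system is compact and convex, $B_t(n,m)$ is compact and convex. It is also circled: for $\lambda\in\bC$ with $|\lambda|=1$ the matrix $(\lambda u_{ij})=\lambda U$ is unitary in $M_n(\cU_{nc}(n))$, so the universal property of $\cU_{nc}(n)$ yields a $*$-automorphism $\Phi_{\lambda}$ with $\Phi_{\lambda}(u_{ij})=\lambda u_{ij}$; its restriction to $\cV_n$ is a unital complete order automorphism, so by functoriality of $\alpha$ the map $\Phi_{\lambda}\otimes\id_{\cV_m}$ is a complete order automorphism of $\cV_n\otimes_{\alpha}\cV_m$, and hence $s\circ(\Phi_{\lambda}\otimes\id_{\cV_m})$ is a state whose $B$-part is $\lambda$ times that of $s$. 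Combining this with convexity and with $0\in B_{loc}(n,m)\subseteq B_t(n,m)$ gives $\mu Z\in B_t(n,m)$ for all $|\mu|\le 1$. Finally, since $B_{loc}(n,m)\subseteq B_{qa}(n,m)\subseteq B_{qc}(n,m)\subseteq B_{qmax}(n,m)$ --- the first inclusion because $UC_{loc}(n,m)\subseteq\overline{UC_q(n,m)}=UC_{qa}(n,m)$ (using convexity of $UC_q(n,m)$ and that scalar correlations lie in $UC_q(n,m)$), the remaining ones from $\min\le c\le\max$ --- the set $B_t(n,m)$ linearly spans $M_n\otimes M_m$, and being also compact, convex and circled it is an absorbing neighbourhood of $0$. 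Its Minkowski functional $\|Z\|_t:=\inf\{r>0:Z\in rB_t(n,m)\}$ is therefore a norm on $M_n\otimes M_m$ (absolute homogeneity from the circled property, the triangle inequality from convexity, positive-definiteness from compactness), and because $B_t(n,m)$ is closed, convex, circled and a neighbourhood of $0$, it is precisely the closed unit ball of $\|\cdot\|_t$.

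The part requiring genuine care is the identification of $B_{loc}(n,m)$ with $\operatorname{conv}(K)$, and thence with the $\pi$-unit ball: once that is in hand, the ``absorbing'' property --- the only thing that could make the Minkowski functional infinite --- is free for every $B_t(n,m)$, and the remaining ingredients (compactness and convexity of state spaces of finite-dimensional operator systems, and the circled property coming from the $*$-automorphisms $u_{ij}\mapsto(WU)_{ij}$ of $\cU_{nc}(n)$ attached to scalar unitaries $W$, which survive tensoring by functoriality) are routine. The one subtle point in the $loc$ step is checking that $\int_X U(x)\otimes V(x)\,d\mu(x)$ genuinely sweeps out all of $\operatorname{conv}(K)$, i.e.\ the barycentre description of the convex hull of a compact set in finite dimensions.
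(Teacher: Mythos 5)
Your proof is correct and follows essentially the same route as the paper: identify $B_{loc}(n,m)$ with the closed unit ball of $M_n\otimes_{\pi}M_m$ via the Russo--Dye fact that contractions are convex combinations of unitaries together with Ryan's description of the projective unit ball, and then use the inclusion $B_{loc}(n,m)\subseteq B_t(n,m)$ plus compactness and convexity of the state-space images to conclude that each $B_t(n,m)$ is the unit ball of a norm. Your explicit verification of the circled property via the gauge automorphisms $u_{ij}\mapsto \lambda u_{ij}$ and the Minkowski-functional bookkeeping fill in details the paper's proof leaves implicit, but the underlying argument is the same.
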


\begin{proof}
As each set $B_t(n,m)$ corresponds to images of states, it is easy to see that $B_t(n,m)$ is convex.  Since $0$ is a contraction in $M_n$, there is a state $\eta_n:\cV_n \to \bC$ with $\eta(u_{ij})=0$ for all $i,j$.  By functoriality of the min tensor product, $\eta_n \otimes \eta_m:\cV_n \otimes_{\min} \cV_m \to \bC \otimes \bC=\bC$ is a state, which corresponds to the matrix $0 \in M_{nm}$.  Each entry of a matrix in $B_t(n,m)$ must have modulus at most $1$, so the set $B_t(n,m)$ is clearly compact in $M_{nm}$.  It remains to show that $0$ is an interior point in $B_t(n,m)$.  Since $B_{loc}(n,m)$ is the smallest of the correlation sets, it suffices to prove that $0$ is an interior point for $B_{loc}(n,m)$.  Since $\bC$ is a commutative $C^{\ast}$-algebra, any pair of unitary matrices $X \in M_n$ and $Y \in M_m$ satisfies $X \otimes Y \in B_{loc}(n,m)$.  Using the fact that the convex hull of the unitaries in $M_n$ is the unit ball of the operator norm in $M_n$, we see that $\{ X \otimes Y \in M_n \otimes M_m: \|X\|,\|Y\| \leq 1 \} \subseteq B_{loc}(n,m)$.  It is well-known that the closed convex hull of the former set is the unit ball of the projective Banach space tensor product norm \cite[Proposition 2.2]{ryan}; hence, it follows that $0$ is an interior point for $B_{loc}(n,m)$.  Therefore, each $B_t(n,m)$ is the unit ball of a norm $\| \cdot \|_t$ on $M_{nm}$.

It remains to show that $\| \cdot \|_{loc}=\| \cdot \|_{\pi}$.  To this end, let $\cA$ be a unital, commutative $C^{\ast}$-algebra, and let $U \in M_n(\cA)$ and $V \in M_m(\cA)$ be unitary.  Note that $\cA \simeq C(X)$ for some compact Hausdorff space $X$, so that the extreme points of $\cS(\cA)$ are just the evalation functionals $\{\delta_x: x \in X \}$.  The matrix in $B_{loc}(n,m)$ arising from one of these states corresponding to $U$ and $V$ is $(\delta_x(u_{ij}v_{k\ell}))=(\delta_x(u_{ij})\delta_x(v_{k\ell}))$.  Note that $(\delta_x(u_{ij}))$ and $(\delta_v(v_{k\ell}))$ are contractions in $M_n$ and $M_m$ respectively, so that $(\delta_x(u_{ij})\delta_x(v_{k\ell}))$ is of the form $A \otimes B$ where $A \in M_n$ and $B \in M_m$ are contractions.  Taking the closed convex hull of the pure states on $C(X)$, we see that every element of $B_{loc}(n,m)$ is in the closed convex hull of $\{A \otimes B: A \in M_n, \, B \in M_m, \, \|A\| \leq 1, \, \|B\| \leq 1\}$.  This shows that $\| \cdot \|_{loc}$ is the projective Banach space tensor norm on $M_n \otimes M_m$, as desired.
\end{proof}

We will see later that if $t \neq qmax$, then $\| \cdot\|_t$ cannot be unitarily invariant.  However, all of these norms satisfy a weaker condition.

\begin{pro}
\label{locallyunitarilyinvariant}
For $t \in \{loc,qa,qc\}$, the norm $\| \cdot \|_t$ is locally unitarily invariant on $M_n \otimes M_m$; i.e., for any unitaries $U_1,U_2 \in M_n$, unitaries $V_1,V_2 \in M_m$ and $X \in M_n \otimes M_m$, we have $$\|(U_1 \otimes V_1)X(U_2 \otimes V_2)\|_t=\|X\|_t.$$
\end{pro}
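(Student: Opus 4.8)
The plan is to fix unitaries $U_1,U_2\in M_n$ and $V_1,V_2\in M_m$ and to show that the linear map $L:M_n\otimes M_m\to M_n\otimes M_m$ given by $L(X)=(U_1\otimes V_1)X(U_2\otimes V_2)$ carries the unit ball $B_t(n,m)$ of $\|\cdot\|_t$ (Theorem \ref{correlationnorms}) into itself. Since $L$ is invertible with $L^{-1}(X)=(U_1^*\otimes V_1^*)X(U_2^*\otimes V_2^*)$ of exactly the same form, applying the inclusion to both $L$ and $L^{-1}$ and using that $B_t(n,m)$ is a closed unit ball yields $\|L(X)\|_t\le\|X\|_t=\|L^{-1}L(X)\|_t\le\|L(X)\|_t$, i.e. $L$ is a $\|\cdot\|_t$-isometry, which is precisely the claim.

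For $t=qc$ (and $t=loc$) I would argue directly with representations. If $X\in B_{qc}(n,m)$ is realized in the commuting model by a unitary $U=(U_{ij})\in M_n(\bofh)$, a unitary $V=(V_{k\ell})\in M_m(\bofh)$ with $\mathfrak{B}_n(U)$ and $\mathfrak{B}_m(V)$ commuting, and a unit vector $\psi$, I would set $\widetilde U=(U_1\otimes I_{\cH})U(U_2\otimes I_{\cH})$ and $\widetilde V=(V_1\otimes I_{\cH})V(V_2\otimes I_{\cH})$. These are again unitary, and their entries $\widetilde U_{ij}=\sum_{p,q}(U_1)_{ip}(U_2)_{qj}U_{pq}$ and $\widetilde V_{k\ell}=\sum_{r,s}(V_1)_{kr}(V_2)_{s\ell}V_{rs}$ are linear combinations of the original entries, so $\mathfrak{B}_n(\widetilde U)$ and $\mathfrak{B}_m(\widetilde V)$ still commute, and in the $loc$ case $C^*(\mathfrak{B}_n(\widetilde U)\cup\mathfrak{B}_m(\widetilde V))=C^*(\mathfrak{B}_n(U)\cup\mathfrak{B}_m(V))$ is still commutative. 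The routine bookkeeping step, which I would state but not belabor, is that, using the identity $\sum_{i,j}(U_1)_{ip}(U_2)_{qj}E_{ij}=U_1E_{pq}U_2$ in $M_n$, the correlation matrix of $(\widetilde U,\widetilde V,\psi)$ equals $\sum_{p,q,r,s}\la U_{pq}V_{rs}\psi,\psi\ra\,(U_1E_{pq}U_2)\otimes(V_1E_{rs}V_2)=L(X)$. Hence $L(B_{qc}(n,m))\subseteq B_{qc}(n,m)$, and the identical argument with the commutativity condition gives $L(B_{loc}(n,m))\subseteq B_{loc}(n,m)$; for $loc$ one may alternatively simply quote $\|\cdot\|_{loc}=\|\cdot\|_\pi$ from Theorem \ref{correlationnorms} together with the functoriality of the projective Banach space tensor norm.

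For $t=qa$ there is no concrete state-vector model, so I would route the argument through operator systems. By the universal property of $\cV_n$ there is a ucp map $\alpha:\cV_n\to\cV_n$ with $\alpha(u_{ij})=\sum_{p,q}(U_1)_{ip}(U_2)_{qj}u_{pq}$, since this assignment makes $(\alpha(u_{ij}))=U_1(u_{ij})U_2$ a unitary, hence a contraction, in $M_n(\cV_n)$; the map defined the same way from $U_1^*,U_2^*$ inverts $\alpha$ on the spanning set $\{1,u_{ij},u_{ij}^*\}$, so $\alpha$ is a complete order automorphism of $\cV_n$. Let $\beta$ be the analogous automorphism of $\cV_m$ built from $V_1,V_2$. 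By functoriality of $\otimes_{\min}$, $\alpha\otimes\beta$ is a complete order automorphism of $\cV_n\otimes_{\min}\cV_m$, so $s\mapsto s\circ(\alpha\otimes\beta)$ is a bijection of $\cS(\cV_n\otimes_{\min}\cV_m)$; tracking the generators exactly as above, this corresponds on $B_{qa}(n,m)=B_{qmin}(n,m)$ precisely to $X\mapsto L(X)$, so $L(B_{qa}(n,m))\subseteq B_{qa}(n,m)$. (Running the same operator-system argument with $\otimes_c$ in place of $\otimes_{\min}$, using the identification of $UC_{qc}(n,m)$ with $\cS(\cV_n\otimes_c\cV_m)$, re-proves the $qc$ case as well.) The only genuine obstacle is the index bookkeeping identifying the transformed correlation matrix with $L(X)$, together with the small check in the $qa$ case that $\alpha$ really lands in $\cV_n$ and is invertible; everything else is formal.
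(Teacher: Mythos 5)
Your proposal is correct, and for $t=loc$ and $t=qc$ it is essentially the paper's argument: the paper also realizes $X\in B_{qc}(n,m)$ by commuting unitaries $U,V$ and a vector $\psi$, replaces $U$ by $U(\alpha\otimes I)$ for a scalar unitary $\alpha$ (doing one factor at a time and noting "the rest of the cases will follow"), and observes that commutativity, respectively commutativity of the generated $C^{\ast}$-algebra, is preserved; your version with $(U_1\otimes I)U(U_2\otimes I)$ and the identity $\sum_{i,j}(U_1)_{ip}(U_2)_{qj}E_{ij}=U_1E_{pq}U_2$ is the same computation done for all four unitaries simultaneously. Where you genuinely diverge is the $qa$ case: the paper first proves invariance of $B_{qs}(n,m)$ using tensor-product representations and then obtains $B_{qa}(n,m)$ by a limiting argument (continuity of matrix multiplication, together with the identification of $B_{qa}$ with the closure of $B_{qs}$), whereas you work directly with $B_{qa}(n,m)=B_{qmin}(n,m)$ as images of states on $\cV_n\otimes_{\min}\cV_m$, build complete order automorphisms $\alpha,\beta$ of $\cV_n,\cV_m$ from the universal property, and use functoriality of $\otimes_{\min}$ so that precomposition of states with $\alpha\otimes\beta$ implements $X\mapsto(U_1\otimes V_1)X(U_2\otimes V_2)$. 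This buys you a proof that works verbatim for $\min$, $c$ (and in fact $\max$) without any approximation step or appeal to $\overline{B_{qs}}=B_{qa}$, at the cost of the small verification that $\alpha$ maps $\cV_n$ into $\cV_n$ and is invertible, which you correctly supply; the paper's route is more concrete but tied to the spatial models. Both the bookkeeping identifying the transformed correlation matrix with $(U_1\otimes V_1)X(U_2\otimes V_2)$ and the formal step that $L(B_t)\subseteq B_t$ together with $L^{-1}(B_t)\subseteq B_t$ forces $\|\cdot\|_t$-isometry are sound.
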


\begin{proof}
First, let $s$ be a state on $\cV_n \otimes_c \cV_m$.  Then there is a Hilbert space $\cH$, unitaries $U=(U_{ij}) \in \cB(\bC^n \otimes \cH)$ and $V=(V_{k\ell}) \in \cB(\cH \otimes \bC^m)$, and a unit vector $\psi \in \cH$ such that $s(u_{ij} \otimes v_{k\ell})=\la U_{ij}V_{k\ell}\psi,\psi \ra$.  Let $X=(s(u_{ij} \otimes v_{k\ell}))_{(i,j),(k,\ell)} \in M_n \otimes M_m$.  We will show that $X[(\alpha_{ij}) \otimes I] \in UC_{qc}(n,m)$ whenever $(\alpha_{ij})$ is a unitary matrix in $M_n$; the rest of the cases will follow.  Define $\what{U}_{ij}=\sum_{p=1}^n U_{ip} \alpha_{p j}$.  Then $(\what{U}_{ij})=U(\alpha_{ij})$ is unitary, and $\what{U}_{ij}V_{k\ell}=V_{k\ell} \what{U}_{ij}$.  It follows that $$X((\alpha_{ij}) \otimes I)=(\la U_{ij}V_{k\ell} \psi,\psi \ra)(\alpha_{ij} \otimes I)=(\la \what{U}_{ij}V_{k\ell}\psi,\psi \ra) \in UC_{qc}(n,m).$$
If the entries of $U$ and $V$ generate a commutative $C^{\ast}$-algebra, then the same is true for the entries of $\what{U}=(\what{U}_{ij})$ and $V$, so that $B_{loc}(n,m)$ is locally unitarily invariant.  If we assume that $X \in B_{qs}(n,m)$, then $s(u_{ij} \otimes v_{k\ell})$ can be written as $\la (U_{ij} \otimes V_{k\ell})\psi,\psi \ra$, where $U=(U_{ij}) \in M_n(\cB(\cH_A))$ and $V=(V_{k\ell}) \in M_m(\cB(\cH_B))$ are unitary, $\cH_A$ and $\cH_B$ are Hilbert spaces, and $\psi \in \cH_A \otimes \cH_B$ is a unit vector.  Applying the same approach as above, the matrix $X((\alpha_{ij}) \otimes I)$ arises from a state induced by a tensor product of representations, so that $X((\alpha_{ij}) \otimes I) \in B_{qs}(n,m)$.  Therefore, the set $B_{qs}(n,m)$ is also locally unitarily invariant.  The fact that $B_{qa}(n,m)$ is locally unitarily invariant follows by taking limits and using the fact that matrix multiplication is continuous in any norm topology on $M_{nm}$.
\end{proof}

Like the norm $\| \cdot \|_{loc}$, each of the norms $\| \cdot \|_t$ must be a reasonable cross-norm.

\begin{mythe}
\label{crossnorm}
For $t \in \{loc,qa,qc,qmax\}$, $\| \cdot \|_t$ is a reasonable cross-norm on $M_n \otimes M_m$.  Moreover, if $\| \cdot \|$ denotes the operator norm on $M_{nm}$, then $\| \cdot \| \leq \| \cdot \|_t$.
\end{mythe}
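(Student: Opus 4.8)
The plan is to establish the two-sided estimate $\|\cdot\|_{\ee} \leq \|\cdot\|_t \leq \|\cdot\|_{\pi}$ characterizing reasonable cross-norms \cite[Proposition 6.1]{ryan}, together with the sharper lower bound $\|\cdot\| \leq \|\cdot\|_t$. Both claims reduce, via the containments $B_{loc}(n,m) \subseteq B_t(n,m) \subseteq B_{qmax}(n,m)$ (already used in the proof of Theorem \ref{correlationnorms}), to understanding the two extreme members of the chain. For the upper bound, $B_{loc}(n,m) \subseteq B_t(n,m)$ gives $\|\cdot\|_t \leq \|\cdot\|_{loc}$, and $\|\cdot\|_{loc} = \|\cdot\|_{\pi}$ by Theorem \ref{correlationnorms}. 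For the lower bound, since the operator norm on $M_{nm} = M_n \otimes M_m$ is itself a reasonable cross-norm we have $\|\cdot\|_{\ee} \leq \|\cdot\|$, so it suffices to prove $\|\cdot\| \leq \|\cdot\|_{qmax}$, i.e. that every matrix in $B_{qmax}(n,m)$ is an operator-norm contraction; this then yields $\|\cdot\|_{\ee} \leq \|\cdot\| \leq \|\cdot\|_{qmax} \leq \|\cdot\|_t \leq \|\cdot\|_{\pi}$ for all four values of $t$, which is both the cross-norm property and the ``moreover''.

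The heart of the matter is therefore the following: if $s$ is a state on $\cV_n \otimes_{\max} \cV_m$ and $X = (s(u_{ij} \otimes v_{k\ell}))_{(i,j),(k,\ell)} \in M_n \otimes M_m$, then $\|X\| \leq 1$. I would compute the matrix coefficients of $X$ directly. For unit vectors $\xi, \eta \in \bC^n \otimes \bC^m$, writing $\xi_{j\ell}$, $\eta_{ik}$ for their coordinates in the standard basis, linearity of $s$ gives $\la X \xi, \eta \ra = s(z_{\xi,\eta})$, where
$$z_{\xi,\eta} = \sum_{i,j,k,\ell} \overline{\eta_{ik}} \, \xi_{j\ell} \, u_{ij} \otimes v_{k\ell} \in \cV_n \otimes \cV_m.$$
Since states on an operator system are contractive, it is enough to show $\|z_{\xi,\eta}\|_{\cV_n \otimes_{\max} \cV_m} \leq 1$ when $\|\xi\| = \|\eta\| = 1$, equivalently that $\begin{pmatrix} 1\otimes1 & z_{\xi,\eta} \\ z_{\xi,\eta}^* & 1\otimes1 \end{pmatrix}$ is positive in $M_2(\cV_n \otimes_{\max} \cV_m)$.

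To produce this positivity I would feed the ``unitary dilation'' blocks of $\cV_n$ and $\cV_m$ into the definition of the maximal operator system tensor product. With $U = (u_{ij}) \in M_n(\cV_n)$ and $V = (v_{k\ell}) \in M_m(\cV_m)$ the generating unitaries, set
$$S = \begin{pmatrix} I_n & U \\ U^* & I_n \end{pmatrix} \in M_{2n}(\cV_n)_+, \qquad T = \begin{pmatrix} I_m & V \\ V^* & I_m \end{pmatrix} \in M_{2m}(\cV_m)_+,$$
which are positive precisely because $U$ and $V$ are unitary. Then $S \otimes T$ is positive in $M_{4nm}(\cV_n \otimes_{\max} \cV_m)$ by the defining property that $P \otimes Q$ is positive whenever $P, Q$ are. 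Identifying $\bC^{2n} \otimes \bC^{2m}$ with the direct sum of four copies of $\bC^n \otimes \bC^m$ (indexed by the two summands of $\bC^{2n}$ and the two summands of $\bC^{2m}$), let $A : \bC^2 \to \bC^{2n} \otimes \bC^{2m}$ be the scalar matrix whose first column is $\eta$ placed in the $(1,1)$ copy and whose second column is $\xi$ placed in the $(2,2)$ copy. A block computation then shows that the diagonal entries of $A^*(S \otimes T) A$ are $\|\eta\|^2 (1\otimes1) = 1\otimes1$ and $\|\xi\|^2(1\otimes1) = 1\otimes1$ (these come from the diagonal corners $I_n$ of $S$ and $I_m$ of $T$), while the off-diagonal entries are $z_{\xi,\eta}$ and $z_{\xi,\eta}^*$ (these come from the off-diagonal corners $U$ of $S$ and $V$ of $T$). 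Since conjugation by a scalar matrix is completely positive, $A^*(S \otimes T)A$ is positive in $M_2(\cV_n \otimes_{\max} \cV_m)$, which is exactly what was needed; hence $\|z_{\xi,\eta}\| \leq 1$ and $\|X\| \leq 1$.

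I expect the main obstacle to be this last construction — spotting that the matrices $S$, $T$ and the placement of $\xi$, $\eta$ inside $\bC^{2n} \otimes \bC^{2m}$ reproduce $z_{\xi,\eta}$ through the definition of $\otimes_{\max}$. By contrast, the more naive route of pulling $s$ back to a state on $M_{2n} \otimes M_{2m} = M_{4nm}$ via the complete quotient map of Corollary \ref{vnvmquotient} and expanding it in vector states does not work: the triangle inequality over such an expansion only gives a bound of order $nm$, so one is forced to argue intrinsically inside the maximal operator system tensor product. The remaining ingredients — the containment $B_{loc}(n,m) \subseteq B_t(n,m)$, the identity $\|\cdot\|_{loc} = \|\cdot\|_{\pi}$, and the fact that the operator norm on $M_{nm}$ is a reasonable cross-norm — are routine or already recorded.
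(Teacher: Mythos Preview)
Your proof is correct, and its overall architecture matches the paper's: reduce the upper bound to $\|\cdot\|_{loc}=\|\cdot\|_{\pi}$ via the containment $B_{loc}(n,m)\subseteq B_t(n,m)$, and reduce the lower bound to showing that every $X\in B_{qmax}(n,m)$ is an operator-norm contraction. The difference is in how you establish that last contraction. The paper dispatches it in one line by invoking \cite[Proposition~3.4]{KPTT}, which says every operator system tensor product is an operator space tensor product; since $\|(u_{ij})\|=1$ and $\|(v_{k\ell})\|=1$, this gives $\|(u_{ij}\otimes v_{k\ell})\|\leq 1$ in $M_{nm}(\cV_n\otimes_{\max}\cV_m)$, and then complete contractivity of the state $s$ yields $\|X\|=\|s^{(nm)}((u_{ij}\otimes v_{k\ell}))\|\leq 1$. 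Your route instead works one matrix coefficient at a time: you compress $S\otimes T$ by the scalar vectors built from $\eta$ and $\xi$ to exhibit the $2\times 2$ positivity that certifies $\|z_{\xi,\eta}\|\leq 1$, and then only need ordinary (not complete) contractivity of $s$. In effect you are reproving, by hand and at the scalar level, the special case of the KPTT operator-space fact that the paper cites. Your argument is longer but more self-contained; the paper's is shorter but leans on an external structural result.
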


\begin{proof}
Since $B_{loc}(n,m) \subseteq B_t(n,m)$ for $t \in \{qa,qc,qmax\}$, we know that $\|X \otimes Y\|_t \leq 1$ whenever $X \in M_n$ and $Y \in M_m$ satisfy $\|X\| \leq 1$ and $\|Y\| \leq 1$.  Hence, $\| \cdot \|_t$ is a cross-norm.  Once we show that $\| \cdot \| \leq \| \cdot \|_t$, we will have $\| \cdot \|_{\ee} \leq \| \cdot \|_t \leq \| \cdot \|_{\pi}$, where $\| \cdot \|_{\ee}$ is the injective Banach space tensor norm, which shows that $\| \cdot \|_t$ is a reasonable cross-norm.  To see that $\| \cdot \|_t \geq \| \cdot \|$, we need only show that $\| \cdot \| \leq \| \cdot \|_{qmax}$, since $\| \cdot \|_{qmax}$ defines the smallest $\| \cdot \|_t$.  Let $X \in B_{qmax}(n,m)$; then there is a state $s \in \cS(\cV_n \otimes_{\max} \cV_m)$ with $X=(s(u_{ij} \otimes v_{k\ell}))_{(i,j),(k,\ell)}$.  Any operator system tensor product is an operator space tensor product \cite[Proposition 3.4]{KPTT}.  Since $\|(u_{ij})\|=1$ and $\|(v_{k\ell})\|=1$, we must have $\|(u_{ij} \otimes v_{k\ell})\|=1$ in $M_{nm}(\cV_n \otimes_{\max} \cV_m)$.  Since $s$ is completely contractive, we see that $\|X\| \leq 1$ in $M_{nm}$, and the result follows.
\end{proof}

The lower bound in Theorem \ref{crossnorm} is attained by the norm arising from $B_{qmax}(n,m)$.

\begin{mythe}
\label{qmax}
For $n,m \geq 2$, the norm $\| \cdot \|_{qmax}$ with unit ball equal to $B_{qmax}(n,m)$ is the operator norm on $M_{nm}$.  In other words, $$B_{qmax}(n,m)=\{ X \in M_{nm}: \|X\| \leq 1 \}.$$
\end{mythe}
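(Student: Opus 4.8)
The plan is to prove the two inclusions separately. One of them, $B_{qmax}(n,m)\subseteq\{X\in M_{nm}:\|X\|\le 1\}$, is already available: it is precisely the inequality $\|\cdot\|\le\|\cdot\|_{qmax}$ recorded in Theorem~\ref{crossnorm}. So the substance is the reverse inclusion --- that every operator-norm contraction $X\in M_n\otimes M_m$ arises as $\bigl(s(u_{ij}\otimes v_{k\ell})\bigr)_{(i,j),(k,\ell)}$ for some state $s$ on $\cV_n\otimes_{\max}\cV_m$.

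To realize such an $X$ I would lift to $M_{2n}\otimes M_{2m}$ via Corollary~\ref{vnvmquotient}. Since $\varphi_n\otimes\varphi_m\colon M_{2n}\otimes M_{2m}\to\cV_n\otimes_{\max}\cV_m$ is a complete quotient map with kernel $\cJ=\cJ_{2n}\otimes M_{2m}+M_{2n}\otimes\cJ_{2m}$, the target is completely order isomorphic to $M_{2n}\otimes M_{2m}/\cJ$, and hence (by Proposition~\ref{firstiso} together with the definition of the quotient order) states on $\cV_n\otimes_{\max}\cV_m$ correspond bijectively to positive unital functionals $t$ on $M_{2n}\otimes M_{2m}$ vanishing on $\cJ$, via $t=s\circ(\varphi_n\otimes\varphi_m)$. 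The formula for $\varphi_n$ in Theorem~\ref{vnquotient} gives $u_{ij}\otimes v_{k\ell}=4nm\,(\varphi_n\otimes\varphi_m)(E_{i,j+n}\otimes E_{k,\ell+m})$, hence $s(u_{ij}\otimes v_{k\ell})=4nm\,t(E_{i,j+n}\otimes E_{k,\ell+m})$. Thus it suffices to produce a density matrix $\rho$ on $\bC^{2n}\otimes\bC^{2m}$ whose state $t=\Tr(\rho\,\cdot)$ kills $\cJ$ and satisfies $4nm\,t(E_{i,j+n}\otimes E_{k,\ell+m})=X_{(i,k),(j,\ell)}$.

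For the construction, write $\bC^{2n}=\bC^n\oplus\bC^n$ and $\bC^{2m}=\bC^m\oplus\bC^m$, so that $\bC^{2n}\otimes\bC^{2m}$ is the direct sum of four copies $K_{ab}$ ($a,b\in\{1,2\}$) of $\bC^{nm}$, where $K_{ab}$ is the $a$-th summand of $\bC^{2n}$ tensored with the $b$-th summand of $\bC^{2m}$. I would then take, as a $4\times4$ operator matrix in the ordering $(K_{11},K_{12},K_{21},K_{22})$,
\[
\rho=\frac1{4nm}\begin{pmatrix} I & 0 & 0 & X\\ 0 & I & 0 & 0\\ 0 & 0 & I & 0\\ X^{*} & 0 & 0 & I\end{pmatrix},
\]
all blocks in $M_{nm}$. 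This works because: $\Tr\rho=1$; after permuting the middle two summands away, $\rho$ is unitarily equivalent to $\tfrac1{4nm}\bigl(\begin{smallmatrix}I&X\\X^{*}&I\end{smallmatrix}\bigr)\oplus\tfrac1{4nm}I\oplus\tfrac1{4nm}I$, which is positive semidefinite exactly when $\|X\|\le 1$; reading off from Theorem~\ref{vnquotient} that $\cJ_{2n}$ is spanned by the differences $E_{aa}-E_{bb}$ together with the off-diagonal matrix units lying in a single $n\times n$ diagonal corner block (and similarly for $\cJ_{2m}$), one checks directly that $t=\Tr(\rho\,\cdot)$ annihilates $\cJ$ --- every block of $\rho$ that the kernel conditions force to vanish or to be a scalar multiple of the identity does so; and the $(K_{11},K_{22})$ block of $\rho$ yields $4nm\,t(E_{i,j+n}\otimes E_{k,\ell+m})=X_{(i,k),(j,\ell)}$, up to a harmless entrywise conjugation forced by the trace pairing. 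The induced state $s$ then satisfies $\bigl(s(u_{ij}\otimes v_{k\ell})\bigr)=X$, so $X\in B_{qmax}(n,m)$.

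The main point to get right is the explicit description of the kernel $\cJ=\ker(\varphi_n\otimes\varphi_m)$ and the verification that the displayed $\rho$ vanishes on it; this rests on knowing $\cJ_{2n}=\ker\varphi_n$ precisely, i.e.\ on the linear independence of $\{1\}\cup\{u_{ij}\}\cup\{u_{ij}^{*}\}$ in $\cV_n$ for $n\ge 2$ that underlies the quotient picture in Theorem~\ref{vnquotient}. Everything after that is routine bookkeeping plus one $2\times2$ block-positivity computation. In fact the same rigidity shows that the four diagonal blocks of \emph{any} admissible $\rho$ must all equal $\tfrac1{4nm}I$, which recovers $\|X\|\le 1$ directly and thus gives an alternative route to the first inclusion not relying on Theorem~\ref{crossnorm}.
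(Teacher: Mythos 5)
Your proposal is correct and is essentially the paper's own proof: your density matrix $\rho$ is, up to the normalization $4nm$ and the entrywise conjugation you flag as harmless, exactly the positive element $P=I+\chi+\chi^*$ of $M_{2n}\otimes M_{2m}$ constructed in the paper, paired against $M_{2n}\otimes M_{2m}$ and pushed through the complete quotient map of Corollary \ref{vnvmquotient} after the same verification that the functional annihilates $\cJ_{2n}\otimes M_{2m}+M_{2n}\otimes\cJ_{2m}$. The only differences are notational (density matrix with the trace pairing and explicit $4\times 4$ block form, versus the paper's Choi-matrix functional in tensor-leg notation), together with your optional closing observation about the rigidity of the diagonal blocks, which is extra and not needed.
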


\begin{proof}
Theorem \ref{crossnorm} shows that $B_{qmax}(n,m) \subseteq \{X \in M_{nm}: \|X\| \leq 1\}$.  For the reverse inclusion, let $X \in M_n \otimes M_m$ with operator norm at most $1$.  We may write $$X=\sum_{\substack{1 \leq i,j \leq n \\ 1 \leq k,\ell \leq m}} x_{ijk\ell} E_{ij} \otimes E_{k\ell}.$$
Define the element $$\chi=\sum_{i,j,k,\ell} x_{ijk\ell} \otimes E_{12} \otimes E_{ij} \otimes E_{12} \otimes E_{k\ell} \in M_2 \otimes M_n \otimes M_2 \otimes M_m,$$
and let $$P=I_2 \otimes I_n \otimes I_2 \otimes I_m+\chi+\chi^*.$$
Since $\| \chi \| \leq 1$ in $M_{2n} \otimes M_{2m}$, $P$ is positive in $M_{2n} \otimes M_{2m}$.  Therefore, the corresponding map $\gamma_P:M_2 \otimes M_n \otimes M_2 \otimes M_m \to \bC$ with Choi matrix equal to $P$ is a positive linear functional; moreover, $\gamma_P(I_2 \otimes I_n \otimes I_2 \otimes I_m)=4mn$.

Let $\cJ_{2n}=\ker \varphi_n$, where $\varphi_n:M_{2n} \to \cV_n$ is the complete quotient map in Theorem \ref{vnquotient}.  We claim that $\gamma_P(\cJ_{2n} \otimes M_{2m}+M_{2n} \otimes \cJ_{2m})=0$, so that $\gamma_P$ induces a positive linear functional $\widetilde{\gamma}_P$ on $\cV_n \otimes_{\max} \cV_m$.  To show this, we will show that $\gamma_P$ annihilates $\cJ_{2n} \otimes M_{2m}$; the other part is similar.  We may write $\cJ_{2n} \otimes M_{2m}$ as the set of all elements of the form $$\{(E_{11} \otimes A+E_{22} \otimes B) \otimes W\},$$
where $A,B \in M_n$, $W \in M_{2m}$ and $\tr(A)+\tr(B)=0\}$.
Applying $\gamma_P$ to an element $C$ of $\cJ_{2n} \otimes M_{2m}$, we obtain $$\gamma_P(C)=\tr(A)\tr(W)+\tr(B)\tr(W)=(\tr(A)+\tr(B))\tr(W)=0.$$
It follows that $\gamma_P(\ker(\varphi_n \otimes \varphi_m))=0$.  By Proposition \ref{firstiso} and Corollary \ref{vnvmquotient}, the induced functional $\widetilde{\gamma}_P:\cV_n \otimes_{\max} \cV_m \to \bC$ is positive with $\widetilde{\gamma}_P(1)=\gamma_P(I_2 \otimes I_n \otimes I_2 \otimes I_m)=4mn$.  Let $s=\frac{1}{4mn} \widetilde{\gamma}_P$, which is a state on $\cV_n \otimes_{\max} \cV_m$.  We observe that $$s(u_{ij} \otimes v_{k\ell})=\frac{1}{4mn} \widetilde{\gamma}_P(u_{ij} \otimes v_{k\ell})=\widetilde{\gamma}_P \left( \frac{1}{2n} u_{ij} \otimes \frac{1}{2m} v_{k\ell} \right).$$
Recall that the quotient image of $E_{12} \otimes E_{ij} \in M_2 \otimes M_n$ under the map $\varphi_n$ is $\frac{1}{2n} u_{ij}$, and similarly, the quotient image of $E_{12} \otimes E_{k\ell} \in M_2 \otimes M_m$ under the map $\varphi_m$ is $\frac{1}{2m} v_{k\ell}$.  Therefore, $$s(u_{ij} \otimes v_{k\ell})=\gamma_P(E_{12} \otimes E_{ij} \otimes E_{12} \otimes E_{k\ell})=x_{ijk\ell},$$
so that $(s(u_{ij} \otimes v_{k\ell}))=X$.  We conclude that $X \in B_{qmax}(n,m)$, as desired.
\end{proof}

A careful examination of Theorem \ref{vnquotient} and the proof of Theorem \ref{qmax} shows that for $X \in B_{qmax}(n,m)$, there is a state on $\cV_n \otimes_{\max} \cV_m$ such that $(s(u_{ij} \otimes v_{k\ell}))_{(i,j),(k,\ell)}=X$ and $s(x)=0$ for every $x \in \cG_{n,m} \setminus \{u_{ij} \otimes v_{k\ell} \}_{i,j,k,\ell} $.  The following proposition shows that such a state can always be found for elements of $B_t(n,m)$, where $t \in \{loc,qa,qc\}$.

\begin{pro}
Let $t \in \{ loc, qa,qc\}$ and $X \in B_t(n,m)$.  Then there is a state $s$ on $\cV_n \otimes_c \cV_m$ such that $s(u_{ij} \otimes 1)=0=s(1 \otimes v_{k\ell})$ and $s(u_{ij} \otimes v_{k\ell}^*)=0$ for all $i,j,k,\ell$, and $(s(u_{ij} \otimes v_{k\ell}))=X$.  If $X \in B_{qa}(n,m)$, then $s$ can be taken to be a state on $\cV_n \otimes_{\min} \cV_m$.
\end{pro}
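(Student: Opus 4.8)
The plan is to take any state realizing $X$ and symmetrize it against a circle‑group ``gauge'' action that fixes the generators $u_{ij}\otimes v_{k\ell}$ but rotates away all the other generators appearing in $\cG_{n,m}$.

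First I would reduce to the commuting (resp.\ minimal) picture: since $B_{loc}(n,m)\subseteq B_{qa}(n,m)\subseteq B_{qc}(n,m)$, every $X\in B_t(n,m)$ with $t\in\{loc,qa,qc\}$ is realized by some state $s_0$ on $\cV_n\otimes_c\cV_m$ with $(s_0(u_{ij}\otimes v_{k\ell}))_{(i,j),(k,\ell)}=X$, and if $X\in B_{qa}(n,m)$ this $s_0$ may be chosen on $\cV_n\otimes_{\min}\cV_m$. For $z\in\bT$ the matrix $(z\,u_{ij})$ is unitary in $M_n(\cU_{nc}(n))$, so by the universal property of $\cU_{nc}(n)$ there is a $\ast$‑automorphism of $\cU_{nc}(n)$ sending $u_{ij}\mapsto z\,u_{ij}$ (with inverse coming from $\bar z$); restricting it to $\cV_n$ gives a unital complete order automorphism $\alpha_z$ with $\alpha_z(u_{ij})=z\,u_{ij}$, $\alpha_z(u_{ij}^*)=\bar z\,u_{ij}^*$, $\alpha_z(1)=1$, and similarly a $\beta_z$ on $\cV_m$. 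By functoriality of $\otimes_c$ (resp.\ $\otimes_{\min}$), $\alpha_z\otimes\beta_{\bar z}$ is a unital complete order automorphism of $\cV_n\otimes_c\cV_m$ (resp.\ $\cV_n\otimes_{\min}\cV_m$) for each $z$.

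Next I would set
$$s(\omega)=\int_{\bT} s_0\bigl((\alpha_z\otimes\beta_{\bar z})(\omega)\bigr)\,dz,$$
with $dz$ normalized Haar measure on $\bT$. Because $\cV_n\otimes_c\cV_m$ is finite dimensional the integrand is a trigonometric polynomial in $z$, so $s$ is well defined, linear and unital, and it is positive because it lies in the weak‑$\ast$ closed convex hull of the states $s_0\circ(\alpha_z\otimes\beta_{\bar z})$; hence $s$ is a state, and in the $B_{qa}$ case it is a state on $\cV_n\otimes_{\min}\cV_m$. Evaluating on generators and using $\int_{\bT}z^k\,dz=\delta_{k,0}$: the element $u_{ij}\otimes v_{k\ell}$ picks up the factor $z\bar z=1$, so $s(u_{ij}\otimes v_{k\ell})=s_0(u_{ij}\otimes v_{k\ell})=X_{(i,j),(k,\ell)}$; the element $u_{ij}\otimes 1$ picks up $z$ and $1\otimes v_{k\ell}$ picks up $\bar z$, both integrating to $0$; and since $\beta_{\bar z}(v_{k\ell}^*)=z\,v_{k\ell}^*$, the element $u_{ij}\otimes v_{k\ell}^*$ picks up $z^2$, which also integrates to $0$. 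This produces exactly the required vanishing while preserving $X$.

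The only points needing care — and I do not expect a genuine obstacle — are checking that $zU$ being unitary really yields a $\ast$‑automorphism of $\cU_{nc}(n)$ restricting to a complete order automorphism of $\cV_n$ (immediate from the universal properties of $\cU_{nc}(n)$ and of $\cV_n$), and that integrating a continuous family of states over the compact group $\bT$ against Haar measure is again a state (standard, using finite‑dimensionality). The substantive step is simply identifying the correct gauge action $\alpha_z\otimes\beta_{\bar z}$.
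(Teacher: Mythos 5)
Your proposal is correct and uses essentially the same idea as the paper: averaging a circle's worth of phase-rotated states so that the mixed terms $u_{ij}\otimes 1$, $1\otimes v_{k\ell}$, $u_{ij}\otimes v_{k\ell}^*$ pick up factors $z$, $\bar z$, $z^2$ and integrate to zero while $u_{ij}\otimes v_{k\ell}$ is fixed; the paper implements the rotation on a concrete commuting representation ($U_{ij}\mapsto e^{i\theta}U_{ij}$, $V_{k\ell}\mapsto e^{-i\theta}V_{k\ell}$ with the same vector $\psi$), whereas you implement it via the gauge automorphisms of $\cU_{nc}(n)$ restricted to $\cV_n$ together with functoriality of $\otimes_c$ and $\otimes_{\min}$. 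A small bonus of your formulation is that the $qa$ case follows directly, since $\alpha_z\otimes\beta_{\bar z}$ is ucp on $\cV_n\otimes_{\min}\cV_m$, avoiding the limiting argument the paper invokes for states on the minimal tensor product.
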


\begin{proof}
Using the containments $B_{loc}(n,m) \subseteq B_{qc}(n,m)$ and $B_{qa}(n,m) \subseteq B_{qc}(n,m)$, there is a state $\omega$ on $\cV_n \otimes_c \cV_m$ with $(\omega(u_{ij} \otimes v_{k\ell}))=X$.  Let $U_{ij}$ and $V_{k\ell}$ be operators on a Hilbert space $\cH$ and let $\psi \in \cH$ be a unit vector such that $U=(U_{ij})$ and $V=(V_{k\ell})$ are unitaries in $M_n(\bofh)$ and $M_m(\bofh)$ respectively; $U_{ij}V_{k\ell}=V_{k\ell}U_{ij}$ for all $i,j,k,\ell$; and $\omega(u_{ij} \otimes v_{k\ell})=\la U_{ij}V_{k\ell}\psi,\psi \ra$.  For $\theta \in [0,2\pi]$, define $\omega_{\theta}$ to be the state on $\cV_n \otimes_c \cV_m$ corresponding to the unitaries $U_{\theta}=(e^{i \theta}U_{ij})$ and $V_{\theta}=(e^{-i \theta}V_{k\ell})$ and unit vector $\psi$.  Then the entries of $U_{\theta}$ and $V_{\theta}$ still $*$-commute, and $\la (U_{\theta})_{ij}(V_{\theta})_{k\ell}\psi,\psi \ra=X_{(i,j),(k,\ell)}$ for all $i,j,k,\ell$.  It is immediate that $\omega_{\theta}(u_{ij} \otimes 1)=e^{i \theta} \omega(u_{ij} \otimes 1)$, $\omega_{\theta}(1 \otimes v_{k\ell})=e^{-i \theta} \omega(1 \otimes v_{k\ell})$ and $\omega_{\theta}(u_{ij} \otimes v_{k\ell}^*)=e^{2i \theta} \omega(u_{ij} \otimes v_{k\ell}^*)$.  Define $s:\cV_n \otimes \cV_m \to \bC$ by $$s(x)=\frac{1}{2\pi}\int_0^{2\pi} \omega_{\theta}(x) \, d\theta,$$ which defines a state on $\cV_n \otimes_c \cV_m$, satisfying $s(1 \otimes v_{k\ell})=0=s(u_{ij} \otimes 1)$ and $s(u_{ij} \otimes v_{k\ell}^*)=0$.  If $X \in B_{qa}(n,m)$, then the state $s$ can be taken to be be a limit of states on $\cV_n \otimes_{\min} \cV_m$, so that $s$ is a state on $\cV_n \otimes_{\min} \cV_m$.
\end{proof}

%%%%%%%%%%%%%%%%%%%%%%%
%%%%%%%%%%%%%%%%%%%%%%
%%%%%%%%%%%%%%

\section{Connes' Embedding Problem}

We now move towards another equivalent statement of Connes' embedding problem.  We will show that the equality of the $qa$ and $qc$ norms on $M_n \otimes M_n$ is equivalent to a positive answer to the embedding problem.  First, we adopt some notation.  Let $F_{\infty}$ denote the free group on a countably infinite number of generators, and let $(w_i)_{i=1}^{\infty}$ be a set of universal generators for $F_{\infty}$.  We define the following operator systems: \begin{align*}
\cX_n&=\spn(\{1\} \cup \{w_i \otimes w_j,w_i^* \otimes w_j^*\}_{i,j=1}^n) \subseteq C^*(F_n) \otimes_{\min} C^*(F_n), \\ \cY_n&=\spn(\{1\} \cup \{w_i \otimes w_j,w_i^* \otimes w_j^*\}_{i,j=1}^n) \subseteq C^*(F_n) \otimes_{\max} C^*(F_n), \\
\cX_{\infty}&=\spn(\{1\} \cup \{w_i \otimes w_j, w_i^* \otimes w_j^*\}_{i,j=1}^{\infty}) \subseteq C^*(F_{\infty}) \otimes_{\min} C^*(F_{\infty}) \text{ and} \\ \cY_{\infty}&=\spn(\{1\} \cup \{w_i \otimes w_j, w_i^* \otimes w_j^*\}_{i,j=1}^{\infty}) \subseteq C^*(F_{\infty}) \otimes_{\max} C^*(F_{\infty}).
\end{align*}

\begin{pro}
\label{correlationorderiso}
Let $n \geq 2$.  If $B_{qa}(n,n)=B_{qc}(n,n)$, then the formal identity map $\id:\cX_n \to \cY_n$ is an order isomorphism.
\end{pro}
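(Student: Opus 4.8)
The plan is to translate the assertion into an equality of two ``correlation bodies'' in $M_n$, and then to push the hypothesis $B_{qa}(n,n)=B_{qc}(n,n)$ through that translation by means of a diagonal embedding and a compression; the decisive point will be a unitary dilation performed separately on the two tensor legs.

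First I would reduce to cones. Since $\min\le\max$, the canonical surjective $*$-homomorphism $C^*(F_n)\otimes_{\max}C^*(F_n)\to C^*(F_n)\otimes_{\min}C^*(F_n)$ restricts to a unital completely positive map $\cY_n\to\cX_n$ that is the identity on the common underlying vector space, so $\id:\cX_n\to\cY_n$ is an order isomorphism precisely when the two positive cones coincide and (both operator systems carrying the order unit $1$) precisely when $\cS(\cX_n)=\cS(\cY_n)$ inside the dual of the common underlying space. A state $g$ on $\cX_n$ is determined by $Z_g=(g(w_i\otimes w_j))_{i,j}\in M_n$ (self-adjointness fixes its values on the $w_i^*\otimes w_j^*$), and $g$ is positive exactly when it extends, by Arveson's theorem, to a state on $C^*(F_n)\otimes_{\min}C^*(F_n)$. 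Running the standard argument that identifies $\overline{UC_q}$ with $UC_{qmin}$ (restrict an extension to a vector state on $\cB(\cH_A\otimes\cH_B)$, absorb convex combinations into direct sums, take weak-$*$ limits) identifies $\cS(\cX_n)$ with the closed set
\[
\cZ_{\min}=\overline{\bigl\{(\langle (a_i\otimes b_j)\xi,\xi\rangle)_{i,j}: a_i,b_j\text{ unitaries on }\cH_A,\cH_B,\ \xi\in\cH_A\otimes\cH_B,\ \|\xi\|=1\bigr\}},
\]
and likewise $\cS(\cY_n)$ with the (already closed) set $\cZ_{\max}$ of such correlations in the commuting model $\langle a_ib_j\xi,\xi\rangle$. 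As spatial correlations are commuting correlations, $\cZ_{\min}\subseteq\cZ_{\max}$, so it remains only to prove the reverse inclusion $\cZ_{\max}\subseteq\cZ_{\min}$.

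Next I would set up the transfer maps. Let $\Delta:M_n\to M_n\otimes M_n$, $\Delta(Z)_{(i,j),(k,\ell)}=\delta_{ij}\delta_{k\ell}Z_{ik}$, and let $\Lambda:M_n\otimes M_n\to M_n$ be the compression $\Lambda(X)_{ik}=X_{(i,i),(k,k)}$, so that $\Lambda\circ\Delta=\id_{M_n}$ and both maps are linear and continuous. If $Z=(\langle a_ib_j\xi,\xi\rangle)\in\cZ_{\max}$ with commuting unitaries, then $U=\mathrm{diag}(a_1,\dots,a_n)$ and $V=\mathrm{diag}(b_1,\dots,b_n)$ are unitaries in $M_n(\cB(\cH))$ whose entries commute, and the state they induce on $\cV_n\otimes_c\cV_n$ sends $u_{ij}\otimes v_{k\ell}$ to $\delta_{ij}\delta_{k\ell}Z_{ik}$; hence $\Delta(\cZ_{\max})\subseteq B_{qc}(n,n)$. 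For the other side, recall (Harris) that $B_{qa}(n,n)$ is the closure of the set of spatial matrix–unitary correlations $(\langle (U_{ij}\otimes V_{k\ell})\xi,\xi\rangle)$ with $U\in M_n(\cB(\cH_A))$, $V\in M_n(\cB(\cH_B))$ unitary and $\xi\in\cH_A\otimes\cH_B$ a unit vector. For such a point, $\Lambda(X)=(\langle (U_{ii}\otimes V_{kk})\xi,\xi\rangle)_{i,k}$, where the diagonal entries $U_{ii},V_{kk}$ are merely contractions; dilate each of them \emph{separately} to a unitary by the standard $2\times2$ unitary dilation, so that $U_{ii}$ is the $(1,1)$-corner of a unitary $\widetilde u_i\in\cB(\cH_A\oplus\cH_A)$ and $V_{kk}$ the $(1,1)$-corner of a unitary $\widetilde v_k\in\cB(\cH_B\oplus\cH_B)$. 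Crucially, in the spatial model this suffices: $U_{ii}\otimes V_{kk}$ is the compression of $\widetilde u_i\otimes\widetilde v_k$ from $(\cH_A\oplus\cH_A)\otimes(\cH_B\oplus\cH_B)$ down to $\cH_A\otimes\cH_B$, and $\xi$ already lies in the latter, so $\langle (U_{ii}\otimes V_{kk})\xi,\xi\rangle=\langle (\widetilde u_i\otimes\widetilde v_k)\xi,\xi\rangle$, exhibiting $\Lambda(X)$ as a genuine spatial \emph{unitary} correlation. Since $\Lambda$ is continuous and $\cZ_{\min}$ is closed, $\Lambda(B_{qa}(n,n))\subseteq\cZ_{\min}$.

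Combining these with the hypothesis gives
\[
\cZ_{\max}=\Lambda\bigl(\Delta(\cZ_{\max})\bigr)\subseteq\Lambda\bigl(B_{qc}(n,n)\bigr)=\Lambda\bigl(B_{qa}(n,n)\bigr)\subseteq\cZ_{\min},
\]
hence $\cZ_{\min}=\cZ_{\max}$, $\cS(\cX_n)=\cS(\cY_n)$, and $\id:\cX_n\to\cY_n$ is an order isomorphism. I expect the main obstacle to be precisely the dilation step: the naive way to extract $n$ unitaries from an $n\times n$ unitary operator matrix (pass to the diagonal entries) yields only contractions, so without the dilation one would get the ``corner'' correlation body rather than $\cZ_{\min}$ itself; it is essential that on each tensor leg a single dilation — not a simultaneous commuting dilation — is enough, which is exactly what the spatial (tensor-product) model affords, and which would fail in the commuting model. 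A secondary point to get right is the bookkeeping of closures, since $B_{qa}(n,n)$ and $\cZ_{\min}$ are defined as closures while $\cZ_{\max}$ and $B_{qc}(n,n)$ are already closed.
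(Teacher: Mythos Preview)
Your argument is correct and rests on the same underlying idea as the paper's proof: both exploit the diagonal correspondence $w_i \leftrightarrow u_{ii}$ between $C^*(F_n)$ and $\cU_{nc}(n)$ together with its one-sided inverse $u_{ij}\mapsto \delta_{ij}w_i$. The difference is purely in the level of abstraction. The paper invokes the ucp maps $\psi_n:C^*(F_n)\to\cU_{nc}(n)$, $\psi_n(w_i)=u_{ii}$, and $\pi_n:\cU_{nc}(n)\to C^*(F_n)$, $\pi_n(u_{ij})=\delta_{ij}w_i$, with $\pi_n\circ\psi_n=\id$ (from \cite[Theorem~4.10]{harris}), and then uses functoriality of $\min$ and $\max$ to conclude that $\psi_n\otimes\psi_n$ is a complete order embedding for both tensor products; this identifies $\cX_n$ and $\cY_n$ with the diagonal span $\spn\{1,u_{ii}\otimes v_{jj},u_{ii}^*\otimes v_{jj}^*\}$ inside $\cU_{nc}(n)\otimes_{\min}\cU_{nc}(n)$ and $\cU_{nc}(n)\otimes_{\max}\cU_{nc}(n)$ respectively, and these two diagonal spans carry the same order once $B_{qa}(n,n)=B_{qc}(n,n)$. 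Your route unpacks this at the level of correlation bodies: your transfer maps $\Delta$ and $\Lambda$ are precisely the state-space duals of $\pi_n\otimes\pi_n$ and $\psi_n\otimes\psi_n$, and your $2\times 2$ unitary dilation on each tensor leg is exactly the concrete content of the ucp-ness of $\psi_n$ (the diagonal entry $u_{ii}$ is only a contraction, and dilating it to a unitary is what makes $w_i\mapsto u_{ii}$ ucp). The paper's approach is shorter and, as a byproduct, gives a complete order embedding rather than merely an order isomorphism at the scalar level; your approach is more self-contained, avoids citing the existence of $\psi_n,\pi_n$, and makes transparent why the spatial (tensor-product) model is essential to the dilation step.
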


\begin{proof}
Since $B_{qa}(n,n)=B_{qc}(n,n)$, the formal identity map $$\id:\spn (\{1\} \cup \{u_{ij} \otimes v_{k\ell},u_{ij}^* \otimes v_{k\ell}^*\}_{i,j,k,\ell}) \to \cU_{nc}(n) \otimes_{\max} \cU_{nc}(n)$$ from $\cU_{nc}(n) \otimes_{\min} \cU_{nc}(n)$ is an order isomorphism onto its range.  By the proof of \cite[Theorem 4.10]{harris}, there are ucp maps $\psi_n:C^*(F_n) \to \cU_{nc}(n)$ and $\pi_n:\cU_{nc}(n) \to C^*(F_n)$ such that $\id_{C^*(F_n)}=\pi_n \circ \psi_n$.  Moreover, $\psi_n(w_i)=u_{ii}$ and $\pi_n(u_{ij})=\delta_{ij} w_i$.  By functoriality of the min and max tensor products, $\psi_n \otimes \psi_n:C^*(F_n) \otimes_{\min} C^*(F_n) \to \cU_{nc}(n) \otimes_{\min} \cU_{nc}(n)$ and $\psi_n \otimes \psi_n:C^*(F_n) \otimes_{\max} C^*(F_n) \to \cU_{nc}(n) \otimes_{\max} \cU_{nc}(n)$ are complete order embeddings.  Therefore, $\cX_n$ is completely order isomorphic to $\spn (\{1\} \cup \{u_{ii} \otimes v_{jj},u_{ii}^* \otimes v_{jj}^*\}_{i,j=1}^n)$ inside of $\cU_{nc}(n) \otimes_{\min} \cU_{nc}(n)$; the analogous result holds for $\cY_n$ inside of $\cU_{nc}(n) \otimes_{\max} \cU_{nc}(n)$.  It follows that $\id:\cX_n \to \cY_n$ is an order isomorphism.
\end{proof}

We require a few results from \cite{ozawa}.  We first recall that, given a $C^{\ast}$-algebra $\cA$, the \textbf{opposite algebra} of $\cA$, denoted by $\cA^{op}$, is a $C^{\ast}$-algebra with the same $*$-vector space structure as $\cA$, but with multiplication given by $(a^{op}b^{op})=(ba)^{op}$.  A special case of the opposite algebra is for $C^*(F_{\infty})$.  The mapping $w_i \mapsto w_i^*$ extends to a unital $*$-isomorphism from $C^*(F_{\infty})$ onto $C^*(F_{\infty})^{op}$.

\begin{mythe}
\emph{(Ozawa, \cite{ozawa})}
\label{tracialstateonop}
Let $\cA$ be a unital $C^{\ast}$-algebra, and let $\tau$ be a tracial state on $\cA$.  Then the map $s_{\tau}:\cA \otimes_{\max} \cA^{op} \to \bC$ given by $s_{\tau}(a \otimes b^{op})=\tau(ab)$ extends to a state on $\cA \otimes_{\max} \cA^{op}$.
\end{mythe}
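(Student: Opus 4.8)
The plan is to realize $s_{\tau}$ as a vector state composed with the product of the left and right multiplication representations coming from the GNS construction for $\tau$.

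First I would build the GNS Hilbert space $\cH_{\tau}$, the completion of $\cA$ under the inner product $\la \widehat{a},\widehat{b}\ra=\tau(b^*a)$, with cyclic vector $\xi_{\tau}=\widehat{1}$, which is a unit vector since $\tau(1)=1$. Left multiplication gives a unital $*$-representation $\lambda:\cA\to\cB(\cH_{\tau})$, $\lambda(a)\widehat{x}=\widehat{ax}$; its contractivity is immediate from $a^*a\leq\|a\|^2 1$ and positivity of $\tau$. Right multiplication gives a map $\rho:\cA^{op}\to\cB(\cH_{\tau})$, $\rho(b^{op})\widehat{x}=\widehat{xb}$; that $\rho$ respects the opposite multiplication is the bookkeeping identity $\widehat{(xb)a}=\widehat{x(ba)}$, and its contractivity follows from $\tau((xb)^*(xb))=\tau(x^*x\,bb^*)\leq\|b\|^2\tau(x^*x)$, where the trace property is used to move $bb^*$ past $x^*x$.

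The crucial points --- and the place where traciality is genuinely needed --- are that $\rho$ is $*$-preserving and that $\lambda$ and $\rho$ have commuting ranges. For $*$-preservation one checks that $\la\rho(b^{op})\widehat{x},\widehat{y}\ra=\tau(y^*xb)$ must equal $\la\widehat{x},\rho((b^*)^{op})\widehat{y}\ra=\tau(by^*x)$, which is precisely the statement that $\tau$ is a trace; for commutation one computes $\lambda(a)\rho(b^{op})\widehat{x}=\widehat{axb}=\rho(b^{op})\lambda(a)\widehat{x}$. I expect that verifying these two facts, together with keeping the $\cA^{op}$-multiplication straight, is the only real content of the argument; everything else is formal. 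At this point $\rho$ is a unital $*$-representation of $\cA^{op}$ on $\cH_{\tau}$.

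With $\lambda$ and $\rho$ in hand I would invoke the very definition of the commuting tensor product: since $\lambda:\cA\to\cB(\cH_{\tau})$ and $\rho:\cA^{op}\to\cB(\cH_{\tau})$ are ucp (being unital $*$-homomorphisms) with commuting ranges, the product map $\lambda\cdot\rho:\cA\otimes_c\cA^{op}\to\cB(\cH_{\tau})$ is ucp. Hence $s_{\tau}:=\la(\lambda\cdot\rho)(\,\cdot\,)\xi_{\tau},\xi_{\tau}\ra$ is a state on $\cA\otimes_c\cA^{op}$, and since $c\leq\max$ --- so that $\id:\cA\otimes_{\max}\cA^{op}\to\cA\otimes_c\cA^{op}$ is ucp --- composing with this identity exhibits $s_{\tau}$ as a state on $\cA\otimes_{\max}\cA^{op}$. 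On simple tensors one gets $s_{\tau}(a\otimes b^{op})=\la\lambda(a)\rho(b^{op})\widehat{1},\widehat{1}\ra=\la\widehat{ab},\widehat{1}\ra=\tau(ab)$, as required. (Alternatively, one can bypass $\otimes_c$ altogether and quote the universal property of the maximal $C^{\ast}$-tensor product for pairs of commuting representations, obtaining $\lambda\cdot\rho$ as a genuine $*$-homomorphism on $\cA\otimes_{\max}\cA^{op}$.)
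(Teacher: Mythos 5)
Your proposal is correct, and it is the standard argument: the paper itself cites this result to Ozawa without reproducing a proof, and the proof there is exactly your GNS construction, with left multiplication representing $\cA$, right multiplication representing $\cA^{op}$ (traciality giving boundedness and $*$-preservation of the right action), commuting ranges, and the vector state at the cyclic vector $\xi_{\tau}$ recovering $\tau(ab)$ on simple tensors. Your closing parenthetical --- invoking the universal property of the maximal tensor product for the pair of commuting representations $\lambda$, $\rho$ --- is the cleanest way to finish, and the detour through $\otimes_c$ is also fine since $c\leq\max$; the only cosmetic point is that one should pass to the quotient of $\cA$ by the null space of $\tau$ before completing, which your notation $\widehat{a}$ implicitly accommodates.
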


We obtain the following description of traces in terms of certain states on $\cY_{\infty}$.

\begin{mythe}
\emph{(Ozawa, \cite{ozawa})}
\label{tracesandstatesonmax}
Let $\cA$ be a separable $C^{\ast}$-algebra, and let $(u_i)_{i=1}^{\infty}$ be a generating sequence of unitaries in the unitary group of $\cA$.  Suppose that $\tau$ is a tracial state on $\cA$.  Then the mapping $w_i \otimes w_j \mapsto \tau(u_iu_j^*)$ extends to a state on $\cY_{\infty}$.
\end{mythe}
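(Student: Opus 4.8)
The plan is to deduce the statement directly from Ozawa's Theorem~\ref{tracialstateonop} by transporting the state $s_\tau$ on $\cA \otimes_{\max} \cA^{op}$ back to $C^*(F_\infty) \otimes_{\max} C^*(F_\infty)$ along a suitable pair of $*$-homomorphisms out of $C^*(F_\infty)$, and then restricting.

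First I would use the universal property of $C^*(F_\infty)$ twice. Since $(u_i)_{i=1}^{\infty}$ are unitaries in $\cA$, there is a unital $*$-homomorphism $\alpha : C^*(F_\infty) \to \cA$ with $\alpha(w_i) = u_i$. Likewise, each $u_j^*$ is a unitary in $\cA$, so $(u_j^*)^{op}$ is a unitary in the opposite $C^*$-algebra $\cA^{op}$, whence there is a unital $*$-homomorphism $\beta : C^*(F_\infty) \to \cA^{op}$ with $\beta(w_j) = (u_j^*)^{op}$. (Separability of $\cA$ is not actually needed here; it is already implied by the hypothesis that the $u_i$ generate $\cA$.)

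Next I would invoke functoriality of the maximal $C^*$-tensor product to obtain a unital $*$-homomorphism
\[
\alpha \otimes_{\max} \beta : C^*(F_\infty) \otimes_{\max} C^*(F_\infty) \longrightarrow \cA \otimes_{\max} \cA^{op}, \qquad w_i \otimes w_j \longmapsto u_i \otimes (u_j^*)^{op}.
\]
By Theorem~\ref{tracialstateonop}, the map $s_\tau : \cA \otimes_{\max} \cA^{op} \to \bC$ with $s_\tau(a \otimes b^{op}) = \tau(ab)$ is a state, so $\varphi := s_\tau \circ (\alpha \otimes_{\max} \beta)$ is a state on $C^*(F_\infty) \otimes_{\max} C^*(F_\infty)$, and $\varphi(w_i \otimes w_j) = s_\tau\big(u_i \otimes (u_j^*)^{op}\big) = \tau(u_i u_j^*)$. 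Finally, restricting $\varphi$ to the operator subsystem $\cY_\infty$ — which contains the unit, so the restriction is again a state — produces the desired state on $\cY_\infty$.

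The steps requiring care, none of them substantial, are: verifying that $w_j \mapsto (u_j^*)^{op}$ does land among the unitaries of $\cA^{op}$ so that the universal property applies; confirming that the operator system structure $\cY_\infty$ inherits as a subspace of the $C^*$-algebraic maximal tensor product is exactly the one for which ``state'' means the restriction of a state on the ambient $C^*$-algebra; and matching indices so that $w_i \otimes w_j \mapsto u_i \otimes (u_j^*)^{op}$ (rather than $u_i \otimes (u_j)^{op}$), which is precisely what dictates the choice $\beta(w_j) = (u_j^*)^{op}$. I expect the whole conceptual weight of the argument to be carried by Ozawa's theorem, with everything else being this bookkeeping.
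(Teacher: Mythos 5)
Your proof is correct and is essentially the paper's argument: the paper also composes Ozawa's state $s_\tau$ on $\cA \otimes_{\max} \cA^{op}$ with a tensor product of $*$-homomorphisms out of $C^*(F_\infty)$ and restricts to $\cY_\infty$, the only cosmetic difference being that it realizes your map $\beta$ as $\sigma^{op}$ precomposed with the canonical isomorphism $w_i \mapsto w_i^*$ of $C^*(F_\infty)$ onto $C^*(F_\infty)^{op}$ rather than invoking the universal property directly.
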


\begin{proof}
We let $\sigma:C^*(F_{\infty}) \to \cA$ be the surjective unital $*$-homomorphism given by $\sigma(w_i)=u_i$.  By Theorem \ref{tracialstateonop}, $\tau$ induces a state $s_{\tau}$ on $\cA \otimes_{\max} \cA^{op}$ given by $a \otimes b^{op} \mapsto \tau(ab)$.  Let $\sigma^{op}:(C^*(F_{\infty}))^{op} \to \cA^{op}$ denote the opposite representation of $\sigma$, given by $\sigma^{op}(w_i^{op})=u_i^{op}$.  Then $\sigma \otimes \sigma^{op}:C^*(F_{\infty}) \otimes_{\max} C^*(F_{\infty}) \to \cA \otimes_{\max} \cA^{op}$ is a $*$-homomorphism.  Using the fact that $C^*(F_{\infty})^{op} \simeq C^*(F_{\infty})$, we see that the mapping $w_i \otimes w_j \mapsto \tau(u_iu_j^*)=s_{\tau} \circ (\sigma \otimes \sigma^{op})(w_i \otimes w_j^{op})$ extends to a state on $\cY_{\infty}$.
\end{proof}

The key result that links $\cX_{\infty}$ to Connes' embedding problem is the following.

\begin{mythe}
\emph{(Ozawa, \cite{ozawa})}
\label{satisfyingconnes}
Let $\cA$ be a separable $C^{\ast}$-algebra with a countable dense sequence $(u_i)_{i=1}^{\infty}$ of unitaries and a tracial state $\tau$.  Then $(\pi_{\tau}(\cA)'',\tau)$ satisfies Connes' embedding problem if and only if the mapping $w_i \otimes w_j \mapsto \tau(u_iu_j^*)$ extends to a state on $\cX_{\infty}$.
\end{mythe}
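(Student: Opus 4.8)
\emph{Plan.} Throughout write $\sigma\colon C^*(F_\infty)\to\cA$ for the surjective unital $*$-homomorphism with $\sigma(w_i)=u_i$, and $\tau_0=\tau\circ\pi_\tau\circ\sigma$ for the resulting tracial state on $C^*(F_\infty)$, so that $\tau_0(w_iw_j^*)=\tau(u_iu_j^*)$ and $(M,\tau)=(\pi_\tau(\cA)'',\tau)$ is (isomorphic to) the tracial von Neumann algebra generated by the GNS representation of $(C^*(F_\infty),\tau_0)$. I will use the standard characterisation that $(M,\tau)$ satisfies Connes' embedding problem if and only if it admits matricial microstates for the generators $(u_i)$: for every finite index set, every $k$ and every $\ee>0$ there are $d\in\bN$ and unitaries $A_i\in M_d$ matching all $*$-moments of the $u_i$ of length $\le k$ within $\ee$; in particular, for such microstates $|\tr_d(A_iA_j^*)-\tau(u_iu_j^*)|<\ee$. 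The proof then consists of translating between such microstates and states on $\cX_\infty$.

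For the forward implication, I would fix a finite index set and $\ee>0$ and take matricial microstates $A_i\in M_d$ as above (one may take the $A_i$ unitary since the $u_i$ are). Using the identity $\tr_d(XY^*)=\langle (X\otimes\overline{Y})\Omega_d,\Omega_d\rangle$ with $\Omega_d=\tfrac{1}{\sqrt d}\sum_k e_k\otimes e_k$ the maximally entangled vector, I would define finite-dimensional $*$-representations $\rho_1,\rho_2\colon C^*(F_\infty)\to M_d$ by $\rho_1(w_i)=A_i$, $\rho_2(w_j)=\overline{A_j}$ on the chosen indices (extending by the identity elsewhere). Then $\rho_1\otimes\rho_2$ is a $*$-representation of $C^*(F_\infty)\otimes_{\min}C^*(F_\infty)$ into $M_d\otimes_{\min}M_d=M_{d^2}$, and the vector state at $\Omega_d$ restricts to a state on $\cX_\infty$ sending $w_i\otimes w_j$ to $\tr_d(A_iA_j^*)$, hence to within $\ee$ of $\tau(u_iu_j^*)$. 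Running this over an exhausting sequence of index sets with $\ee\to0$ and passing to a weak-$*$ cluster point in the (weak-$*$ compact) state space of $\cX_\infty$ yields a state $s$ on $\cX_\infty$ with $s(w_i\otimes w_j)=\tau(u_iu_j^*)$ exactly.

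For the converse, suppose $s$ is a state on $\cX_\infty$ with $s(w_i\otimes w_j)=\tau(u_iu_j^*)$. By Arveson's extension theorem I would extend $s$ to a state $\widetilde s$ on $C^*(F_\infty)\otimes_{\min}C^*(F_\infty)$. Since $C^*(F_\infty)$ is residually finite-dimensional (Choi), so is this minimal tensor product, and hence states factoring through finite-dimensional $*$-representations are weak-$*$ dense in its state space; approximating $\widetilde s$ by such states, and analysing how a finite-dimensional representation of a minimal tensor product splits into blocks $\bigoplus_k(\rho_1^{(k)}\otimes\rho_2^{(k)})$, I would obtain finite-dimensional data — pairs of representations of $C^*(F_\infty)$ into matrix algebras together with unit vectors $\eta$ — whose associated correlations $\langle(\rho_1(w_i)\otimes\rho_2(w_j))\eta,\eta\rangle$ converge to $(\tau(u_iu_j^*))_{i,j}$. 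The technical heart is then to upgrade this degree-$(1,1)$ matricial data into genuine matricial microstates for $(M,\tau)$: here one uses, first, the special shape of $\cX_\infty$ (it contains only $1$ and the \emph{balanced} degree-$(\pm1,\pm1)$ elements, whose positivity constraints together with the trace symmetry $\tau(u_iu_j^*)=\tau(u_j^*u_i)$ let one normalise the finite-dimensional models to ``trace type'', realising the correlations as normalised matrix traces), and second — crucially — the hypothesis that $(u_i)$ is \emph{dense} in the unitary group of $\cA$, so that every unitary of $\cA$, hence every generator of $M$ and, via a standard approximate-multiplicativity argument on the finitely many relations occurring in a fixed $*$-moment, enough of $M$, is $\|\cdot\|_2$-approximated by some $u_i$. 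Combining these gives matricial microstates for $(M,\tau)$, hence a trace-preserving embedding $M\hookrightarrow R^\omega$.

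I expect the converse direction to be the main obstacle, and specifically the passage from a state on $\cX_\infty$ — which a priori only controls the degree-$(1,1)$ data and does so through arbitrary, relation-free finite-dimensional representations of the free group — to matricial microstates that respect the relations of $\cA$. This is exactly the point where the density (rather than mere generation) of $(u_i)$ is indispensable, and it is the heart of Ozawa's argument; the forward implication, by contrast, is a routine maximally-entangled-vector computation once the microstates characterisation of Connes-embeddability is granted.
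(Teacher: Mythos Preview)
The paper does not prove this theorem; it is quoted from Ozawa \cite{ozawa} and stated without proof, so there is no in-paper argument to compare against. What follows is therefore an assessment of your sketch on its own terms.

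Your forward implication is correct and is the standard argument: given matricial microstates $A_i\in M_d$ for the generators, the representations $\rho_1(w_i)=A_i$ and $\rho_2(w_j)=\overline{A_j}$ together with the maximally entangled vector $\Omega_d$ produce a state on $C^*(F_\infty)\otimes_{\min}C^*(F_\infty)$, hence on $\cX_\infty$, sending $w_i\otimes w_j$ to $\tr_d(A_iA_j^*)$; a weak-$*$ limit finishes.

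The converse, however, remains a genuine gap in your write-up rather than a proof. You correctly isolate the difficulty --- passing from arbitrary finite-dimensional data of the form $\langle(\rho_1(w_i)\otimes\rho_2(w_j))\eta,\eta\rangle$ to honest microstates $\tr_d(A_iA_j^*)$ with a \emph{single} family $(A_i)$ --- but you do not actually carry it out. Two concrete steps are missing. First, the ``normalise to trace type'' step: a state on $\cX_\infty$ approximated via RFD gives you two a priori unrelated representations $\rho_1,\rho_2$ and an arbitrary vector $\eta$; one must argue (using positivity on $\cX_\infty$ and the symmetry $\tau(u_iu_j^*)=\overline{\tau(u_ju_i^*)}$) that these can be replaced by a single representation paired with the tracial vector state. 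Second, the density argument: you note that every unitary in $\cA$ is approximable by some $u_j$, but you do not explain how this converts degree-$(1,1)$ tracial approximations into approximations of arbitrary $*$-moments. The point (in Ozawa's treatment) is that one may assume $1$ is among the $u_i$, so that any moment $\tau(u_{i_1}^{\varepsilon_1}\cdots u_{i_k}^{\varepsilon_k})$ is, by density, close to $\tau(u_p u_q^*)$ for suitable $p,q$; the analogous matricial identity then transfers the degree-$(1,1)$ approximation to all moments. Without these two steps spelled out, the converse is a plan, not a proof.
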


In order to use Theorem \ref{satisfyingconnes}, we must ensure that each $\cX_n$ and $\cY_n$ can be considered inside of the respective tensor product of $C^*(F_{\infty})$.

\begin{lem}
For each $n \geq 2$, the formal identity maps $\id:\cX_n \to \cX_{\infty}$ and $\id:\cY_n \to \cY_{\infty}$ are complete order embeddings.
\end{lem}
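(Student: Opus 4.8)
The plan is to realize the formal inclusion $\id:\cX_n \to \cX_{\infty}$ as a unital completely positive map that admits a ucp left inverse, and likewise for $\id:\cY_n \to \cY_{\infty}$; any ucp map with a ucp left inverse is automatically a complete order embedding, since if $\Phi:\cS \to \cT$ and $\Psi:\cT \to \cS$ are ucp with $\Psi \circ \Phi = \id_{\cS}$, then $\Phi^{(p)}(X) \in M_p(\cT)_+$ forces $X = \Psi^{(p)}(\Phi^{(p)}(X)) \in M_p(\cS)_+$, so $\Phi$ reflects positivity at every matrix level.

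To build the left inverse I would use the obvious retraction of free groups. Let $\iota:C^*(F_n) \to C^*(F_{\infty})$ be the unital $*$-homomorphism induced by the inclusion $F_n \hookrightarrow F_{\infty}$, i.e.\ $\iota(w_i)=w_i$ for $1 \leq i \leq n$, and let $q:C^*(F_{\infty}) \to C^*(F_n)$ be the unital $*$-homomorphism induced by the group homomorphism that fixes $w_1,\dots,w_n$ and sends every remaining generator $w_i$ (for $i>n$) to $w_1$. Then $q \circ \iota = \id_{C^*(F_n)}$. By functoriality of $\otimes_{\min}$ and $\otimes_{\max}$ under $*$-homomorphisms, $\iota \otimes \iota$ and $q \otimes q$ are unital $*$-homomorphisms, hence ucp, between the corresponding tensor products, and $(q \otimes q) \circ (\iota \otimes \iota)=\id$. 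Now $\iota \otimes \iota$ sends the spanning set $\{1\} \cup \{w_i \otimes w_j, w_i^* \otimes w_j^*\}_{i,j=1}^n$ of $\cX_n$ (resp.\ $\cY_n$) into the corresponding spanning set of $\cX_{\infty}$ (resp.\ $\cY_{\infty}$), so it restricts to a ucp map $\Phi$ which is exactly the formal identity; and because $q$ maps every generator into $\{w_1,\dots,w_n\}$, the map $q \otimes q$ carries the spanning set of $\cX_{\infty}$ back into that of $\cX_n$, so it restricts to a ucp map $\Psi$ with $\Psi \circ \Phi = \id$ on generators, hence on all of $\cX_n$ (resp.\ $\cY_n$). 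Invoking the first paragraph finishes the proof.

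The construction is completely explicit and uses nothing beyond functoriality of the two tensor products together with the existence of a retraction $F_{\infty} \to F_n$, so I do not expect a genuine obstacle. The one point that needs attention is the choice of $q$: it must send the superfluous generators to an actual \emph{generator} $w_k$ with $k \leq n$, not to the identity, since $1 \otimes w_j$ does not lie in $\cX_n$ and so a naive retraction killing the extra generators would fail to map $\cX_{\infty}$ into $\cX_n$. Note also that in the $\cY$-case one never needs the maximal tensor product to behave injectively; injectivity of $\Phi$ is supplied automatically by the left inverse $\Psi$.
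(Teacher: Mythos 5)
Your proof is correct: the principle that a ucp map with a ucp left inverse is a complete order embedding is sound, the modified retraction $q(w_i)=w_1$ for $i>n$ is a genuine $*$-homomorphism with $q\circ\iota=\id_{C^*(F_n)}$, and functoriality of $\otimes_{\min}$ and $\otimes_{\max}$ makes $\iota\otimes\iota$ and $q\otimes q$ ucp, so the restrictions do what you claim. The comparison with the paper: for $\cY_n$ your argument is essentially the paper's, except that the paper uses the canonical quotient $F_\infty\to F_n$ (killing the extra generators) and applies $\sigma_n\otimes\sigma_n$ at the level of the full $C^*$-algebras rather than restricting it to the operator systems; since positivity in $M_p(\cY_n)$ is by definition tested in $M_p(C^*(F_n)\otimes_{\max}C^*(F_n))$, there is no need for the left inverse to carry $\cY_\infty$ into $\cY_n$, so the issue you flag (that $1\otimes w_j\notin\cX_n$) simply never arises in that formulation --- your fix of sending the superfluous generators to $w_1$ is a valid alternative way around it, needed only because you insist on a left inverse between the subsystems themselves. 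For $\cX_n$ the paper is shorter still: it invokes injectivity of the minimal operator system tensor product, so the chain $\cX_n\subseteq C^*(F_n)\otimes_{\min}C^*(F_n)\subseteq C^*(F_\infty)\otimes_{\min}C^*(F_\infty)$ gives the complete order embedding immediately, with no retraction at all. What your route buys is uniformity (one argument covers both $\min$ and $\max$, and indeed any functorial tensor product); what the paper's route buys is economy in the $\min$ case and the freedom to use the most natural quotient map in the $\max$ case.
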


\begin{proof}
Since the minimal operator system tensor product is injective and $\cX_n \subseteq C^*(F_n) \otimes_{\min} C^*(F_n) \subseteq C^*(F_{\infty}) \otimes_{\min} C^*(F_{\infty})$, the result immediately follows for $\cX_n$.  Now, the canonical embedding $F_n \hookrightarrow F_{\infty}$ and canonical quotient map $F_{\infty} \to F_n$ give rise to $*$-homomorphisms $\pi_n:C^*(F_n) \to C^*(F_{\infty})$ and $\sigma_n:C^*(F_{\infty}) \to C^*(F_n)$ with $\sigma_n \circ \pi_n=\id_{C^*(F_n)}$.  By functoriality of the maximal tensor product, $\pi_n \otimes \pi_n$ and $\sigma_n \otimes \sigma_n$ are ucp with respect to the maximal tensor product.  Therefore, the following diagram commutes:

\begin{center}
$\begin{tikzcd}[column sep=small]
& C^*(F_{\infty}) \otimes_{\max} C^*(F_{\infty}) \drar{\sigma_n \otimes \sigma_n} & \\
C^*(F_n) \otimes_{\max} C^*(F_n) \urar{\pi_n \otimes \pi_n}  \ar{rr}{\id} & & C^*(F_n) \otimes_{\max} C^*(F_n)
\end{tikzcd}$
\end{center}

Hence, $C^*(F_n) \otimes_{\max} C^*(F_n)$ is completely order isomorphic to the image of $C^*(F_n) \otimes C^*(F_n)$ in $C^*(F_{\infty}) \otimes_{\max} C^*(F_{\infty})$.  Restricting to $\cY_n$ shows that the formal identity map $\id:\cY_n \to \cY_{\infty}$ is a complete order isomorphism onto its range.
\end{proof}

We are now ready for the main result of this section.

\begin{mythe}
\label{connescrossnorms}
The following are equivalent.
\begin{enumerate}
\item
Connes' embedding problem has a positive answer.
\item
$UC_{qa}(n,m)=UC_{qc}(n,m)$ for all $n,m \geq 2$.
\item
$B_{qa}(n,m)=B_{qc}(n,m)$ for all $n,m \geq 2$.
\item
$B_{qa}(n,n)=B_{qc}(n,n)$ for all $n \geq 2$.
\item
$M_n \otimes_{qa} M_n=M_n \otimes_{qc} M_n$ isometrically for all $n \geq 2$.
\end{enumerate}
\end{mythe}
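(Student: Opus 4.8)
The strategy is to close a cycle of implications. The equivalence of (1) and (2) is exactly the theorem of Harris quoted above, so I take that as given. The implication (2) $\Rightarrow$ (3) is the ``simple but crucial observation'' already recorded in the text: if $UC_{qa}(n,m) = UC_{qc}(n,m)$ then passing to the sub-tuple indexed by $\{u_{ij} \otimes v_{k\ell}\}$ yields $B_{qa}(n,m) = B_{qc}(n,m)$. The implication (3) $\Rightarrow$ (4) is trivial (specialize $m = n$). The implication (3) $\Leftrightarrow$ (5), or at least (3) $\Rightarrow$ (5) and (5) $\Rightarrow$ (3), is essentially a restatement: by Theorem \ref{correlationnorms} the set $B_{qa}(n,n)$ is the closed unit ball of $\|\cdot\|_{qa}$ on $M_n \otimes M_n$ and $B_{qc}(n,n)$ is the closed unit ball of $\|\cdot\|_{qc}$, and two norms on a finite-dimensional space coincide iff their unit balls coincide; hence $B_{qa}(n,n) = B_{qc}(n,n)$ for all $n \geq 2$ is the same statement as $M_n \otimes_{qa} M_n = M_n \otimes_{qc} M_n$ isometrically for all $n \geq 2$, which gives (4) $\Leftrightarrow$ (5). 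So the real content is the single implication (4) $\Rightarrow$ (1) (equivalently (5) $\Rightarrow$ (1)): from equality of the $qa$ and $qc$ norms on every $M_n \otimes M_n$ one must recover a positive answer to Connes' embedding problem.

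For (4) $\Rightarrow$ (1) I would argue via Ozawa's criterion, Theorem \ref{satisfyingconnes}. Let $\cA$ be a separable $C^*$-algebra with a countable dense sequence $(u_i)$ of unitaries and a tracial state $\tau$; by Theorem \ref{satisfyingconnes} it suffices to show that $w_i \otimes w_j \mapsto \tau(u_i u_j^*)$ extends to a state on $\cX_\infty$. By Theorem \ref{tracesandstatesonmax} this map already extends to a state on $\cY_\infty$. By the preceding Lemma, $\cX_n \hookrightarrow \cX_\infty$ and $\cY_n \hookrightarrow \cY_\infty$ are complete order embeddings, so it is enough to work at finite levels $n$ and then take a weak-$*$ limit point: if for every $n$ the formal identity $\id : \cX_n \to \cY_n$ is an order isomorphism, then a state on $\cY_\infty$ restricts to a state on $\cY_n$, pulls back to a state on $\cX_n$ via $\id^{-1}$, and these are compatible as $n$ grows, so a limit gives a state on $\cX_\infty = \bigcup_n \cX_n$ with the required values. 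The last ingredient is Proposition \ref{correlationorderiso}: it says precisely that $B_{qa}(n,n) = B_{qc}(n,n)$ implies $\id : \cX_n \to \cY_n$ is an order isomorphism. So assuming (4), Proposition \ref{correlationorderiso} applies for every $n$, and the chain above delivers the state on $\cX_\infty$, hence (1) by Theorem \ref{satisfyingconnes}.

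Concretely I would lay out the proof as: (2) $\Rightarrow$ (3) by the sub-tuple observation; (3) $\Rightarrow$ (4) trivially; (4) $\Leftrightarrow$ (5) by Theorem \ref{correlationnorms} and finite-dimensionality of $M_n \otimes M_n$; (1) $\Leftrightarrow$ (2) by Harris's theorem; and finally (4) $\Rightarrow$ (1) by the Ozawa-criterion argument of the previous paragraph, invoking Proposition \ref{correlationorderiso}, the embedding Lemma, Theorem \ref{tracesandstatesonmax}, and Theorem \ref{satisfyingconnes}. The step I expect to require the most care is the passage from ``$\id : \cX_n \to \cY_n$ is an order isomorphism for each $n$'' to ``the desired map extends to a state on $\cX_\infty$'': one must check that the states produced at level $n$ can be chosen coherently (or, more cleanly, note that a single state on $\cY_\infty$ restricts to each $\cY_n$, and pulling back through the order isomorphisms $\id^{-1} : \cY_n \to \cX_n$ automatically gives a coherent family since all the maps are the formal identity), and that positivity is preserved in the limit — which is where one uses that each $\cX_n$ sits completely order isomorphically inside $\cX_\infty$ and that the unit ball of the dual is weak-$*$ compact. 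Everything else is bookkeeping with the results already in hand.
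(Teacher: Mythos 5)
Your proposal is correct and takes essentially the same route as the paper: (1)$\Leftrightarrow$(2) from Harris's theorem, the easy implications (2)$\Rightarrow$(3)$\Rightarrow$(4) and the unit-ball identification for (4)$\Leftrightarrow$(5), and (4)$\Rightarrow$(1) via Proposition \ref{correlationorderiso}, the embedding Lemma, and Ozawa's Theorems \ref{tracesandstatesonmax} and \ref{satisfyingconnes}. The only difference is bookkeeping in the last step: the paper assembles the state on $\cX_\infty$ by truncating the unitary sequence, extending each level-$n$ state by Hahn--Banach and passing to a pointwise (weak-$*$) limit, whereas you restrict the single state on $\cY_\infty$ level by level and invoke coherence of the formal identities together with the order embeddings $\cX_n\subseteq\cX_\infty$, which amounts to the same argument.
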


\begin{proof}
The equivalence of (1) and (2) is by \cite[Theorem 6.12]{harris}.  Clearly (2) implies (3) and (3) implies (4).  Two norms on the same space are equal if and only if their closed unit balls are identical, so (4) is equivalent to (5).  Hence, it remains to show that (4) implies (1).

Suppose that $B_{qa}(n,n)=B_{qc}(n,n)$ for all $n \geq 2$.  By Proposition \ref{correlationorderiso}, the formal identity map $\id:\cX_n \to \cY_n$ is an order isomorphism for all $n \geq 2$.  Let $\cA$ be a separable $C^{\ast}$-algebra with a countable dense sequence $(u_i)_{i=1}^{\infty}$ of unitaries, and let $\tau$ be a tracial state on $\cA$.  By Theorem \ref{tracesandstatesonmax}, the mapping $w_i \otimes w_j \mapsto \tau(u_iu_j^*)$ extends to a state on $\cY_{\infty}$.

Consider the $C^{\ast}$-algebra $\cA_n=C^*(u_1,...,u_n)$, which has a generating sequence of unitaries given by $(v_i)_{i=1}^{\infty}$, where $v_i=u_i$ for $i \leq n$ and $v_i=1$ for $i>n$.  Define $s_n:\cY_{\infty} \to \bC$ to be the unital, self-adjoint mapping given by $w_i \otimes w_j \mapsto \tau(v_iv_j^*)$.  Then $s_n$ is a state by Theorem \ref{tracesandstatesonmax}.  Restricting to $\cY_n$, $(s_n)_{|\cY_n}$ must be a state on $\cX_n$.  By the Hahn-Banach theorem, we may extend $(s_n)_{|\cY_n}$ to a state on $\cX_{\infty}$, which we will denote by $\omega_n$.  If $x \in \cX_{\infty}$, then $x=\lambda 1+\sum_{i,j=1}^n (\lambda_{ij} w_i \otimes w_j+\mu_{ij} w_i^* \otimes w_j^*)$ for some $n$, so that $x \in \cX_n$.  It follows that $\lim_{m \to \infty} \omega_m(x)=\omega_n(x)$.  Hence, $(\omega_m)_{m=1}^{\infty}$ converges pointwise to the linear map $\omega:\cX_{\infty} \to \bC$ given by $\omega(1)=1$, $\omega(w_i \otimes w_j)=\tau(u_iu_j^*)$ and $\omega(w_i^* \otimes w_j^*)=\overline{\tau(u_iu_j^*)}$.  Since the state space of $\cX_{\infty}$ is $w^*$-closed, $\omega$ is a state.  Therefore, the mapping $w_i \otimes w_j \mapsto \tau(u_iu_j^*)$ extends to a state on $\cX_{\infty}$, so that $(\pi_{\tau}(\cA)'',\tau)$ satisfies Connes' embedding problem.  Since $\cA$ was an arbitrary $C^{\ast}$-algebra with separable unitary group, we see that Connes' embedding problem must have a positive answer.  Hence, (4) implies (1).
\end{proof}

%%%%%%%%%%%%%%%%%%%%%%%%%%
%%%%%%%%%%%%%%%%%%%%%%%
%%%%%%%%%%%%%%%%%%%%%

\section{Separating the Unitary Correlation Sets}

In this section, we will use results from \cite{CLP} to show that $B_{qs}(n,m) \neq B_{qc}(n,m)$ for all $n,m \geq 2$; moreover, we will show that $B_{qs}(n,m)$ is not closed.  Attempts to obtain comparable results for the probabilistic quantum correlation sets given in Tsirelson's problem have a long history and are less definitive.  It was only recently shown by W. Slofstra \cite{slofstra} that there are $n,m \in \bN$ such that $C_{qs}(n,m) \neq C_{qc}(n,m)$, but for which pairs these sets are not equal is unknown.  
Since our paper was posted (arXiv:1612.02791), W. Slofstra has posted a new paper (arXiv:1703.08618) showing that there exist $n_1,n_2,k_1,k_2$ for which the set $C_{qs}(n_1,n_2,k_1,k_2)$ is not closed, where $n_1$ is the number of inputs for Alice, $n_2$ is the number of inputs for Bob, $k_1$ is the number of outputs for Alice, and $k_2$ is the number of outputs for Bob \cite{slofstra17}. (Slofstra's counterexample has $n_1=184$, $n_2=235$, $k_1=8$ and $k_2=2$.) The two analogous problems for unitary correlation sets have a negative answer for every $n,m \geq 2$, as we will see below.

Before we establish separations between some of the various unitary correlation sets, we require some terminology involving state embezzlement, as described in \cite{CLP}.  We give a somewhat simplified embezzlement framework here.  Suppose that Alice and Bob each have access to a finite-dimensional Hilbert space; we will always assume that Alice's space is $\bC^n$ and Bob's space is $\bC^m$ for some $n,m \geq 2$.  Suppose that Alice and Bob have access to a resource Hilbert space $\cR$, and are able to act on the system $\bC^n \otimes \cR \otimes \bC^m$ locally.  We consider whether there is a unit vector $\psi \in \cR$ such that Alice and Bob's operations can send $e_1 \otimes \psi \otimes e_1$ to $\sum_{i,j} \alpha_{ij} e_i \otimes \psi \otimes e_j$, where $\sum_{i,j} |\alpha_{ij}|^2=1$.  We will say that there is a \textbf{perfect embezzlement protocol in a finite-dimensional tensor product model} for $\sum_{i,j} \alpha_{ij} e_i \otimes e_j$ if there is a resource Hilbert space $\cR=\cR_A \otimes \cR_B$, operators $U_{ij} \in \cB(\cR_A)$ and $V_{k\ell} \in \cB(\cR_B)$ for $1 \leq i,j \leq n$ and $1 \leq k,\ell \leq m$ such that $U=(U_{ij})$ and $V=(V_{k\ell})$ are unitary on $\bC^n \otimes \cR_A$ and $\cR_B \otimes \bC^m$ respectively, with
 $$(U \otimes V)(e_1 \otimes \psi \otimes e_1)=\sum_{i,j} \alpha_{ij} e_i \otimes \psi \otimes e_j.$$
We will say that there is a \textbf{perfect embezzlement protocol in a tensor product model} for $\sum_{i,j} \alpha_{ij} e_i \otimes e_j$ if the same conditions are met as above, except that we drop the requirement that $\dim(\cR_A),\dim(\cR_B)<\infty$.  A \textbf{perfect embezzlement protocol in the commuting model} for $\sum_{i,j} \alpha_{ij} e_i \otimes e_j$ will have the same properties as above, except that we drop the assumption that $\cR$ decomposes as a tensor product, and instead assume that $U_{ij},V_{k\ell} \in \cB(\cR)$ for all $i,j,k,\ell$, and that $$(U \otimes I_m)(I_n \otimes V)=(I_n \otimes V)(U \otimes I_m).$$

The next two results relate perfect embezzlement and states on tensor products of $\cV_n$.

\begin{pro}
\emph{(Cleve-Liu-Paulsen, \cite{CLP})}
\label{embezzlecauchyschwarz}
Let $U_{ij},V_{k\ell} \in \cB(\cR)$ for $1 \leq i,j \leq n$ and $1 \leq k,\ell \leq m$ be such that $U=(U_{ij})$ and $V=(V_{k\ell})$ are unitary.  Then $(U \otimes I_m)(I_n \otimes V)=(I_n \otimes V)(U \otimes I_m)$ if and only if $U_{ij}V_{k\ell}=V_{k\ell}U_{ij}$ and $U_{ij}^* V_{k\ell}=V_{k\ell}U_{ij}^*$ for all $i,j,k,\ell$.
\end{pro}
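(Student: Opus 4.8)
The plan is to expand everything in matrix units and compute the two sides of the operator identity directly on $\bC^n \otimes \cR \otimes \bC^m$. Writing $U = \sum_{i,j} E_{ij} \otimes U_{ij}$ as an operator on $\bC^n \otimes \cR$ and $V = \sum_{k,\ell} V_{k\ell} \otimes E_{k\ell}$ as an operator on $\cR \otimes \bC^m$, one has $U \otimes I_m = \sum_{i,j} E_{ij} \otimes U_{ij} \otimes I_m$ and $I_n \otimes V = \sum_{k,\ell} I_n \otimes V_{k\ell} \otimes E_{k\ell}$, so that $(U \otimes I_m)(I_n \otimes V) = \sum_{i,j,k,\ell} E_{ij} \otimes U_{ij}V_{k\ell} \otimes E_{k\ell}$ while $(I_n \otimes V)(U \otimes I_m) = \sum_{i,j,k,\ell} E_{ij} \otimes V_{k\ell}U_{ij} \otimes E_{k\ell}$. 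Since these are written in terms of the matrix-unit bases of $M_n$ and $M_m$, the two operators coincide if and only if the coefficients agree, i.e. $U_{ij}V_{k\ell} = V_{k\ell}U_{ij}$ for all $i,j,k,\ell$. This already settles the ``if'' direction: the hypotheses include in particular $U_{ij}V_{k\ell} = V_{k\ell}U_{ij}$ for all indices, and the computation above then yields the commutation of $U \otimes I_m$ with $I_n \otimes V$.

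For the ``only if'' direction, the same expansion shows the commutation relation forces $U_{ij}V_{k\ell} = V_{k\ell}U_{ij}$ for all $i,j,k,\ell$; it remains to deduce the adjoint relations $U_{ij}^*V_{k\ell} = V_{k\ell}U_{ij}^*$. The key point is that $U \otimes I_m$ and $I_n \otimes V$ are both \emph{unitary}, being tensor products of unitaries, together with the elementary fact that if two unitaries $A$ and $B$ satisfy $AB = BA$, then also $A^*B = BA^*$ (multiply $AB = BA$ on the left and on the right by $A^*$). Applying this with $A = U \otimes I_m$ and $B = I_n \otimes V$ gives $(U^* \otimes I_m)(I_n \otimes V) = (I_n \otimes V)(U^* \otimes I_m)$. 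Since $U^*$ is, as an operator on $\bC^n \otimes \cR$, the matrix whose $(i,j)$-entry is $U_{ji}^*$, running $U^* \otimes I_m$ through the matrix-unit expansion exactly as above produces $U_{ij}^*V_{k\ell} = V_{k\ell}U_{ij}^*$ for all $i,j,k,\ell$, completing the proof.

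The argument is short, and the only real pitfall is keeping the three tensor legs $\bC^n$, $\cR$, $\bC^m$ and the two families of matrix units straight (in particular noting that passing to $U^*$ transposes the indices). The step I would flag as the conceptual heart is the passage to the adjoint relations: the entrywise commutations $U_{ij}V_{k\ell} = V_{k\ell}U_{ij}$ do \emph{not} by themselves imply $U_{ij}^*V_{k\ell} = V_{k\ell}U_{ij}^*$, but they do once one assembles the $U_{ij}$ and $V_{k\ell}$ into the block unitaries and uses that commuting unitaries generate a commutative $*$-algebra — which is exactly why the statement is phrased at the level of $U \otimes I_m$ and $I_n \otimes V$ rather than entrywise.
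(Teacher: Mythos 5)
Your proof is correct: the block expansion $(U\otimes I_m)(I_n\otimes V)=\sum_{i,j,k,\ell}E_{ij}\otimes U_{ij}V_{k\ell}\otimes E_{k\ell}$ together with uniqueness of the block coefficients gives the equivalence with entrywise commutation, and the elementary observation that commuting unitaries also $*$-commute (multiplying $AB=BA$ by $A^*$ on both sides) correctly yields the adjoint relations $U_{ij}^*V_{k\ell}=V_{k\ell}U_{ij}^*$ after noting that the $(i,j)$ entry of $U^*$ is $U_{ji}^*$. The paper states this result without reproducing a proof (it cites Cleve--Liu--Paulsen), and your argument is essentially the standard one used there, so there is nothing to flag.
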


The following is a slight extension of a result from \cite{CLP}.

\begin{pro}
\emph{(Cleve-Liu-Paulsen, \cite{CLP})}
A perfect embezzlement protocol in the commuting model exists for $\sum_{i,j} \alpha_{ij} e_i \otimes e_j$ if and only if there is a state $s \in \cS(\cU_{nc}(n) \otimes_{\max} \cU_{nc}(m))$ such that $s(u_{i1} \otimes v_{j1})=\alpha_{ij}$.
\end{pro}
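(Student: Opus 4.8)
The plan is to prove both directions by passing between a commuting pair of unital $\ast$-homomorphisms of $\cU_{nc}(n)$ and $\cU_{nc}(m)$ on a common Hilbert space and a state on their maximal tensor product, with the equality case of the Cauchy--Schwarz inequality as the crucial last link. The computation underlying everything is this: if $U_{ij},V_{k\ell}\in\cB(\cR)$ are such that $U=(U_{ij})$ is unitary on $\bC^n\otimes\cR$ and $V=(V_{k\ell})$ is unitary on $\cR\otimes\bC^m$, then expanding coordinatewise gives
$$(U\otimes I_m)(I_n\otimes V)(e_1\otimes\psi\otimes e_1)=\sum_{i,j}e_i\otimes U_{i1}V_{j1}\psi\otimes e_j.$$
Consequently a perfect embezzlement protocol in the commuting model for $\sum_{i,j}\alpha_{ij}e_i\otimes e_j$ amounts to a Hilbert space $\cR$, a unit vector $\psi\in\cR$, and unitaries $U=(U_{ij})$, $V=(V_{k\ell})$ with entries in $\cB(\cR)$ satisfying the commuting condition together with the vector identities $U_{i1}V_{j1}\psi=\alpha_{ij}\psi$ for all $i,j$.

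For the forward implication, assume such a protocol is given. By Proposition \ref{embezzlecauchyschwarz} the entries of $U$ and $V$, together with their adjoints, commute, so the universal property of $\cU_{nc}(n)$ and of $\cU_{nc}(m)$ yields unital $\ast$-homomorphisms $\pi\colon\cU_{nc}(n)\to\cB(\cR)$ and $\rho\colon\cU_{nc}(m)\to\cB(\cR)$ with $\pi(u_{ij})=U_{ij}$, $\rho(v_{k\ell})=V_{k\ell}$ and commuting ranges. By the definition of the commuting tensor product the product map $\pi\cdot\rho$ is ucp on $\cU_{nc}(n)\otimes_c\cU_{nc}(m)$, hence ucp on $\cU_{nc}(n)\otimes_{\max}\cU_{nc}(m)$ because $c\leq\max$, so $s:=\la(\pi\cdot\rho)(\cdot)\psi,\psi\ra$ is a state on $\cU_{nc}(n)\otimes_{\max}\cU_{nc}(m)$. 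The protocol identities then give $s(u_{i1}\otimes v_{j1})=\la U_{i1}V_{j1}\psi,\psi\ra=\la\alpha_{ij}\psi,\psi\ra=\alpha_{ij}$, as required.

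For the converse, let $s$ be a state on $\cU_{nc}(n)\otimes_{\max}\cU_{nc}(m)$ with $s(u_{i1}\otimes v_{j1})=\alpha_{ij}$. Since this operator system is an operator subsystem of the $C^{\ast}$-algebraic maximal tensor product of $\cU_{nc}(n)$ and $\cU_{nc}(m)$ \cite{KPTT}, Arveson's extension theorem lets us view $s$ as a state on that $C^{\ast}$-algebra, and its GNS construction supplies a representation $\Pi$ on a Hilbert space $\cR$ and a unit vector $\psi$ with $s=\la\Pi(\cdot)\psi,\psi\ra$. Set $U_{ij}=\Pi(u_{ij}\otimes 1)$ and $V_{k\ell}=\Pi(1\otimes v_{k\ell})$; then $U=(U_{ij})$ and $V=(V_{k\ell})$ are unitary with $\ast$-commuting entries, so $(U\otimes I_m)(I_n\otimes V)=(I_n\otimes V)(U\otimes I_m)$ by Proposition \ref{embezzlecauchyschwarz}. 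It remains to upgrade the scalar equalities $\la U_{i1}V_{j1}\psi,\psi\ra=\alpha_{ij}$ to the vector equalities $U_{i1}V_{j1}\psi=\alpha_{ij}\psi$. Writing $\xi=(U\otimes I_m)(I_n\otimes V)(e_1\otimes\psi\otimes e_1)=\sum_{i,j}e_i\otimes U_{i1}V_{j1}\psi\otimes e_j$ and $\eta=\sum_{i,j}\alpha_{ij}e_i\otimes\psi\otimes e_j$, unitarity of $U\otimes I_m$ and $I_n\otimes V$ gives $\|\xi\|=1$, the normalization $\sum_{i,j}|\alpha_{ij}|^2=1$ gives $\|\eta\|=1$, and $\la\xi,\eta\ra=\sum_{i,j}\overline{\alpha_{ij}}\la U_{i1}V_{j1}\psi,\psi\ra=\sum_{i,j}|\alpha_{ij}|^2=1$. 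Thus $\la\xi,\eta\ra=\|\xi\|\,\|\eta\|$ with both vectors of norm one, so the equality case of the Cauchy--Schwarz inequality forces $\xi=\eta$; reading off coordinates gives $U_{i1}V_{j1}\psi=\alpha_{ij}\psi$, that is, a perfect embezzlement protocol in the commuting model.

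The one step that carries real content is this final Cauchy--Schwarz argument: the hypothesis only constrains the inner products $\la U_{i1}V_{j1}\psi,\psi\ra$, and it is the rigidity of the equality case that turns these into the exact transformation of the state vector required by the definition of embezzlement. The rest --- the universal property of $\cU_{nc}$, the definitions of $\otimes_c$ and $\otimes_{\max}$, the identification of the operator-system and $C^{\ast}$-algebraic maximal tensor products, and GNS together with Arveson's extension theorem --- is routine.
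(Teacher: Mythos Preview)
Your proof is correct and follows essentially the same route as the paper: both directions pass between commuting representations and a state on the max tensor product, and the converse hinges on the equality case of Cauchy--Schwarz to upgrade the inner-product constraints to vector identities. The only cosmetic differences are that the paper invokes the $B_{qc}$ correspondence (itself a GNS argument) in place of your explicit GNS construction, and applies Cauchy--Schwarz coordinatewise via $\sum_{i,j}|\la U_{i1}V_{j1}\psi,\psi\ra|^2\leq\sum_{i,j}\|U_{i1}V_{j1}\psi\|^2$ rather than your single global inequality $\la\xi,\eta\ra=\|\xi\|\,\|\eta\|$; both packagings yield the same conclusion.
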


\begin{proof}
Suppose that $U_{ij},V_{k\ell} \in \cB(\cR)$ are such that $U=(U_{ij})$ and $V=(V_{k\ell})$ are unitary and $\psi \in \cR$ is a unit vector such that $(U \otimes I_m)(I_n \otimes V)=(I_n \otimes V)(U \otimes I_m)$ and $(U \otimes I_m)(I_n \otimes V)(e_1 \otimes \psi \otimes e_1)=\sum_{i,j} \alpha_{ij} e_i \otimes \psi \otimes e_j$.  Then there is a unital $*$-homomorphism $\pi:\cU_{nc}(n) \otimes_{\max} \cU_{nc}(m) \to \cB(\cR)$ such that $\pi(u_{ij} \otimes v_{k\ell})=U_{ij}V_{k\ell}$.  Define the state $s:\cU_{nc}(n) \otimes_{\max} \cU_{nc}(m) \to \bC$ such that $u_{ij} \otimes v_{k\ell} \mapsto \la U_{ij}V_{k\ell}\psi,\psi \ra$.  Since $\cV_n \otimes_c \cV_m$ is completely order isomorphic to its inclusion in $\cU_{nc}(n) \otimes_{\max} \cU_{nc}(m)$ \cite{harris}, we obtain a state $s:\cV_n \otimes_c \cV_m \to \bC$ such that $$s(u_{i1} \otimes v_{j1})=\la U_{i1}V_{j1}\psi,\psi \ra=\alpha_{ij}.$$
Conversely, suppose that such a state $s$ exists.  Then $(s(u_{ij} \otimes v_{k\ell})) \in B_{qc}(n,m)$, so there are unitaries $U=(U_{ij})$ and $V=(V_{k\ell})$ with $U_{ij},V_{k\ell} \in \cB(\cR)$ and $U_{ij}V_{k\ell}=V_{k\ell}U_{ij}$, and a unit vector $\psi \in \cR$ such that, for each $i,j,k,\ell$, we have $s(u_{ij} \otimes v_{k\ell})=\la U_{ij}V_{k\ell}\psi,\psi \ra$. Now, 
\begin{align*}
1=\sum_{i,j} |\alpha_{ij}|^2&=\sum_{i,j} |\la U_{i1}V_{j1}\psi,\psi \ra|^2 \\
&\leq \sum_{i,j} \|U_{i1}V_{j1}\psi \|^2=\left\| (U \otimes I_m)(I_n \otimes V) \begin{pmatrix} \psi \\ 0 \\ \vdots \\ 0 \end{pmatrix} \right\|^2=1, \\
\end{align*}
using the fact that $(U \otimes I_m)(I_n \otimes V)$ is unitary.  Therefore, $|\la U_{i1}V_{j1} \psi,\psi \ra|=\| U_{i1} V_{j1} \psi \|$ for all $i,j$.  Since $\la U_{i1}V_{j1} \psi,\psi \ra=\alpha_{ij}$, by the Cauchy-Schwarz inequality, we must have $U_{i1}V_{j1}\psi=\alpha_{ij}\psi$.  Therefore, we observe that $$(U \otimes I_m)(I_n \otimes V)(e_1 \otimes \psi \otimes e_1)=\sum_{i,j} \alpha_{ij} e_i \otimes \psi \otimes e_j,$$
so a perfect embezzlement protocol exists in the commuting model for the unit vector $\sum_{i,j} \alpha_{ij} e_i \otimes e_j$.
\end{proof}

We now give a proof that in the commuting model, any norm one vector in $\bC^n \otimes \bC^m$ can be perfectly embezzled.  In particular, we give an alternate proof that any norm one vector in $\bC^n \otimes \bC^m$ can be approximately embezzled; i.e., one can use unitaries $(U_{ij})$ and $(V_{k\ell})$ to obtain the mapping $e_1 \otimes \psi \otimes e_1 \mapsto \sum_{i,j} \alpha_{ij} e_i \otimes \psi_{\ee} \otimes e_j$, where $|\la \psi,\psi_{\ee}\ra| \geq 1-\ee$ for a small $\ee>0$.  This fact was first proved in \cite{vandam}, and was reproved in \cite{CLP} for the vector $\frac{1}{\sqrt{n}} \sum_{i=1}^n e_i \otimes e_i$.  Our method of proof here draws on a simplification due to Richard Cleve; we kindly thank him for sharing this simplification.

\begin{mythe}
\label{embezzlement}
Let $n,m \geq 2$ and let $\sum_{i=1}^n \sum_{j=1}^m \alpha_{ij} e_i \otimes e_j \in \bC^n \otimes \bC^m$ have norm $1$.  There is a state $s \in \cS(\cV_n \otimes_{\min} \cV_m)$ such that $s(u_{i1} \otimes v_{j1})=\alpha_{ij}$ for $1 \leq i \leq n$ and $1 \leq j \leq m$.  In fact, this state can be taken such that $s(u_{ij} \otimes v_{k\ell})=0$ whenever $j \neq 1$ or $\ell \neq 1$.
\end{mythe}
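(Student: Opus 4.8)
The plan is to realize the desired state as a weak-$*$ cluster point of finite-dimensional tensor-product correlations; in other words, to produce genuine approximate embezzlement protocols and then pass to the limit. Since $\cV_n$ and $\cV_m$ are finite-dimensional, the operator system $\cV_n \otimes_{\min} \cV_m$ is finite-dimensional and its state space is weak-$*$ compact. It therefore suffices to construct, for each $N \in \bN$, finite-dimensional Hilbert spaces $\cH_A^{(N)}, \cH_B^{(N)}$, unitaries $U^{(N)} = (U_{ij}^{(N)}) \in M_n(\cB(\cH_A^{(N)}))$ and $V^{(N)} = (V_{k\ell}^{(N)}) \in M_m(\cB(\cH_B^{(N)}))$, and a unit vector $\psi_N \in \cH_A^{(N)} \otimes \cH_B^{(N)}$, such that $\langle (U_{i1}^{(N)} \otimes V_{j1}^{(N)}) \psi_N, \psi_N \rangle \to \alpha_{ij}$ for all $i,j$, while $\langle (U_{ij}^{(N)} \otimes V_{k\ell}^{(N)}) \psi_N, \psi_N \rangle \to 0$ whenever $j \neq 1$ or $\ell \neq 1$. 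Each such triple induces, via the universal property of $\cV_n$ and $\cV_m$ together with functoriality of $\min$, a state $s_N$ on $\cV_n \otimes_{\min} \cV_m$ with $s_N(u_{ij}\otimes v_{k\ell})=\langle (U_{ij}^{(N)}\otimes V_{k\ell}^{(N)})\psi_N,\psi_N\rangle$, and any weak-$*$ cluster point of $(s_N)$ then has the required values.

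I would handle the off-diagonal entries by a phase-averaging trick, so that the embezzlement construction itself need only approximate $\alpha_{ij}$ in the first-column slots $u_{i1} \otimes v_{j1}$. Given unitaries $U, V$ and a unit vector $\psi$ realizing $\langle (U_{i1} \otimes V_{j1})\psi, \psi\rangle \approx \alpha_{ij}$, replace $U$ by $U(D_{\vec\beta} \otimes I)$ and $V$ by $V(D_{\vec\gamma} \otimes I)$, where $D_{\vec\beta} = \mathrm{diag}(1, e^{i\beta_2}, \dots, e^{i\beta_n}) \in M_n$ and $D_{\vec\gamma} = \mathrm{diag}(1, e^{i\gamma_2}, \dots, e^{i\gamma_m}) \in M_m$ with $\beta_1 = \gamma_1 = 0$. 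This is a local unitary, so the new correlation still arises from a finite-dimensional tensor-product model, and its $(i,j),(k,\ell)$ entry is multiplied by $e^{i(\beta_j + \gamma_\ell)}$; averaging the corresponding states over $(\vec\beta, \vec\gamma) \in [0,2\pi]^{n-1} \times [0,2\pi]^{m-1}$ — the average is again a state on $\cV_n \otimes_{\min} \cV_m$ by convexity and weak-$*$ compactness — leaves the first-column slots unchanged and kills $u_{ij} \otimes v_{k\ell}$ whenever $j \neq 1$ or $\ell \neq 1$. Hence it is enough to carry out the approximate embezzlement in the first columns.

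The remaining, and main, task is the embezzlement estimate itself, for which I would use the van Dam--Hayden embezzling family \cite{vandam} (cf. \cite{CLP}, in the streamlined form due to Cleve). As in the Cauchy--Schwarz argument behind Proposition \ref{embezzlecauchyschwarz}, a perfect protocol would force $(U_{i1} \otimes V_{j1})\psi = \alpha_{ij}\psi$, i.e., the local isometries formed from the first columns of $U$ and $V$ must implement $\psi \mapsto \phi \otimes \psi$, where $\phi = \sum_{i,j}\alpha_{ij} e_i \otimes e_j$; and for any finite-dimensional $U, V, \psi$ one checks the estimate $|\langle (U_{i1} \otimes V_{j1})\psi,\psi\rangle - \alpha_{ij}| \leq \|(U \otimes V)(e_1 \otimes \psi \otimes e_1) - \phi \otimes \psi\|$. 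Writing $\phi$ in Schmidt form and forming, for each $N$, a resource built from interleaved tensor powers of $\bC^n$ and $\bC^m$ together with the van Dam--Hayden vector $\mu_N$, the operation ``append one copy of $\phi$'' is implemented by local isometries up to an error $\ee_N \to 0$ (of order $1/\log N$); extending these isometries to the first columns of finite-dimensional unitaries $U^{(N)}, V^{(N)}$ and taking $\psi_N = \mu_N$ makes the right-hand side above tend to $0$, which is exactly what the first paragraph requires. The hard part is precisely this quantitative statement — that the post-operation fidelity with $\phi \otimes \mu_N$ tends to $1$, uniformly in the target $\phi$ — which is the content of \cite{vandam}; granting it (or reproving it via Cleve's simplification), the reductions in the previous two paragraphs are routine.
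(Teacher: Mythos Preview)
Your proposal is correct and shares the paper's core strategy: both build finite-dimensional tensor-product correlations via the van Dam--Hayden embezzling family (in Cleve's simplified form) and pass to a weak-$*$ cluster point. The paper carries this out explicitly, taking the resource vector $\psi = h_1 \otimes \cdots \otimes h_r$ where $h_0,\ldots,h_r$ interpolate along a geodesic arc in $\bC^n \otimes \bC^m$ from $e_1 \otimes e_1$ to $\phi$, and letting $U$ (resp.\ $V$) be the cyclic right shift on $(\bC^n)^{\otimes(r+1)}$ (resp.\ $(\bC^m)^{\otimes(r+1)}$); the fidelity computation then reads $\la \psi,\psi_r\ra = \cos(\theta/r)^r \to 1$, which matches the estimate you extract from the inequality $|\la (U_{i1}\otimes V_{j1})\psi,\psi\ra - \alpha_{ij}| \leq \|(U\otimes V)(e_1\otimes\psi\otimes e_1) - \phi\otimes\psi\|$.

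Where you genuinely diverge is in forcing $s(u_{ij}\otimes v_{k\ell})=0$ for $j\neq 1$ or $\ell\neq 1$. The paper computes these entries directly in the cyclic-shift model and bounds them by $|\la e_j\otimes e_\ell, h_1\ra|$, which tends to $0$ since $h_1 \to e_1\otimes e_1$. Your phase-averaging with diagonal unitaries $D_{\vec\beta}, D_{\vec\gamma}$ (fixing the first diagonal entry) is a cleaner, protocol-independent device: it completely decouples the vanishing of the off-first-column slots from the embezzlement estimate, so any approximate-embezzlement scheme would do, and you never need to look inside the construction. The paper's route is more self-contained and avoids the integration step, but yours is more modular; both are valid.
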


\begin{proof}
We may reduce to the case when $\alpha_{11} \geq 0$.  Indeed, we may choose $z \in \bT$ such that $z \alpha_{11} \geq 0$. Then we can first find $s' \in \cS(\cV_n \otimes_{\min} \cV_m)$ such that $s'(u_{i1} \otimes v_{j1})=z\alpha_{ij}$ for $1 \leq i \leq n$ and $1 \leq j \leq m$.  As the matrix $(\overline{z}u_{ij})$ is also unitary, the map $s:\cV_n \otimes_{\min} \cV_m \to \bC$ given by $s(u_{ij} \otimes v_{k\ell})=s'(\overline{z}u_{ij} \otimes v_{k\ell})=\overline{z}s'(u_{ij} \otimes v_{k\ell})$ also extends to a state on $\cV_n \otimes_{\min} \cV_m$; moreover, $s(u_{i1} \otimes v_{j1})=\alpha_{ij}$ and $s(u_{ij} \otimes v_{k\ell})=0$ whenever $j \neq 1$ or $\ell \neq 1$.  Hence, we may assume without loss of generality that $\alpha_{11} \geq 0$.

Let $r \in \bN$.  Define $h_0=e_1 \otimes e_1$ and $h_r=\sum_{i,j} \alpha_{ij} e_i \otimes e_j$.  Since $\la h_0,h_r \ra=\alpha_{11} \geq 0$, it follows that $\bR h_0+\bR h_r$ is a two-dimensional real Hilbert space, so there is a unitary $R:\bR h_0+\bR h_r \to \bR^2$ such that $R(h_0)=e_1$.  Since $\|h_0\|=\|h_r\|=1$, there is an orthogonal matrix $W \in M_2$ such that $We_1=Rh_r$.  It is clear that $W$ must be a rotation of the form $W=\begin{pmatrix} \cos \theta & -\sin \theta \\ \sin \theta & \cos \theta \end{pmatrix}$ for some $\theta \in [0,2\pi)$.  For $1 \leq j \leq r-1$, let $h_j=R^{-1}W_je_1$, where $W_j$ refers to the rotation $\begin{pmatrix} \cos \left( \frac{j\theta}{r} \right) & -\sin \left( \frac{j\theta}{r} \right) \\ \sin \left( \frac{j\theta}{r} \right) & \cos \left( \frac{j\theta}{r} \right) \end{pmatrix}$.  Then $W_pW_q=W_{p+q}$ and $W_p^T=W_{-p}$ for all $p,q \in \bZ$, so that, for $1 \leq j \leq r$,
$$\la h_j,h_{j-1} \ra=\left\la R^{-1}W_je_1,R^{-1}W_{j-1}e_1 \right\ra=\left\la W_1e_1,e_1 \right\ra=\cos \left(\frac{\theta}{r} \right),$$
Let $\psi=h_1 \otimes \cdots \otimes h_r \in (\bC^n \otimes \bC^m)^{\otimes r}$.  Define $U \in \cB((\bC^n)^{\otimes (r+1)})$ by cyclically shifting the tensors to the right by one position; i.e., for $x_0 \otimes \cdots \otimes x_r \in (\bC^n)^{\otimes (r+1)}$, we let $$U(x_0 \otimes \cdots \otimes x_r)=x_r \otimes x_0 \otimes x_1 \otimes \cdots \otimes x_{r-1}.$$
Then $U$ is unitary and can be identified as a unitary in $M_n(\cB((\bC^n)^{\otimes r}))$.  We define $V$ in the same way on $(\bC^m)^{\otimes (r+1)}$.  Then $U \otimes V$ is the unitary on $(\bC^n \otimes \bC^m)^{\otimes (r+1)}$ that permutes the copies of $\bC^n \otimes \bC^m$ by the cyclic right shift.  In particular, we have $$(U \otimes V)( (e_1 \otimes e_1) \otimes \psi)=h_r \otimes \psi_r,$$
where $\psi_r=h_0 \otimes \cdots \otimes h_{r-1}$.  In general, $$(U \otimes V)((e_i \otimes e_j) \otimes \psi)=h_r \otimes (e_i \otimes e_j) \otimes h_1 \otimes \cdots \otimes h_{r-1}.$$ There is a $*$-homomorphism $\pi:\cU_{nc}(n) \otimes_{\min} \cU_{nc}(m) \to \cB((\bC^n \otimes \bC^m)^{\otimes (r+1)})$ such that $$(\pi(u_{ij} \otimes v_{k\ell}))_{(i,j),(k,\ell)}=U \otimes V.$$
Define a state $s_r:\cV_n \otimes_{\min} \cV_m \to \bC$ by $$s_r(x)=\la \pi(x)\psi,\psi \ra, \, \forall x \in \cV_n \otimes_{\min} \cV_m.$$
Then $s_r(u_{i1} \otimes v_{j1})=\alpha_{ij} \la \psi,\psi_r \ra$ for $1 \leq i \leq n$ and $1 \leq j \leq m$.  We will show that $|\la \psi,\psi_r \ra|$ tends to $1$ as $r$ becomes large.

It is readily checked that $$\la \psi,\psi_r \ra=\la h_1,h_0 \ra \la h_2,h_1 \ra \cdots \la h_r,h_{r-1} \ra=\cos \left( \frac{\theta}{r} \right)^r.$$
In particular, $|\la \psi,\psi_r \ra|$ tends to $1$ as $r$ becomes large.  By dropping to a subsequence if necessary, we may assume that $(s_r)_{r=1}^{\infty}$ is a sequence of states converging pointwise.  Then $s:\cV_n \otimes_{\min} \cV_m \to \bC$ given by $s(x)=\lim_{r \to \infty} s_r(x)$ is a state on $\cV_n \otimes_{\min} \cV_m$ such that $s(u_{i1} \otimes v_{j1})=\alpha_{ij}$ for $1 \leq i \leq n$ and $1 \leq j \leq m$.

It remains to show that $s(u_{ij} \otimes v_{k\ell})=0$ whenever $j \neq 1$ or $\ell \neq 1$.  Consider the state $s_r$ above, corresponding to the unitaries $U \in \cB((\bC^n)^{\otimes (r+1)})$ and $V \in \cB((\bC^m)^{\otimes (r+1)})$ above.  Then $$(U \otimes V)((e_j \otimes e_{\ell}) \otimes \psi)=h_r \otimes (e_j \otimes e_{\ell}) \otimes h_1 \otimes \cdots \otimes h_{r-1}.$$
Note that $s_r(u_{ij} \otimes v_{k\ell})$ corresponds to the quantity $$\la (U \otimes V)((e_j \otimes e_{\ell}) \otimes \psi),(e_i \otimes e_k) \otimes \psi \ra.$$
Therefore, $$s_r(u_{ij} \otimes v_{k\ell})=\alpha_{ik} \la e_j \otimes e_{\ell},h_1 \ra \la h_1,h_2 \ra \cdots \la h_{r-1},h_r \ra.$$
Since $|\la h_1,h_2 \ra \cdots \la h_{r-1},h_r \ra| \leq 1$, we have $$|s_r(u_{ij} \otimes v_{k\ell})| \leq |\alpha_{ik}| |\la e_j \otimes e_{\ell},h_1 \ra| \leq |\la e_j \otimes e_{\ell},h_1 \ra|.$$
The angle between $h_1$ and $e_1 \otimes e_1$ is $\frac{\theta}{r}$, so it follows that $\|h_1-e_1 \otimes e_1\| \to 0$.  Thus, $|\la e_j \otimes e_{\ell},h_1 \ra| \to 0$ if $j \neq 1$ or $\ell \neq 1$.  This shows that $s_r(u_{ij} \otimes v_{k\ell}) \to 0$ if $j \neq 1$ or $\ell \neq 1$.  Hence, $s(u_{ij} \otimes v_{k\ell})=0$ when $j \neq 1$ or $\ell \neq 1$, which completes the proof.
\end{proof}

Using the embezzlement framework, we can distinguish the unitary correlation sets for $qs$ and $qc$  for all $n,m \geq 2$ and show that the unitary $qs$ sets are not closed.  The proof uses techniques found in \cite[Theorem 2.1]{CLP}.

\begin{cor}
For every $n,m \geq 2$, $B_{qs}(n,m) \neq B_{qa}(n,m)$.  In particular, $UC_{qs}(n,m) \neq UC_{qa}(n,m)$, and neither $UC_{qs}(n,m)$ nor $B_{qs}(n,m)$ are closed.
\end{cor}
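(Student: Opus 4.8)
The plan is to exhibit a single matrix lying in $B_{qa}(n,m)=B_{qmin}(n,m)$ but not in $B_{qs}(n,m)$, the impossibility in the spatial model coming from a perfect‑embezzlement obstruction in the style of \cite[Theorem 2.1]{CLP}. I would fix, for all $n,m\geq 2$, the norm‑one vector $\xi=\tfrac{1}{\sqrt 2}(e_1\otimes e_1+e_2\otimes e_2)$, whose coefficients satisfy $\alpha_{11}=\alpha_{22}=\tfrac{1}{\sqrt 2}$ and $\alpha_{ij}=0$ otherwise, and apply Theorem \ref{embezzlement} to obtain a state $s\in\cS(\cV_n\otimes_{\min}\cV_m)$ with $s(u_{i1}\otimes v_{j1})=\alpha_{ij}$ and $s$ vanishing on the remaining generators of $\cG_{n,m}$; then $X:=(s(u_{ij}\otimes v_{k\ell}))\in B_{qmin}(n,m)=B_{qa}(n,m)$.

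Next I would assume, toward a contradiction, that $X\in B_{qs}(n,m)$, and write $X=(\langle (U_{ij}\otimes V_{k\ell})\psi,\psi\rangle)$ with $U=(U_{ij})$ unitary on $\cH_A$, $V=(V_{k\ell})$ unitary on $\cH_B$, and $\psi$ a unit vector in $\cH_A\otimes\cH_B$. Since the first columns of $U$ and $V$ are isometries, $\sum_i U_{i1}^*U_{i1}=I$ and $\sum_j V_{j1}^*V_{j1}=I$, so $\sum_{i,j}\|(U_{i1}\otimes V_{j1})\psi\|^2=1=\sum_{i,j}|\alpha_{ij}|^2$; together with the termwise Cauchy--Schwarz bound $|\langle (U_{i1}\otimes V_{j1})\psi,\psi\rangle|\leq\|(U_{i1}\otimes V_{j1})\psi\|$ this forces equality in every term, i.e. $(U_{i1}\otimes V_{j1})\psi=\alpha_{ij}\psi$ for all $i,j$ (precisely a perfect embezzlement protocol for $\xi$ in a spatial model). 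Expanding $I=\sum_i U_{i1}^*U_{i1}$ inside these identities gives $(I\otimes V_{11})\psi=\tfrac{1}{\sqrt 2}(U_{11}^*\otimes I)\psi$ and $(I\otimes V_{21})\psi=\tfrac{1}{\sqrt 2}(U_{21}^*\otimes I)\psi$.

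Let $W\subseteq\cH_A$ be the (nonzero) support of $\psi$ on the $\cH_A$ side, i.e. the closed span of the Schmidt vectors of $\psi$ on Alice's side. Applying $U_{11}\otimes I$ to the first of these identities and using $(U_{11}\otimes V_{11})\psi=\tfrac{1}{\sqrt 2}\psi$ yields $(U_{11}U_{11}^*\otimes I)\psi=\psi$; since $U_{11}U_{11}^*$ is a positive contraction, $W$ lies in its eigenspace for the eigenvalue $1$, so $U_{11}^*$ is isometric on $W$. Because $I\otimes V_{11}$ does not change the $\cH_A$-support, the same identity shows $U_{11}^*(W)=W$, so $U_{11}^*|_W$ is a unitary of $W$, hence so is $U_{11}|_W$, and in particular $\|U_{11}w\|=\|w\|$ for all $w\in W$. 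The identical argument with $V_{21}$, $U_{21}$ gives $\|U_{21}w\|=\|w\|$ for all $w\in W$. But then, for $0\neq w\in W$, $\|w\|^2=\sum_{i=1}^n\|U_{i1}w\|^2\geq\|U_{11}w\|^2+\|U_{21}w\|^2=2\|w\|^2$, which is absurd. Hence $X\notin B_{qs}(n,m)$, so $B_{qs}(n,m)\subsetneq B_{qa}(n,m)$.

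Finally I would read off the remaining assertions: if $UC_{qs}(n,m)=UC_{qa}(n,m)$ then their compressions would agree, i.e. $B_{qs}(n,m)=B_{qa}(n,m)$, which is false, so $UC_{qs}(n,m)\neq UC_{qa}(n,m)$; and since $\overline{UC_{qs}(n,m)}=UC_{qa}(n,m)$, and compressing this equality (compression is continuous and $B_{qa}(n,m)$ is compact by Theorem \ref{correlationnorms}) gives $\overline{B_{qs}(n,m)}=B_{qa}(n,m)$, neither $UC_{qs}(n,m)$ nor $B_{qs}(n,m)$ is closed. I expect the genuinely delicate step to be the passage from membership in $B_{qs}(n,m)$ to the rigid identities $(U_{i1}\otimes V_{j1})\psi=\alpha_{ij}\psi$ and then to the subspace $W$: because the corners $U_{i1}$ need not be normal, have closed range, or commute with anything, the argument must be phrased throughout in terms of the support subspace of $\psi$ rather than in terms of spectral projections on all of $\cH_A$ — and this is exactly where the tensor‑product hypothesis (which fails in the commuting model, where a perfect protocol does exist) is essential.
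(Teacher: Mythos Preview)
Your overall strategy matches the paper's: produce $X\in B_{qa}(n,m)$ via Theorem~\ref{embezzlement}, observe that membership in $B_{qs}(n,m)$ would force a perfect embezzlement protocol in the tensor model, and derive a contradiction. Your choice of $\xi=\tfrac{1}{\sqrt2}(e_1\otimes e_1+e_2\otimes e_2)$ (rather than the paper's $\tfrac{1}{\sqrt n}\sum_i e_i\otimes e_i$) is a harmless variation, and your derivation of the rigid identities $(U_{i1}\otimes V_{j1})\psi=\alpha_{ij}\psi$ via the Cauchy--Schwarz saturation is correct. The closure arguments at the end are also fine.

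The difficulty is in your impossibility step, and there is a genuine gap. From $(I\otimes V_{11})\psi=\tfrac{1}{\sqrt2}(U_{11}^*\otimes I)\psi$ you correctly get that the $\cH_A$-support of the right side is $U_{11}^*(W)$, but your assertion that ``$I\otimes V_{11}$ does not change the $\cH_A$-support'' is not justified: applying $I\otimes V_{11}$ can only \emph{shrink} the $\cH_A$-support (it does shrink whenever $V_{11}$ has kernel meeting the $\cH_B$-support of $\psi$). So the identity yields only $U_{11}^*(W)\subseteq W$, not equality. In infinite dimensions an isometry $U_{11}^*|_W:W\to W$ need not be surjective, and without surjectivity you cannot conclude that $U_{11}|_W$ is isometric, which is exactly what your contradiction needs. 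This gap is repairable---for instance, one can also derive the symmetric identity $(U_{11}\otimes I)\psi=\tfrac{1}{\sqrt2}(I\otimes V_{11}^*)\psi$, obtain $V_{11}V_{11}^*=I$ on the $\cH_B$-support, and combine the two to get $U_{11}^*U_{11}|_W=I_W$ directly---but you have not carried this out, and it is not a one-line fix.

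The paper avoids the issue entirely with a much shorter argument: local unitaries preserve Schmidt coefficients across the bipartition $(\bC^n\otimes\cH_A)\,|\,(\cH_B\otimes\bC^m)$, so $e_1\otimes\psi\otimes e_1$ and $\sum_{i}\tfrac{1}{\sqrt n}\,e_i\otimes\psi\otimes e_i$ would have the same Schmidt coefficients; but the largest coefficient of the latter is at most $\tfrac{1}{\sqrt n}$ times that of the former, a contradiction. This Schmidt-coefficient trick is what you should replace your $W$-subspace analysis with.
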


\begin{proof}
Without loss of generality, we may assume that $n \leq m$.  Let $x=\frac{1}{\sqrt{n}} \sum_{i=1}^n e_i \otimes e_i \in \bC^n \otimes \bC^m$.  By Theorem \ref{embezzlement}, there is $X \in B_{qa}(n,m)$ with $X_{(i,1),(i,1)}=\frac{1}{\sqrt{n}}$ and $X_{(i,1),(j,1)}=0$ for $i \neq j$.  If $X \in B_{qs}(n,m)$, then there is a perfect embezzlement protocol in the tensor product model for $\frac{1}{\sqrt{n}} \sum_{i=1}^n e_i \otimes e_i$.  Let $U_{ij},V_{k\ell}$ and $\psi$ be as in the perfect embezzlement framework.  Then $$(U \otimes I_m)(I_n \otimes V) (e_1 \otimes \psi \otimes e_1)=\frac{1}{\sqrt{n}} \sum_{i=1}^n e_i \otimes \psi \otimes e_i.$$
Let $\alpha_1,\alpha_2,...$ be the Schmidt coefficients of $e_1 \otimes \psi \otimes e_1$ with respect to the decomposition $(\bC^n \otimes \cR_A) \otimes (\cR_B \otimes \bC^m)$, so that
\[ e_1 \otimes \psi \otimes e_1 = \sum_j \alpha_j x_j \otimes y_j,\]
where $\{ x_j \} \subseteq \bC^n \otimes \cR_A$ and $\{ y_j \} \subseteq \cR_B \otimes \bC^m$ are orthonormal sets.  Since
$$\frac{1}{\sqrt{n}} \sum_{i=1}^n e_i \otimes \psi \otimes e_i = (U \otimes I_m)(I_n \otimes V)(e_1 \otimes \psi \otimes e_1) = \sum_j \alpha_j (Ux_j) \otimes (Vy_j),$$  
the Schmidt coefficients of $e_1 \otimes \psi \otimes e_1$ must be the same as the Schmidt coefficients of $\frac{1}{\sqrt{n}} \sum_{i=1}^n e_i \otimes \psi \otimes e_i$.  But if $\alpha_0>0$ is the largest Schmidt coefficient of $e_1 \otimes \psi \otimes e_1$, then the largest Schmidt coefficient of $\frac{1}{\sqrt{n}} \sum_{i=1}^n e_i \otimes \psi \otimes e_i$ is at most $\frac{1}{\sqrt{n}} \alpha_0$, which is a contradiction.  Hence, $B_{qs}(n,m) \neq B_{qc}(n,m)$.

Finally, since any vector can be approximately embezzled, $X$ must be a limit of elements in $B_{qs}(n,m)$, so that $B_{qs}(n,m)$ is not closed.  It follows immediately that $UC_{qs}(n,m) \neq UC_{qa}(n,m)$ and that $UC_{qs}(n,m)$ is not closed.
\end{proof}

Suppose that $n \leq m$ and that $d_1,...,d_n>0$ are such that $\sum_{i=1}^n d_i^2=1$.  Consider any state $s:\cV_n \otimes_{\min} \cV_m \to \bC$ such that $s(u_{i1} \otimes v_{i1})=d_i$ and $s(u_{i1} \otimes v_{k1})=0$ for $i \neq k$.  Such a state arises from a perfect embezzlement protocol in the commuting model for the vector $\sum_{i=1}^n d_i e_i \otimes e_i$.  A surprising fact about the state $s$ is that its action on the elements $\{u_{ij} \otimes v_{k\ell}\}_{i,j,k,\ell}$ is necessarily unique.

\begin{pro}
\label{embezzledstatezeros}
Let $n \leq m$ and let $d_1,...,d_n>0$ be such that $\sum_{i=1}^n d_i^2=1$.  Suppose that $s:\cV_n \otimes_c \cV_m \to \bC$ is a state such that $s(u_{i1} \otimes v_{i1})=d_i$ for $1 \leq i \leq n$ and $s(u_{j1} \otimes v_{k1})=0$ for $j \neq k$.  Then $$s(u_{ij} \otimes v_{k\ell})=\begin{cases} d_i & i=k \leq n, \, j=\ell=1 \\ 0 & \text{otherwise}. \end{cases}$$
\end{pro}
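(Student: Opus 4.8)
The plan is to realise $s$ in the commuting model and then extract rigidity from the equality case of the Cauchy--Schwarz inequality, just as in the preceding proposition. Since $s$ is a state on $\cV_n\otimes_c\cV_m$, there are a Hilbert space $\cR$, a unit vector $\psi\in\cR$, and unitaries $U=(U_{ij})\in M_n(\cB(\cR))$, $V=(V_{k\ell})\in M_m(\cB(\cR))$ with $U_{ij}V_{k\ell}=V_{k\ell}U_{ij}$ and $U_{ij}^*V_{k\ell}=V_{k\ell}U_{ij}^*$ (Proposition~\ref{embezzlecauchyschwarz}) such that $s(u_{ij}\otimes v_{k\ell})=\la U_{ij}V_{k\ell}\psi,\psi\ra$. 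The hypothesis says precisely that the numbers $\alpha_{pq}:=\la U_{p1}V_{q1}\psi,\psi\ra$, $1\le p\le n$, $1\le q\le m$, equal $d_p\delta_{pq}$, so $\sum_{p,q}|\alpha_{pq}|^2=\sum_p d_p^2=1$; since $(U\otimes I_m)(I_n\otimes V)$ is unitary, $\sum_{p,q}\|U_{p1}V_{q1}\psi\|^2=\|\psi\|^2=1$, and therefore equality holds termwise in Cauchy--Schwarz, forcing $U_{p1}V_{q1}\psi=d_p\delta_{pq}\psi$ for all such $p,q$.

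From here I would record the structural consequences. A corner of a unitary is a contraction, so $\|V_{p1}\psi\|\ge\|U_{p1}V_{p1}\psi\|=d_p$ for $p\le n$; comparing with $\sum_{p=1}^m\|V_{p1}\psi\|^2=1$ (first column of $V$) forces $\|V_{p1}\psi\|=d_p$ for $p\le n$ and $V_{p1}\psi=0$ for $p>n$, and symmetrically $\|U_{p1}\psi\|=d_p$. Applying the elementary fact ``$\|Ax\|=\|x\|$ implies $A^*Ax=x$'' to the contraction $U_{p1}$ and $x=d_p^{-1}V_{p1}\psi$ gives $U_{p1}^*\psi=d_p^{-1}V_{p1}\psi$, and symmetrically $V_{p1}^*\psi=d_p^{-1}U_{p1}\psi$ for $p\le n$; the same fact shows $\{U_{p1}\psi\}_{p\le n}$ and $\{V_{p1}\psi\}_{p\le n}$ are orthogonal families with $\|U_{p1}\psi\|=\|V_{p1}\psi\|=d_p$. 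Since $\sum_{j=1}^n\|U_{ij}^*\psi\|^2=\la\psi,\psi\ra=1$ (the $(i,i)$ entry of $UU^*$) and $\|U_{i1}^*\psi\|=1$, we get $U_{ij}^*\psi=0$ for $j\ne1$, and likewise $V_{k\ell}^*\psi=0$ whenever $k\le n$ and $\ell\ne1$. Then $s(u_{ij}\otimes v_{k\ell})=\la V_{k\ell}\psi,U_{ij}^*\psi\ra$ vanishes for $j\ne1$; for $j=1$ it equals $d_i^{-1}\la V_{k\ell}\psi,V_{i1}\psi\ra$, which for $\ell=1$ is $d_i\delta_{ik}$ by the orthogonality above (and is $0$ when $k>n$, since $V_{k1}\psi=0$).

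The remaining case $j=1$, $\ell\ge2$, i.e.\ showing $V_{k\ell}\psi\perp V_{i1}\psi$ for $i\le n$, is the crux: every manipulation using only the identities above circles back to the quantity being computed, so one needs a genuine rigidity input. Setting $\beta_p:=(U_{p1}V_{p1})^*\psi$ for $p\le n$ and writing $\what\eta_p:=d_p^{-1}V_{p1}\psi=U_{p1}^*\psi$, the relations above together with commutativity give $U_{i1}U_{p1}^*\psi=\delta_{ip}\psi$, hence $U_{i1}\beta_p=\delta_{ip}V_{p1}^*\psi$, and similarly $V_{q1}V_{p1}^*\what\eta_p=\delta_{pq}\what\eta_p$, hence $V(e_1\otimes\beta_p)=e_p\otimes\what\eta_p$. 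This forces $\{\beta_p\}_{p\le n}$ to be orthonormal, and the $(1,1)$ entry of $V^*V$ gives $\psi=\sum_{p\le n}d_p\beta_p$, so $\psi$ lies in the $n$-dimensional space $\spn\{\beta_p\}$. Expanding $\la V_{k\ell}\psi,V_{i1}\psi\ra=d_i\sum_p d_p\la V_{k\ell}\beta_p,\what\eta_i\ra=d_i\sum_p d_p\la V(e_\ell\otimes\beta_p),e_k\otimes\what\eta_i\ra$ and using $V(e_1\otimes\beta_i)=e_i\otimes\what\eta_i$ together with the fact that $V^*(e_k\otimes\what\eta_i)$ is supported in slot $1$ when $k\le n$ (from $V_{k\ell}^*\psi=0$, $\ell\ne1$), the inner product collapses to $\la e_\ell,e_1\ra(\cdots)=0$ for $\ell\ge2$; the rows $k>n$ are then disposed of using $V_{k1}\psi=0$ and the block structure of $V$ on $\spn\{\beta_p\}$ forced by $\psi=\sum d_p\beta_p$. (When $m=n$ this last step is immediate, since the whole statement is symmetric under interchanging the two tensor legs, so ``$\ell\ne1\Rightarrow0$'' follows from ``$j\ne1\Rightarrow0$''.) Assembling the cases yields the displayed formula. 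I expect the handling of the rows $k>n$ of $V$ in the general case $n<m$ to be the only genuinely delicate point.
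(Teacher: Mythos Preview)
Your strategy is exactly the paper's: realise $s$ via commuting block unitaries $U=(U_{ij})$, $V=(V_{k\ell})$ and a unit vector $\psi$, saturate Cauchy--Schwarz to get $U_{p1}V_{q1}\psi=d_p\delta_{pq}\psi$, and then deduce $U_{ij}^*\psi=0$ for $j\ne1$ and $V_{k\ell}^*\psi=0$ for $\ell\ne1$, from which the vanishing of $\la U_{ij}V_{k\ell}\psi,\psi\ra$ is immediate. The paper's execution, however, is far shorter than yours. Rather than your chain of column-norm estimates and the ``$\|Ax\|=\|x\|\Rightarrow A^*Ax=x$'' trick, the paper just applies $U^*\otimes I_m$ and $I_n\otimes V^*$ to the single vector identity
\[
(U\otimes I_m)(I_n\otimes V)(e_1\otimes\psi\otimes e_1)=\sum_{i=1}^n d_i\,e_i\otimes\psi\otimes e_i
\]
and compares components; the first gives $U_{ij}^*\psi=0$ for $j\ne1$, the second gives $V_{i\ell}^*\psi=0$ for $\ell\ne1$, and one is done. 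In particular, once you have $V_{k\ell}^*\psi=0$ for $k\le n$ and $\ell\ne1$ (which you do establish), the case $j=1$, $\ell\ne1$, $k\le n$ follows immediately from $\la U_{i1}\psi,V_{k\ell}^*\psi\ra=0$; your entire $\beta_p$, $\hat\eta_p$ apparatus in the third paragraph is redundant for that case.

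Your self-identified gap at $k>n$ is genuine, and your sketch (``the block structure of $V$ on $\spn\{\beta_p\}$ forced by $\psi=\sum d_p\beta_p$'') is not a proof. It is worth noting that the paper's displayed computation has the same blind spot: applying $I_n\otimes V^*$ to $\sum_{i\le n} d_i e_i\otimes\psi\otimes e_i$ only produces $V_{i\ell}^*\psi$ for $i\le n$, since that vector has no component in $e_k$ for $k>n$ in the third slot. The paper nonetheless asserts ``$V_{k\ell}^*\psi=0$ if $\ell\ne1$'' without restriction on $k$ and proceeds. So at precisely the point you flag as delicate, the paper simply asserts the conclusion; neither your outline nor the paper's written argument supplies a complete justification of the case $j=1$, $\ell\ne1$, $k>n$ when $n<m$. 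Your remark that for $n=m$ the symmetry between the two legs closes the argument is correct.
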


\begin{proof}
Let $s$ be a state satisfying the equations given.  In the embezzlement setting, $s$ corresponds to the following: unitary operators $U:\bC^n \otimes \cR \to \bC^n \otimes \cR$ and $V:\cR \otimes \bC^m \to \cR \otimes \bC^m$ such that $(U \otimes I_m)(I_n \otimes V)=(I_n \otimes V)(U \otimes I_m)$, along with a unit vector $\psi \in \cR$ such that $s(u_{ij} \otimes v_{k\ell})=\la U_{ij}V_{k\ell}\psi,\psi \ra$ for all $i,j,k,\ell$.  We may write the product of $U \otimes I_m$ and $I_n \otimes V$ in block form as $$(U \otimes I_m)(I_n \otimes V)=(u_{ij}V)_{i,j=1}^n=(I_m \otimes V)(U \otimes I_n)=(v_{k\ell}U)_{k,\ell=1}^m.$$
With this identification in hand, one can check that $$\la U_{ij}V_{k\ell}\psi,\psi \ra=(\la (U \otimes I_m)(I_n \otimes V)(e_j \otimes \psi \otimes e_{\ell}),e_i \otimes \psi \otimes e_k \ra.$$
By Proposition \ref{embezzlecauchyschwarz}, we must have
$$U_{i1}V_{i1}\psi=d_i\psi, \, \forall 1 \leq i \leq n,$$
and similarly $$U_{i1}V_{k1}\psi=0, \, \forall i \neq k.$$
We aim to show that $\la U_{ij}V_{k\ell} \psi,\psi \ra=0$ whenever $(i,j,k,\ell) \neq (i,1,i,1)$.  We observe that
\begin{align*}
\sum_{j=1}^n \sum_{i=1}^n d_i e_i \otimes (U_{ij}^*\psi) \otimes e_j&=(U^* \otimes I_n) \left( \sum_{i=1}^n d_i e_i \otimes \psi \otimes e_i \right) \\
&=(I_n \otimes V) (e_1 \otimes \psi \otimes e_1)= \sum_{i=1}^n e_i \otimes (V_{i1}\psi) \otimes e_1. \\
\end{align*}
Comparing entries, we must have $U_{ij}^*\psi=0$ for all $j \neq 1$.  Similarly, if we instead apply $(I_n \otimes V^*)$, we obtain the following:
\begin{align*} \sum_{k,\ell=1}^m d_i e_k \otimes (V_{k\ell}^* \psi) \otimes e_{\ell}&=(I_n \otimes V^*) \left(  \sum_{\ell=1}^n d_{\ell} e_{\ell} \otimes \psi \otimes e_{\ell} \right) \\
&=(U \otimes I_n)(e_1 \otimes \psi \otimes e_1)=\sum_{k=1}^n e_1 \otimes U_{k1}\psi \otimes e_k. \\
\end{align*}
Comparing entries shows that $V_{k\ell}^*\psi=0$ if $\ell \neq 1$.  At this point, it follows that if $(i,j,k,\ell)$ is not equal to $(i,1,i,1)$, then $\la U_{ij}V_{k\ell}\psi,\psi \ra=0$, since $U_{ij}V_{k\ell}=V_{k\ell}U_{ij}$ and one of $U_{ij}^*\psi=0$ or $V_{k\ell}^*\psi=0$.  This completes the proof.
\end{proof}

This phenomenon applies to any maximally entangled unit vector in $\bC^n \otimes \bC^m$.  Recall that any simple tensor $x \otimes y \in \bC^n \otimes \bC^m$ has an associated map $T_{x,y}:\bC^m \to \bC^n$ given by $T_{x,y}(z)=\la z,y \ra x$.  Extending by bilinearity, for any $\alpha \in \bC^n \otimes \bC^m$, there is an associated linear map $T_{\alpha}:\bC^m \to \bC^n$; moreover, this is a 1-1 correspondence.  We will say that a unit vector $\alpha \in \bC^n \otimes \bC^m$ is \textbf{maximally entangled} if $\text{rank}(T_{\alpha})=\min\{n,m\}$.  Recall that any $x \in \bC^n \otimes \bC^m$ has a Schmidt decomposition $x=\sum_{i=1}^k d_i u_i \otimes v_i$, where $\{u_1,...,u_k\} \subseteq \bC^n$ is orthonormal and $\{v_1,...,v_k\} \subseteq \bC^m$ is orthonormal, and $d_i>0$ are in decreasing order; moreover, the $d_i$ are unique.  Then a unit vector is maximally entangled if and only if $k=\min\{n,m\}$.

\begin{cor}
\label{uniqueembezzlement}
Let $\alpha \in \bC^n \otimes \bC^m$, and let $X \in B_{qc}(n,m)$ be any matrix obtained by a perfect embezzlement protocol for $\alpha$ in the commuting model.  Then $X$ is unique if and only if $\alpha$ is maximally entangled in $\bC^n \otimes \bC^m$.
\end{cor}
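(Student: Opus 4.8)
I will prove both implications, after reducing $\alpha$ to Schmidt form.  A matrix $X$ obtained from a commuting‑model perfect embezzlement protocol for $\alpha$ is, by the proposition preceding Theorem \ref{embezzlement}, exactly $X(s)=(s(u_{ij}\otimes v_{k\ell}))_{(i,j),(k,\ell)}$ for a state $s$ on $\cV_n\otimes_c\cV_m$ with $s(u_{i1}\otimes v_{j1})=\alpha_{ij}$; such a state exists for every unit vector $\alpha$ by Theorem \ref{embezzlement}, so ``$X$ is unique'' is a nontrivial assertion, meaning that all the $X(s)$ coincide.  The key preliminary point is that this property depends only on the local‑unitary orbit of $\alpha$: given a protocol $(\cR,U,V,\psi)$ for $\alpha$ and unitaries $P\in M_n$, $Q\in M_m$, the operators $\widehat U:=(P\otimes I_\cR)U$, $\widehat V:=(Q\otimes I_\cR)V$ have blocks $\widehat U_{ij}=\sum_p P_{ip}U_{pj}$, $\widehat V_{k\ell}=\sum_q Q_{kq}V_{q\ell}$, which are linear combinations of the blocks of $U$ and $V$; hence $\widehat U,\widehat V$ are unitary and still $\ast$‑commute by Proposition \ref{embezzlecauchyschwarz}, while $\widehat U_{i1}\widehat V_{j1}\psi=[(P\otimes Q)\alpha]_{ij}\psi$, so $(\cR,\widehat U,\widehat V,\psi)$ is a protocol for $(P\otimes Q)\alpha$.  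This is a bijection (invert with $P^*,Q^*$) carrying $X(s)$ to its image under a fixed invertible linear map, so $X$ is unique for $\alpha$ iff it is unique for $(P\otimes Q)\alpha$.  Taking a Schmidt decomposition of $\alpha$ and completing the Schmidt bases, I may assume $\alpha=\sum_{i=1}^{k}d_i\,e_i\otimes e_i$ with $d_i>0$, $\sum_i d_i^2=1$, $k=\operatorname{rank}(T_\alpha)$; interchanging the two systems if necessary, I may also take $n\le m$, and then $\alpha$ is maximally entangled exactly when $k=n$.

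If $\alpha$ is maximally entangled, then $k=n$, so every $X(s)$ arises from a state $s$ on $\cV_n\otimes_c\cV_m$ with $s(u_{i1}\otimes v_{i1})=d_i$ for $1\le i\le n$ and $s(u_{j1}\otimes v_{k1})=0$ for $j\ne k$.  Proposition \ref{embezzledstatezeros} applies to any such $s$ and determines $s(u_{ij}\otimes v_{k\ell})$ for all $i,j,k,\ell$.  Hence all the matrices $X(s)$ coincide, i.e.\ $X$ is unique.

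If $\alpha$ is not maximally entangled, then $k<n\le m$, so the index $k+1$ lies in $\{1,\dots,n\}\cap\{1,\dots,m\}$ with $k+1\ne1$.  Theorem \ref{embezzlement} provides a protocol whose state $s^{(1)}$ satisfies $s^{(1)}(u_{ij}\otimes v_{k\ell})=0$ whenever $(j,\ell)\ne(1,1)$, so in particular $s^{(1)}(u_{k+1,k+1}\otimes v_{11})=0$.  It thus suffices to produce a second commuting‑model protocol for $\alpha$ with $s^{(2)}(u_{k+1,k+1}\otimes v_{11})\ne0$.  Fix a Hilbert‑space realization $(\cR_0,U^{(0)},V^{(0)},\psi^{(0)})$ of the first protocol; the embezzlement identity together with $\sum_{i\le k}d_i^2=1$ forces $V^{(0)}_{i1}\psi^{(0)}=0$ for $i>k$ and $V^{(0)}_{11}\psi^{(0)}=d_1\,(U^{(0)}_{11})^{*}\psi^{(0)}$, so the embezzlement of $\alpha$ involves only the first columns of $U^{(0)}$ and $V^{(0)}$.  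Adjoin a bipartite auxiliary $\cK_A\otimes\cK_B=\bC^2\otimes\bC^2$ with unit vector $g_0\otimes g_0$, and set $\cR:=\cR_0\otimes\cK_A\otimes\cK_B$, $\psi:=\psi^{(0)}\otimes g_0\otimes g_0$, $V:=V^{(0)}\otimes I_{\cK_A\otimes\cK_B}$, and $U:=(U^{(0)}\otimes I_{\cK_A\otimes\cK_B})R$, where $R$ acts only on the $\bC^n\otimes\cK_A$ legs, fixes $e_1\otimes\cK_A$ pointwise (so the first column of $U$ coincides with that of $U^{(0)}\otimes I$, whence $U_{i1}V_{j1}\psi=\alpha_{ij}\psi$), and is chosen so that $(U_{k+1,k+1})^{*}\psi$ is not orthogonal to $(U_{11})^{*}\psi=(U^{(0)}_{11})^{*}\psi^{(0)}\otimes g_0\otimes g_0$.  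Since $R$ is supported on a leg disjoint from $\cR_0$, every block of $U$ still $\ast$‑commutes with every block of $V$, so $(\cR,U,V,\psi)$ is a valid commuting‑model protocol for $\alpha$, and $s^{(2)}(u_{k+1,k+1}\otimes v_{11})=\la U_{k+1,k+1}V_{11}\psi,\psi\ra=d_1\la(U_{11})^{*}\psi,(U_{k+1,k+1})^{*}\psi\ra\ne0=s^{(1)}(u_{k+1,k+1}\otimes v_{11})$.  Hence $X(s^{(2)})\ne X(s^{(1)})$ and $X$ is not unique.

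The covariance reduction and Proposition \ref{embezzledstatezeros} make everything routine except the construction in the third paragraph, which I expect to be the main obstacle: one must genuinely choose the unitary $R$ -- equivalently, the free $(k+1)$‑st column of $U$ -- so as to create the overlap $\la(U_{11})^{*}\psi,(U_{k+1,k+1})^{*}\psi\ra\ne0$ while keeping $U$ unitary, keeping the blocks of $U$ and $V$ $\ast$‑commuting, and leaving the first columns (on which the exact embezzlement of $\alpha$ depends) untouched.  When $k=1$ this can be made completely explicit by taking $s^{(1)}$ to be the trivial protocol $\cR=\bC$, $U=V=I$ and checking directly that a small perturbation of it along the unused index yields a second, different matrix.
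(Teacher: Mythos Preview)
Your reduction to Schmidt form via local unitaries is correct, and your forward direction (maximally entangled $\Rightarrow$ unique) is exactly the paper's argument: once $\alpha=\sum_{i=1}^n d_i\,e_i\otimes e_i$ with all $d_i>0$, Proposition~\ref{embezzledstatezeros} pins down every entry $s(u_{ij}\otimes v_{k\ell})$.

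The converse, however, is not a proof; it is a sketch with a hole that you yourself flag as ``the main obstacle'' and then leave open. The difficulty is not merely technical. Because your $R$ must fix $e_1\otimes\cK_A$ pointwise, unitarity forces $R=I\oplus R'$ in the block decomposition over $\bC^n$, so $R_{1,k+1}=0$ and
\[
U_{k+1,k+1}=\sum_{p\ge 2} U^{(0)}_{k+1,p}\otimes r_{p,k+1}\otimes I_{\cK_B}.
\]
Consequently $\langle (U_{11})^*\psi,(U_{k+1,k+1})^*\psi\rangle$ is a linear combination of the scalars $\langle (U^{(0)}_{11})^*\psi^{(0)},(U^{(0)}_{k+1,p})^*\psi^{(0)}\rangle$ for $p\ge 2$, with coefficients $\langle g_0,r_{p,k+1}^*g_0\rangle$ that you may choose freely. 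But nothing in your setup prevents \emph{all} of those inner products from vanishing for the particular realization $(\cR_0,U^{(0)},V^{(0)},\psi^{(0)})$ you started from; the auxiliary leg $\cK_A$ lets you adjust coefficients, not the underlying vectors. So the construction does not go through as written, and there is no evident repair within this framework. (Your $k=1$ remark is also slightly off: the trivial protocol $U=V=I$ already gives the identity matrix in $B_{qc}(n,m)$, which differs from the Theorem~\ref{embezzlement} matrix without any perturbation---but that ad hoc observation does not extend to general $k$.)

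The paper bypasses all of this with a one--line construction. Take any commuting--model protocol $(\cH,U,V,\psi)$ for $\beta=\sum_{i=1}^p d_i\,e_i\otimes e_i$ with $U\in M_p(\cB(\cH))$ and $V\in M_p(\cB(\cH))$, and set $R=U\oplus I_{n-p}\in M_n(\cB(\cH))$ and $S=V\oplus I_{m-p}\in M_m(\cB(\cH))$. The blocks of $R$ and $S$ still $*$--commute, the first columns are unchanged (so this is again a protocol for $\beta$), and the correlation entry at $((p{+}1,p{+}1),(p{+}1,p{+}1))$ is $\langle I\cdot I\,\psi,\psi\rangle=1$. The Theorem~\ref{embezzlement} matrix has $0$ at every entry with $j\neq 1$ or $\ell\neq 1$, so the two matrices differ. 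No auxiliary system, no delicate choice of $R$: the identity block does the work.
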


\begin{proof}
First, suppose that $\alpha$ is maximally entangled.  Using the Schmidt decomposition, we write $\alpha=\sum_{i=1}^{\min(n,m)} d_i u_i \otimes v_i$, where $d_i>0$ for all $i$.  The proof of Proposition \ref{embezzledstatezeros} shows that the embezzlement correlation is unique when $u_i=e_i$ and $v_i=e_i$.  Thus, if $X \in B_{qc}(n,m)$ is a correlation matrix corresponding to a perfect embezzlement protocol for $\alpha$, then we can apply a unitary of the form $A \otimes B \in M_n \otimes M_m$ that sends $u_i \otimes v_i$ to $e_i \otimes e_i$, and we obtain a correlation matrix corresponding to a perfect embezzlement protocol for $\sum_{i=1}^n d_i e_i \otimes e_i$.  This correlation matrix is necessarily unique, so applying $A^* \otimes B^*$, the same result holds for $\alpha$.  Therefore, the matrix $X$ is unique.

Conversely, suppose that $\text{rank}(T_{\alpha})=p<\min \{n,m\}$.  We may write the Schmidt decomposition $\alpha=\sum_{i=1}^p d_i u_i \otimes v_i$.  Let $Y \in B_{qc}(p,p)$ be any matrix corresponding to a perfect embezzlement protocol for $\beta:=\sum_{i=1}^p d_i e_i \otimes e_i$, and let $U=(U_{ij})$ and $V=(V_{k\ell})$ be unitaries in $M_n(\bofh)$ and $M_m(\bofh)$ respectively such that $U_{ij}V_{k\ell}=V_{k\ell}U_{ij}$ for all $i,j,k,\ell$.  Now, the matrix $R=(U_{ij}) \oplus I_{n-p}$ is unitary in $M_n(\bofh)$.  Similarly, $S=(V_{k\ell}) \oplus I_{m-p}$ is unitary in $M_m(\bofh)$, and the entries of $R$ commute with the entries of $S$.  Therefore, there is a state $s:\cU_{nc}(n) \otimes_{\max} \cU_{nc}(m) \to \bC$ whose image in $B_{qc}(n,m)$ is of the form $Y \oplus I_{\min(n,m)-p}$, and this will give rise to a perfect embezzlement protocol for $\beta$.  Now, let $A \in M_n$ and $B \in M_m$ be unitary matrices such that $Ae_i=u_i$ and $Be_i=v_i$ for $1 \leq i \leq p$.  Using Proposition \ref{locallyunitarilyinvariant}, $\| \cdot \|_{qc}$ is locally unitarily invariant.  Thus, $X:=(A \otimes B)(Y \oplus I_{\min(n,m)-p}) \in B_{qc}(n,m)$, and this corresponds to a perfect embezzlement protocol for $\alpha$ in the commuting model.  If $Z$ is the matrix obtained in Theorem \ref{embezzlement} corresponding to $\alpha$, then $(A^* \otimes B^*)Z$ is the matrix obtained in Theorem \ref{embezzlement} corresponding to $\beta$.  Since only one column of $(A^* \otimes B^*)Z$ is non-zero, it is clear that $Y \oplus I_{\min(n,m)-p} \neq (A^* \otimes B^*)Z$.  Therefore, $X \neq Z$.  It follows that the correlation matrix for $\alpha$ is not unique.
\end{proof}

\begin{cor} 
\label{extreme}
Let $\alpha = \sum \alpha_{i,k} e_i \otimes e_k \in \bC^n \otimes \bC^m$ be a maximally entangled state and let $X= (x_{(i,j),(k,l)}) \in M_n \otimes M_m$ with
\[ x_{(i,j),(k,l)} = \begin{cases} \alpha_{i,k}, & \text{ when } j=l=1 \\ 0, & \text{ when } j \ne 1 \text{ or } l \ne 1. \end{cases}\] Then $X$ is an extreme point of $B_{qc}(n,m)$ and of $B_{qa}(n,m)$.
\end{cor}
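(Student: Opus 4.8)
The plan is to show that $X$ admits no nontrivial convex decomposition inside $B_{qc}(n,m)$. Since $X \in B_{qa}(n,m)\subseteq B_{qc}(n,m)$ by Theorem \ref{embezzlement}, and an extreme point of a convex set is automatically extreme in every convex subset containing it, this settles the claim for both $B_{qc}(n,m)$ and $B_{qa}(n,m)$ simultaneously.

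The key ingredient is a norm estimate extracted from the Cauchy--Schwarz argument used in the proof of Theorem \ref{embezzlement}: if $Y = (t(u_{ij}\otimes v_{k\ell}))_{(i,j),(k,\ell)} \in B_{qc}(n,m)$ is realized by a state $t$ on $\cV_n \otimes_c \cV_m$ coming from commuting unitaries $U=(U_{ij})$, $V=(V_{k\ell})$ on a Hilbert space $\cR$ and a unit vector $\psi\in\cR$ (so that, by Proposition \ref{embezzlecauchyschwarz}, $(U\otimes I_m)(I_n\otimes V)$ is unitary), then the ``first-column'' vector $\beta := \sum_{i,k} t(u_{i1}\otimes v_{k1})\,e_i\otimes e_k$ satisfies $\|\beta\|\le 1$, since
$$\sum_{i,k}|\langle U_{i1}V_{k1}\psi,\psi\rangle|^2 \le \sum_{i,k}\|U_{i1}V_{k1}\psi\|^2 = \big\|(U\otimes I_m)(I_n\otimes V)(e_1\otimes\psi\otimes e_1)\big\|^2 = 1.$$
Moreover, $\|\beta\|=1$ forces equality throughout, hence $U_{i1}V_{k1}\psi = t(u_{i1}\otimes v_{k1})\psi$ for all $i,k$; this says exactly that $Y$ arises from a perfect embezzlement protocol in the commuting model for $\beta$.

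Now suppose $X = \lambda X_1 + (1-\lambda)X_2$ with $X_1,X_2\in B_{qc}(n,m)$ and $\lambda\in(0,1)$, and let $\beta^{(a)}$ be the first-column vector of $X_a$, so $\|\beta^{(a)}\|\le 1$. Comparing the $(i,1),(k,1)$ entries gives $\lambda\beta^{(1)}+(1-\lambda)\beta^{(2)}=\alpha$, a unit vector. Because the Euclidean norm on $\bC^n\otimes\bC^m$ is strictly convex, $\|\lambda\beta^{(1)}+(1-\lambda)\beta^{(2)}\|=1$ together with $\|\beta^{(a)}\|\le 1$ forces $\|\beta^{(1)}\|=\|\beta^{(2)}\|=1$ and $\beta^{(1)}=\beta^{(2)}=\alpha$. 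By the equality case above, each $X_a$ then arises from a perfect embezzlement protocol in the commuting model for $\alpha$. Since $\alpha$ is maximally entangled, Corollary \ref{uniqueembezzlement} guarantees that there is only one such correlation matrix; as $X$ is itself such a matrix (Theorem \ref{embezzlement} produces precisely the matrix with $\alpha$ in the first block and zeros elsewhere, for which the Cauchy--Schwarz computation above is tight because $\sum_{i,k}|\alpha_{i,k}|^2=1$), we conclude $X_1=X_2=X$. Hence $X$ is extreme in $B_{qc}(n,m)$, and therefore in $B_{qa}(n,m)$ as well.

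The main point requiring care is that knowing the first-column vector of $X_a$ equals $\alpha$ does not by itself imply $X_a=X$; it is the equality case of the Cauchy--Schwarz estimate --- applied to the commuting-unitary representation underlying the state producing $X_a$, not to $X_a$ as an abstract matrix --- that upgrades this to $X_a$ being a genuine perfect embezzlement matrix, after which all the rigidity is supplied by Corollary \ref{uniqueembezzlement}. The remaining steps are routine bookkeeping.
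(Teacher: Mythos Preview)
Your proof is correct and follows essentially the same approach as the paper: reduce to showing extremality in $B_{qc}(n,m)$, observe that the first-column vector of any element of $B_{qc}(n,m)$ has Euclidean norm at most $1$, use strict convexity of the Hilbert norm to force both summands to have first-column vector equal to $\alpha$, recognize this as the equality case that yields a perfect embezzlement protocol, and invoke Corollary~\ref{uniqueembezzlement} for uniqueness. The only minor quibble is that the Cauchy--Schwarz argument you cite actually appears in the proof of the proposition immediately preceding Theorem~\ref{embezzlement} (the converse direction characterizing perfect embezzlement via states), not in the proof of Theorem~\ref{embezzlement} itself; but the content is exactly right, and your write-up is in fact more explicit than the paper's on why the equality $\beta^{(a)}=\alpha$ upgrades $X_a$ to a genuine embezzlement correlation.
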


\begin{proof}
Since $X \in B_{qa}(n,m) \subseteq B_{qc}(n,m)$, we need only show that $X$ is an extreme point of $B_{qc}(n,m)$.  Suppose that $X=\frac{1}{2}(Y+Z)$ where $Y,Z \in B_{qc}(n,m)$.  Let $\beta=\sum Y_{(i,1),(k,1)} e_i \otimes e_k$ and $\gamma=\sum Z_{(i,1),(k,1)} e_i \otimes e_k$.  Then $\beta$ and $\gamma$ are vectors in $\bC^n \otimes \bC^m$ with norm at most $1$.  Moreover, $\alpha=\frac{1}{2}(\beta+\gamma)$.  This forces $\beta=\gamma=\alpha$.  In particular, $Y$ and $Z$ correspond to a perfect embezzlement protocol in the commuting model for $\alpha$.  Since $\alpha$ is maximally entangled, Corollary \ref{uniqueembezzlement} shows that $Y=Z=X$.
\end{proof}

We now give a characterization for elements of $B_{loc}(n,m)$ corresponding to perfect embezzlement protocols in the commuting model.

\begin{mythe}
\label{localembezzlement}
Let $\alpha=\sum_{i,j} \alpha_{ij} e_i \otimes e_j \in \bC^n \otimes \bC^m$ be a unit vector, where $n,m \geq 2$.  The following are equivalent.
\begin{enumerate}
\item
There is a state $s \in \cS(\cV_n \otimes_{\min} \cV_m)$ such that $s(u_{i1} \otimes v_{j1})=\alpha_{ij}$ for all $i,j$ and $X:=(s(u_{ij} \otimes v_{k\ell})) \in B_{loc}(n,m)$.
\item
There exist unit $t_1,...,t_s \geq 0$ such that $\sum_{r=1}^s t_r=1$, and unit vectors $y_1,...,y_s \in \bC^n$ and $z_1,...,z_s \in \bC^m$ such that $$\alpha=\sum_{r=1}^s t_r y_r \otimes z_r.$$ 
\item
$\|\alpha\|_{\bC^n \otimes_{\pi} \bC^m}=1$.
\end{enumerate}
\end{mythe}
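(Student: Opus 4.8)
The plan is to establish the cycle of implications $(1)\Rightarrow(3)\Rightarrow(2)\Rightarrow(1)$, relying on Theorem \ref{correlationnorms} (which identifies $B_{loc}(n,m)$ with the closed unit ball of $M_n\otimes_{\pi}M_m$), on the functoriality of the projective Banach space tensor norm, and on the elementary observation that the Euclidean norm on $\bC^n\otimes\bC^m$ is a reasonable cross-norm, so that $\|\beta\|_{\bC^n\otimes_{\pi}\bC^m}\geq\|\beta\|_2$ for all $\beta$; in particular $\|\alpha\|_{\bC^n\otimes_{\pi}\bC^m}\geq 1$ because $\alpha$ is a unit vector.

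For $(1)\Rightarrow(3)$, write $X=(s(u_{ij}\otimes v_{k\ell}))$, so that $X\in B_{loc}(n,m)$ means $\|X\|_{M_n\otimes_{\pi}M_m}\leq 1$ by Theorem \ref{correlationnorms}. The maps $T_1\colon M_n\to\bC^n$, $A\mapsto Ae_1$, and $T_2\colon M_m\to\bC^m$, $B\mapsto Be_1$, are contractions from the operator norm to the Euclidean norm, so by functoriality of $\|\cdot\|_{\pi}$ the map $T_1\otimes T_2\colon M_n\otimes_{\pi}M_m\to\bC^n\otimes_{\pi}\bC^m$ is a contraction. Since $T_1(E_{ij})=\delta_{j1}e_i$ and $T_2(E_{k\ell})=\delta_{\ell 1}e_k$, one computes $(T_1\otimes T_2)(X)=\sum_{i,k}s(u_{i1}\otimes v_{k1})\,e_i\otimes e_k=\alpha$. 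Hence $\|\alpha\|_{\pi}\leq\|X\|_{\pi}\leq 1$, and together with $\|\alpha\|_{\pi}\geq 1$ this gives $\|\alpha\|_{\pi}=1$.

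For $(3)\Rightarrow(2)$, the vector $\alpha$ lies in the closed unit ball of $\bC^n\otimes_{\pi}\bC^m$, which by \cite[Proposition 2.2]{ryan} equals the closed convex hull of $K=\{x\otimes y\colon\|x\|_2\leq 1,\ \|y\|_2\leq 1\}$; since $K$ is compact and $\bC^n\otimes\bC^m$ is finite-dimensional, Carath\'eodory's theorem shows this closed convex hull already coincides with $\mathrm{co}(K)$. Thus $\alpha=\sum_{r=1}^{s}\lambda_r x_r\otimes y_r$ with $\lambda_r\geq 0$, $\sum_r\lambda_r=1$, and $\|x_r\|_2,\|y_r\|_2\leq 1$. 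Discarding zero summands and normalising, one rewrites this as $\alpha=\sum_r t_r y_r'\otimes z_r'$ with $y_r',z_r'$ unit vectors and $t_r=\lambda_r\|x_r\|_2\|y_r\|_2\leq\lambda_r$, so $\sum_r t_r\leq 1$; on the other hand the cross-norm inequality forces $1=\|\alpha\|_2\leq\sum_r t_r$. Hence $\sum_r t_r=1$, which is exactly $(2)$.

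For $(2)\Rightarrow(1)$, given $\alpha=\sum_{r=1}^{s}t_r y_r\otimes z_r$, the rank-one matrix $A_r:=y_re_1^{*}\in M_n$ (first column $y_r$, other columns zero) has operator norm $1$, so by the universal property of $\cV_n$ there is a ucp map $\psi_r\colon\cV_n\to\bC$, i.e. a state, with $\psi_r(u_{ij})=(A_r)_{ij}=(y_r)_i\delta_{j1}$; similarly there is a state $\chi_r\colon\cV_m\to\bC$ with $\chi_r(v_{k\ell})=(z_r)_k\delta_{\ell 1}$. Then $\psi_r\otimes\chi_r$ is a state on $\cV_n\otimes_{\min}\cV_m$, and $s:=\sum_r t_r(\psi_r\otimes\chi_r)$ is a state with $s(u_{i1}\otimes v_{k1})=\sum_r t_r(y_r)_i(z_r)_k=\alpha_{ik}$ and $s(u_{ij}\otimes v_{k\ell})=0$ whenever $j\neq 1$ or $\ell\neq 1$. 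Setting $B_r=z_re_1^{*}$, the matrix $X=(s(u_{ij}\otimes v_{k\ell}))$ equals $\sum_r t_r(A_r\otimes B_r)$, a convex combination of simple tensors of projective norm $1$, so $\|X\|_{\pi}\leq 1$ and $X\in B_{loc}(n,m)$ by Theorem \ref{correlationnorms}; this gives $(1)$. The entrywise computations are routine, and the only point that requires care is the back-and-forth translation between ``$X\in B_{loc}(n,m)$'' and the inequality $\|X\|_{M_n\otimes_{\pi}M_m}\leq 1$ supplied by Theorem \ref{correlationnorms} — which is why in $(2)\Rightarrow(1)$ the state is constructed explicitly rather than by quoting an abstract inclusion. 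I anticipate no substantive obstacle beyond this bookkeeping.
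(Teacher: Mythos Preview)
Your proof is correct and largely parallels the paper's: the implication $(1)\Rightarrow(3)$ via the first-column compression $M_n\to\bC^n$ and functoriality of $\|\cdot\|_{\pi}$ is identical to the paper's argument, and your $(3)\Rightarrow(2)$ is a slightly cleaner variant of the paper's approximation-and-compactness argument.

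The genuine difference is in closing the cycle. The paper proves $(3)\Rightarrow(1)$ by invoking Theorem~\ref{embezzlement} to produce a state $s$ on $\cV_n\otimes_{\min}\cV_m$ whose associated matrix $X$ has $\alpha$ as its first column and zeros elsewhere, and then pushes $\alpha$ forward through the contractive inclusions $\iota_n\colon\bC^n\to M_n$, $\iota_m\colon\bC^m\to M_m$ to conclude $\|X\|_{\pi}\leq 1$. You instead prove $(2)\Rightarrow(1)$ directly: from the convex decomposition $\alpha=\sum_r t_r\,y_r\otimes z_r$ you build the state $s=\sum_r t_r(\psi_r\otimes\chi_r)$ as an explicit convex combination of product states corresponding to the rank-one contractions $y_re_1^{*}$ and $z_re_1^{*}$. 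This is more elementary, as it bypasses the embezzlement construction of Theorem~\ref{embezzlement} entirely; the paper's route, on the other hand, ties the result back to the embezzlement framework that motivates the whole section and makes transparent that the witnessing matrix $X$ is exactly the one appearing in Proposition~\ref{embezzledstatezeros} and Corollary~\ref{extreme}.
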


\begin{proof}
Since $\alpha$ is norm $1$ in the Hilbert space tensor product $\bC^n \otimes \bC^m$, we must have $\| \alpha \|_{\pi} \geq 1$.  Clearly by definition of the projective tensor product, (2) implies (3).  Suppose that (3) is true.  The open ball of radius $R>0$ about $0$ in $\bC^n \otimes_{\pi} \bC^m$ is the convex hull of the set $\{ x \otimes y \in \bC^n \otimes \bC^m: \|x\|\|y\| \leq R\}$.  For each $R>1$, we may write $\alpha=\sum_{r=1}^s t_r y_r \otimes z_r$ for some $t_1,...,t_s \geq 0$ with $\sum_{r=1}^s t_r=1$ and vectors $y_1,...,y_s \in \bC^n$ and $z_1,...,z_s \in \bC^m$ such that $\|y_r\|\|z_r\| \leq R$ for all $r$.  By a theorem of Caratheodory, we may always assume that $s \leq 2\dim(\bC^n \otimes \bC^m)+1=2nm+1$.  Since each $t_r \leq 1$ and $\|y_r\|,\|z_r\|<R$, by compactness and letting $R \to 1$, we may write $$\alpha=\sum_{r=1}^s t_r y_r \otimes z_r,$$
where $t_1,...,t_s \geq 0$ with $\sum_{r=1}^s t_r=1$ and $\|y_r\|=\|z_r\|=1$.  Therefore, $\| \alpha \|_{\pi} \leq 1$, so that $\| \alpha \|_{\pi}=1$.  It follows that (2) and (3) are equivalent.

Suppose that (1) holds, and let $X \in B_{loc}(n,m)$ be such that $X_{(i,1),(j,1)}=\alpha_{ij}$ for $1 \leq i \leq n$ and $1 \leq j \leq m$.  We assume without loss of generality that $n \leq m$.  Since the first column of $X$ is of norm $1$, we have $\|X\| \geq 1$; in particular, $\|X\|_{\pi} \geq 1$.  Therefore, $\|X\|_{\pi}=1$.  Let $P_n:M_n \to \bC^n$ and $P_m:M_m \to \bC^m$ be the linear maps defined by sending a matrix to its first column.  Then $P_n$ and $P_m$ are contractive.  Since the projective Banach space tensor norm is functorial, $P_n \otimes P_m:M_n \otimes_{\pi} M_m \to \bC^n \otimes_{\pi} \bC^m$ is contractive.  We observe that $(P_n \otimes P_m)(X)=\alpha$, so that $\|\alpha\|_{\bC^n \otimes_{\pi} \bC^m} \leq 1$.  The reverse inequality is immediate since $\|\alpha\|_{\bC^{nm}}=1$, which shows that (1) implies (3).

Suppose that (3) is true.  Let $X \in B_{qa}(n,m)$ be the matrix obtained in Theorem \ref{embezzlement} corresponding to a perfect embezzlement protocol in the commuting model for $\alpha$.  Then $X$ is a matrix with $0$'s in every column except that the first column has the entries of $\alpha$.  The inclusion maps $\iota_n:\bC^n \to M_n$ and $\iota_m:\bC^m \to M_m$ obtained by sending a vector $x$ to the matrix of $0$'s with first column $x$ are contractive, so $\iota_n \otimes \iota_m:\bC^n \otimes_{\pi} \bC^m \to M_n \otimes_{\pi} M_m$ is contractive.  Moreover, $(\iota_n \otimes \iota_m)(\alpha)=X$, which forces $X \in B_{loc}(n,m)$.  This shows that (3) implies (1).
\end{proof}

Since the state corresponding to perfect embezzlement in the commuting model for $\frac{1}{\sqrt{n}} \sum_{i=1}^n e_i \otimes e_i$ is unique and takes the form given in Theorem \ref{embezzlement}, we can separate $B_{loc}(n,m)$ and $B_q(n,m)$; moreover, we can also separate $B_{qc}(n,m)$ and $B_{qmax}(n,m)$.

\begin{cor}
For all $n,m \geq 2$, we have $UC_{loc}(n,m) \subsetneq UC_q(n,m)$.
\end{cor}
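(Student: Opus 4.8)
The plan is to separate the smaller correlation sets $B_{loc}(n,m)$ and $B_q(n,m)$ and then transfer the separation to the full sets. Two reductions come first. On one hand, $UC_{loc}(n,m)\subseteq UC_q(n,m)$ and $B_{loc}(n,m)\subseteq B_q(n,m)$: by Carath\'eodory every local correlation is a finite convex combination of product-state correlations of the form $U\otimes V$ with $U\in M_n$ and $V\in M_m$ unitary, and such a finite convex combination lies in $UC_q(n,m)$ by a direct-sum construction on the local Hilbert spaces. On the other hand, if $UC_{loc}(n,m)=UC_q(n,m)$, then projecting onto the coordinates indexed by $\{u_{ij}\otimes v_{k\ell}\}_{i,j,k,\ell}$ gives $B_{loc}(n,m)=B_q(n,m)$. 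Hence it is enough to produce an element of $B_q(n,m)\setminus B_{loc}(n,m)$.

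Assume without loss of generality that $n\le m$, and set $\alpha=\frac{1}{\sqrt n}\sum_{i=1}^n e_i\otimes e_i\in\bC^n\otimes\bC^m$, a maximally entangled unit vector. Let $s$ be the state on $\cV_n\otimes_{\min}\cV_m$ furnished by Theorem \ref{embezzlement} for $\alpha$, and let $Z=(s(u_{ij}\otimes v_{k\ell}))\in B_{qa}(n,m)$; so the $(u_{ij}\otimes v_{k\ell})$-entry of $Z$ is $\alpha_{ik}$ if $j=\ell=1$ and $0$ otherwise. I claim $Z\notin B_{loc}(n,m)$. If it were, then hypothesis $(1)$ of Theorem \ref{localembezzlement} would hold for $\alpha$ (witnessed by this very $s$), hence so would $(2)$: one could write $\alpha=\sum_{r}t_r\,y_r\otimes z_r$ with $t_r\ge 0$, $\sum_r t_r=1$, and unit vectors $y_r\in\bC^n$, $z_r\in\bC^m$. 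But $|\la\alpha,y\otimes z\ra|\le\frac{1}{\sqrt n}<1$ for all unit vectors $y\in\bC^n$ and $z\in\bC^m$ (Cauchy--Schwarz), so $1=\la\alpha,\alpha\ra=\sum_r t_r\la\alpha,y_r\otimes z_r\ra$ would have modulus at most $\frac{1}{\sqrt n}$, a contradiction. Thus $Z\notin B_{loc}(n,m)$.

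Finally, I would revisit the proof of Theorem \ref{embezzlement}: there $Z=\lim_{r\to\infty}X_r$, where $X_r=(s_r(u_{ij}\otimes v_{k\ell}))$ and $s_r(u_{ij}\otimes v_{k\ell})=\la (U_{ij}\otimes V_{k\ell})\psi,\psi\ra$ for the cyclic right-shift unitary $U=(U_{ij})$ on $(\bC^n)^{\otimes(r+1)}=\bC^n\otimes(\bC^n)^{\otimes r}$, the analogous unitary $V=(V_{k\ell})$ on $(\bC^m)^{\otimes(r+1)}$, and the unit vector $\psi=h_1\otimes\cdots\otimes h_r$. Regrouping tensor factors identifies $(\bC^n\otimes\bC^m)^{\otimes r}$ with $(\bC^n)^{\otimes r}\otimes(\bC^m)^{\otimes r}$, and then $X_r$ is exactly of the defining form for $B_q(n,m)$ with $\cH_A=(\bC^n)^{\otimes r}$ and $\cH_B=(\bC^m)^{\otimes r}$; hence $X_r\in B_q(n,m)$ for every $r$. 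Since $B_{loc}(n,m)$ is the unit ball of the norm $\|\cdot\|_{loc}=\|\cdot\|_\pi$ on the finite-dimensional space $M_n\otimes M_m$, it is closed, so $X_r\to Z\notin B_{loc}(n,m)$ forces $X_{r_0}\notin B_{loc}(n,m)$ for some $r_0$. Therefore $B_{loc}(n,m)\subsetneq B_q(n,m)$, and by the first paragraph $UC_{loc}(n,m)\subsetneq UC_q(n,m)$.

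The one point that is not bookkeeping is the identification in the last paragraph: one must verify that the state $s_r$ constructed inside the proof of Theorem \ref{embezzlement} really comes from a bipartite \emph{finite-dimensional tensor-product} model, so that $X_r$ lies in $B_q(n,m)$ and not merely in $B_{qs}(n,m)$ or $B_{qc}(n,m)$. Concretely, the $*$-homomorphism $\pi$ there satisfies $\pi(u_{ij}\otimes v_{k\ell})=U_{ij}\otimes V_{k\ell}$ with $U_{ij}\in\cB((\bC^n)^{\otimes r})$ and $V_{k\ell}\in\cB((\bC^m)^{\otimes r})$, and $\psi$ belongs to $(\bC^n)^{\otimes r}\otimes(\bC^m)^{\otimes r}$. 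Everything else --- closedness of $B_{loc}(n,m)$, the estimate $|\la\alpha,y\otimes z\ra|\le 1/\sqrt n$, and the reduction from the $UC$-level to the $B$-level --- is immediate.
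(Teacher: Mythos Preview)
Your proof is correct and follows essentially the same route as the paper: both take the maximally entangled $\alpha=\frac{1}{\sqrt n}\sum_i e_i\otimes e_i$, invoke Theorem~\ref{embezzlement} to produce the corresponding matrix $Z$, use Theorem~\ref{localembezzlement} to exclude $Z$ from $B_{loc}(n,m)$, then pass to a finite-dimensional approximant $X_{r_0}\in B_q(n,m)\setminus B_{loc}(n,m)$ via closedness of $B_{loc}$, and handle the inclusion $UC_{loc}\subseteq UC_q$ by Carath\'eodory. The only cosmetic difference is that the paper rules out condition~(3) of Theorem~\ref{localembezzlement} by exhibiting a bilinear form dual to $\alpha$ of norm $\sqrt n$, whereas you rule out condition~(2) directly via the estimate $|\la\alpha,y\otimes z\ra|\le 1/\sqrt n$; these are the same computation.
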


\begin{proof}
As usual, we may assume that $n \leq m$.  Let $X$ be the matrix obtained from the state $s \in \cS(\cV_n \otimes_{\min} \cV_m)$ in Proposition \ref{embezzledstatezeros}.  By Theorem \ref{localembezzlement}, $X \in B_{loc}(n,m)$ if and only if $\left\| \frac{1}{\sqrt{n}} \sum_{i=1}^n e_i \otimes e_i \right\|_{\bC^n \otimes_{\pi} \bC^m}=1$.  To see that this is not the case, Let $B:\bC^n \times \bC^m \to \bC$ be the bilinear form given by $$B(v,w)=\sum_{k=1}^n v_k w_k.$$
By Holder's inequality, $\|B\| \leq 1$ when regarded as a bilinear form from $\bC^n \times \bC^m$ into $\bC$.  It follows (see \cite[p. 23]{ryan}) that $\|X\|_{\pi} \geq \left| \sum_{i=1}^n \frac{1}{\sqrt{n}} B(e_i,e_i) \right|=\sqrt{n}$.  Hence, $X \not\in B_{loc}(n,m)$, which shows that $UC_{loc}(n,m) \neq UC_{qa}(n,m)$.  Since $X$ can be approximated by elements in $B_q(n,m)$ and $B_{loc}(n,m)$ is closed, we see that $B_{loc}(n,m) \neq B_q(n,m)$, so that $UC_{loc}(n,m) \neq UC_q(n,m)$.

Finally, we will show that $UC_{loc}(n,m) \subset UC_q(n,m)$.  We first note that $UC_{loc}(n,m) \subseteq \bC^{(2n^2+1)(2m^2+1)}$ is the closed convex hull of states arising from evaluation functionals on commutative $C^{\ast}$-algebras.  As in the proof of Theorem \ref{correlationnorms}, the resulting correlation in $UC_{loc}(n,m)$ will be of the form $$(\delta_z(X)\delta_z(Y))_{X \in \mathfrak{B}_n(U), \, Y \in \mathfrak{B}_m(V)},$$
where $U \in M_n(\bofh)$ and $V \in M_m(\bofh)$ are unitary and $K$ is a compact Hausdorff space such that $z \in K$ and $C^*(\mathfrak{B}_n(U) \cup \mathfrak{B}_m(V)) \simeq C(K)$.  We saw in the proof of Theorem \ref{correlationnorms} that $(\delta_z(X)\delta_z(Y))_{X,Y} \in UC_q(n,m)$.  By a theorem of Caratheodory, every element of $UC_{loc}(n,m)$ can be written as a finite convex combination of at most $2(2n^2+1)(2m^2+1)+1$ states of the form $(\delta_z(X)\delta_z(Y))_{X,Y}$.  Since $UC_q(n,m)$ is convex, it follows that $UC_{loc}(n,m) \subseteq UC_q(n,m)$, which completes the proof.
\end{proof}

\begin{cor}
For all $n,m \geq 2$, $B_{qc}(n,m) \neq B_{qmax}(n,m)$.  In particular, $\cV_n \otimes_c \cV_m \neq \cV_n \otimes_{\max} \cV_m$.  In fact, the identity map $\id:\cV_n \otimes_c \cV_m \to \cV_n \otimes_{\max} \cV_m$ fails to be $1$-positive.
\end{cor}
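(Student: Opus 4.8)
The plan is to leverage two results already in hand: Theorem~\ref{qmax}, which identifies $B_{qmax}(n,m)$ with the entire operator-norm unit ball of $M_{nm}$, and Proposition~\ref{embezzledstatezeros}, which shows that a state on $\cV_n \otimes_c \cV_m$ whose ``first column'' is a diagonal unit vector $\sum d_i e_i \otimes e_i$ is completely pinned down on all of the coordinates $u_{ij}\otimes v_{k\ell}$. The idea is to write down an operator-norm contraction $X'$ in $M_n \otimes M_m$ that agrees, on its first column, with the embezzlement matrix of Theorem~\ref{embezzlement} but carries one additional nonzero coordinate: then $X' \in B_{qmax}(n,m)$ by Theorem~\ref{qmax}, while $X' \notin B_{qc}(n,m)$ by Proposition~\ref{embezzledstatezeros}.

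By symmetry of the commuting and maximal tensor products I may assume $n \le m$. Let $\alpha = \frac{1}{\sqrt n}\sum_{i=1}^n e_i \otimes e_i \in \bC^n \otimes \bC^m$, a unit vector, and let
$$X = \frac{1}{\sqrt n}\sum_{i=1}^n E_{i1} \otimes E_{i1} \in M_n \otimes M_m$$
be the matrix produced by Theorem~\ref{embezzlement} for $\alpha$; its only nonzero column is the $(1,1)$-column, which equals $\alpha$. I would then take
$$X' = X + E_{11} \otimes E_{22}.$$
Setting $\xi = e_1 \otimes e_1$ and $\zeta = e_1 \otimes e_2$, one has $X = \alpha \xi^{*}$ and $E_{11}\otimes E_{22} = \zeta \zeta^{*}$; since $\langle \xi, \zeta \rangle = 0$ and $\langle \alpha, \zeta\rangle = 0$, these two summands are partial isometries with mutually orthogonal initial and final subspaces, so $X'$ is a partial isometry and $\|X'\| \le 1$. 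Hence $X' \in B_{qmax}(n,m)$ by Theorem~\ref{qmax}. Note also that $E_{22}e_1 = 0$, so adjoining $E_{11}\otimes E_{22}$ does not disturb the $(1,1)$-column; the $(1,1)$-column of $X'$ is still $\alpha$, while the coefficient of $E_{11}\otimes E_{22}$ in $X'$ equals $1$.

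Now suppose toward a contradiction that $X' \in B_{qc}(n,m)$, so there is a state $s$ on $\cV_n \otimes_c \cV_m$ with $(s(u_{ij}\otimes v_{k\ell})) = X'$. Reading off the $(1,1)$-column gives $s(u_{i1}\otimes v_{i1}) = \frac{1}{\sqrt n}$ for $1 \le i \le n$ and $s(u_{j1}\otimes v_{k1}) = 0$ for $j \ne k$, which is exactly the hypothesis of Proposition~\ref{embezzledstatezeros} with $d_i = \frac{1}{\sqrt n}$. That proposition forces $s(u_{ij}\otimes v_{k\ell}) = 0$ unless $(i,j,k,\ell) = (i,1,i,1)$; in particular $s(u_{11}\otimes v_{22}) = 0$, contradicting $s(u_{11}\otimes v_{22}) = 1$. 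Thus $X' \in B_{qmax}(n,m) \setminus B_{qc}(n,m)$, so $B_{qc}(n,m) \ne B_{qmax}(n,m)$. Since these two sets are the images of the state spaces of $\cV_n \otimes_c \cV_m$ and $\cV_n \otimes_{\max} \cV_m$ under $s \mapsto (s(u_{ij}\otimes v_{k\ell}))$, these operator systems cannot be equal. Finally, $c \le \max$ makes $\id\colon \cV_n \otimes_{\max}\cV_m \to \cV_n \otimes_c \cV_m$ completely positive; so if $\id\colon \cV_n \otimes_c \cV_m \to \cV_n \otimes_{\max}\cV_m$ were $1$-positive, the first-level positive cones of the two operator systems would coincide, and since their Archimedean order units agree ($1 \otimes 1$) their state spaces would coincide, forcing $B_{qc}(n,m) = B_{qmax}(n,m)$ — a contradiction.

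The only step with any genuine content is the check that $\|X'\| \le 1$ together with the observation that the added coordinate $E_{11}\otimes E_{22}$ leaves the $(1,1)$-column intact, which is precisely what allows Proposition~\ref{embezzledstatezeros} to be invoked verbatim; I expect this small amount of bookkeeping, rather than any conceptual obstacle, to be the only delicate point.
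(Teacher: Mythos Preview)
Your proof is correct and takes a different, more explicit route than the paper. The paper argues by comparing extreme points: by Theorem~\ref{qmax} the extreme points of $B_{qmax}(n,m)$ are exactly the unitaries in $M_{nm}$, while Corollary~\ref{extreme} (which in turn rests on Proposition~\ref{embezzledstatezeros} via Corollary~\ref{uniqueembezzlement}) exhibits a proper contraction that is extreme in $B_{qc}(n,m)$; hence the two balls cannot coincide. You instead use Proposition~\ref{embezzledstatezeros} directly to manufacture a concrete witness $X'\in B_{qmax}(n,m)\setminus B_{qc}(n,m)$, by adding a single rank-one term $E_{11}\otimes E_{22}$, orthogonal on both sides, to the embezzlement matrix $X$. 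Both arguments hinge on the same rigidity phenomenon; the paper's version is more structural, while yours has the advantage of producing an explicit separating element and of bypassing Corollaries~\ref{uniqueembezzlement} and~\ref{extreme} entirely. Your final paragraph also spells out the passage from $B_{qc}(n,m)\ne B_{qmax}(n,m)$ to the failure of $1$-positivity more carefully than the paper does.
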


\begin{proof}
The extreme points of $B_{qmax}(n,m)$ are the extreme points of the unit ball of $M_n \otimes M_m$ in the operator norm, which is just the set of unitaries in $M_{nm}$.  By Corollary \ref{extreme}, there are proper contractions in $B_{qc}(n,m)$ that are extreme in $B_{qc}(n,m)$.  Therefore, $B_{qc}(n,m) \neq B_{qmax}(n,m)$.  This shows that $\id:\cV_n \otimes_c \cV_m \to \cV_n \otimes_{\max} \cV_m$ fails to be $1$-positive.
\end{proof}

\begin{cor}
None of the norms $\| \cdot \|_{loc}$, $\| \cdot \|_{qa}$ or $\| \cdot \|_{qc}$ are unitarily invariant.
\end{cor}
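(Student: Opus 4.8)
The plan is to reduce the statement to a single, standard fact from matrix analysis: a unitarily invariant norm $\|\cdot\|_\sigma$ on $M_{nm}$ that dominates the operator norm $\|\cdot\|$ and assigns norm $1$ to the identity must coincide with $\|\cdot\|$. Granting this, it suffices to check that each of $\|\cdot\|_{loc}$, $\|\cdot\|_{qa}$, $\|\cdot\|_{qc}$ satisfies the two hypotheses but is \emph{not} equal to $\|\cdot\|$; then none of them can be unitarily invariant.

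For the two hypotheses: $\|\cdot\| \leq \|\cdot\|_t$ on $M_{nm}$ is Theorem \ref{crossnorm}, and $\|I_{nm}\|_t = 1$ because $I_n \otimes I_m \in B_{loc}(n,m) \subseteq B_t(n,m)$ (every $X \otimes Y$ with $\|X\|,\|Y\| \leq 1$ lies in $B_{loc}(n,m)$, as in the proof of Theorem \ref{correlationnorms}), which gives $\|I_{nm}\|_t \leq 1$, while $\|I_{nm}\| = 1$ gives the reverse. That $\|\cdot\|_t \neq \|\cdot\|$ for each of the three values of $t$ follows from the preceding corollary together with Theorem \ref{qmax}: we have $B_{qc}(n,m) \subseteq B_{qmax}(n,m)$ with equality failing, and $B_{qmax}(n,m)$ is the operator-norm unit ball, so there is $Y \in M_{nm}$ with $\|Y\| \leq 1 < \|Y\|_{qc}$; then $\|Y\|_{qc} \leq \|Y\|_{qa} \leq \|Y\|_{loc}$ because $B_{loc}(n,m) \subseteq B_{qa}(n,m) \subseteq B_{qc}(n,m)$. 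Hence each $\|\cdot\|_t$ is strictly larger than $\|\cdot\|$ at $Y$.

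It remains to prove the reduction. Assuming $\|\cdot\|_\sigma$ is unitarily invariant, let $A \in M_{nm}$ with $\|A\| \leq 1$, and use the singular value decomposition to write $A = UDV$ with $U,V$ unitary and $D = \mathrm{diag}(s_1,\dots,s_{nm})$, $0 \leq s_i \leq 1$. Since $[-1,1]^{nm}$ is the convex hull of $\{-1,1\}^{nm}$, the diagonal $D$ is a convex combination of diagonal matrices with entries $\pm 1$; each of these is unitary, hence has $\|\cdot\|_\sigma$-norm equal to $\|I_{nm}\|_\sigma = 1$. The triangle inequality then yields $\|A\|_\sigma = \|D\|_\sigma \leq 1$, so $\|\cdot\|_\sigma \leq \|\cdot\|$ and therefore $\|\cdot\|_\sigma = \|\cdot\|$. (Equivalently, one may invoke von Neumann's description of unitarily invariant norms via symmetric gauge functions $\Phi$: the normalization $\Phi(1,\dots,1)=1$, combined with the symmetry of $\Phi$, forces $\Phi \leq \|\cdot\|_\infty$ on the cube, i.e.\ $\|\cdot\|_\sigma \leq \|\cdot\|$.) Applying this to $\|\cdot\|_t$ would give $\|\cdot\|_t = \|\cdot\| = \|\cdot\|_{qmax}$, contradicting the previous paragraph. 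I expect no genuine obstacle here: the only substantive ingredient is the classical reduction lemma, and everything else is bookkeeping with the already-established facts $\|\cdot\| \leq \|\cdot\|_t$, $\|I_{nm}\|_t = 1$, and $\|\cdot\|_t \neq \|\cdot\|_{qmax}$.
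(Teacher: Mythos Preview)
Your proof is correct. The core idea coincides with the paper's: both hinge on the observation that unitary invariance together with $\|I_{nm}\|_t = 1$ would force every unitary in $M_{nm}$ to have $\|\cdot\|_t$-norm $1$, which contradicts $B_{qc}(n,m) \subsetneq B_{qmax}(n,m)$. The paper simply picks a unitary $W \notin B_{qc}(n,m)$ and notes $\|I_{nm}W\|_t = \|W\|_t > 1$ while $\|I_{nm}\|_t = 1$, finishing in one line. You instead package the observation into the stronger lemma that any unitarily invariant norm with $\|I\|=1$ dominating $\|\cdot\|$ must \emph{equal} $\|\cdot\|$; this is true and your SVD/convex-hull argument is clean, but it proves more than is needed here. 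Either route works; the paper's is shorter, while yours isolates a reusable general fact.
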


\begin{proof}
There is a unitary $W \in M_n \otimes M_m$ with $W \not\in B_{qc}(n,m)$; otherwise, we would have $B_{qmax}(n,m) \subseteq B_{qc}(n,m)$, since $B_{qmax}(n,m)$ is the closed convex hull of the unitaries in $M_n \otimes M_m$.  Note that $I_{nm}=I_n \otimes I_m \in B_{loc}(n,m)$ by Theorem \ref{crossnorm}.  However, $I_{nm}W=W \not\in B_{qc}(n,m)$, so that $\|I_{nm}W\|_t>1$ for $t \in \{loc,qa,qc\}$.  Hence, $\| \cdot \|_{loc}$, $\| \cdot \|_{qa}$ and $\| \cdot \|_{qc}$ are not unitarily invariant.
\end{proof}

\end{document}